\documentclass[abstract=no]{scrartcl} 
\usepackage[margin=3.8cm]{geometry}

\usepackage{amsmath}
\usepackage{bbm}
\usepackage{latexsym}
\usepackage[english]{babel}
\usepackage{amsfonts}
\usepackage[T1]{fontenc}
\usepackage[utf8]{inputenc}
\usepackage{amsthm}
\usepackage{amssymb}
\usepackage{dsfont}
\usepackage{version}
\usepackage{caption}
\usepackage[backend=bibtex,style=alphabetic,hyperref=true]{biblatex}
\usepackage[psamsfonts]{eucal}
\usepackage{stmaryrd}
\usepackage{url}
\usepackage{hyperref}
\usepackage{color}
\usepackage{blindtext}
\usepackage{tikz}
\usetikzlibrary{arrows,matrix,positioning}
\tikzstyle{dmatrix}=[matrix of math nodes,row sep=2.5em, column sep=2.5em,
text height=1.5ex, text depth=0.25ex] 
\usepackage{mathtools}
\usepackage{microtype}
\usepackage{graphicx}
\usepackage{lipsum}
\usepackage{float}
\usepackage{csquotes}
\usepackage{empheq}
\usepackage{enumitem}
\DeclareSymbolFont{epsilon}{OML}{ntxmi}{m}{it}
\DeclareMathSymbol{\epsilon}{\mathord}{epsilon}{"0F}

\theoremstyle{plain}
\newtheorem{theorem}{Theorem}[section]
\newtheorem{lemma}[theorem]{Lemma}
\newtheorem{prop}[theorem]{Proposition}
\newtheorem{cor}[theorem]{Corollary}

\newtheorem{prop/Def}[theorem]{Propsition/Definition}
\newtheorem{theorem/Def}[theorem]{Theorem/Definition}
\theoremstyle{definition}

\newtheorem{question}[theorem]{Question}
\newtheorem{Def}[theorem]{Definition}
\newtheorem{rem}[theorem]{Remark}
\newtheorem{exa}[theorem]{Example}

\def \Q {{\mathbb Q}}
\def \R {{\mathbb R}}
\def \N {{\mathbb N}}
\def \Z {{\mathbb Z}}

\def \D {{\pmb{D}}}

\def \O {{\mathcal O}}

\def \P {{\mathbb P}}

\def \T {{\mathbb T}}

\def \X {{ X_{\Sigma}}}

\def \p {{\tilde{\phi}}}


\def \f11 {{\frac{\log(u_v\bar{u}_v)\log(\nu_v\bar{\nu}_v)}{\log(u_v\bar{u}_v)+\log(\nu_v\bar{\nu}_v)}}}

\def \div {{\operatorname{div}}}

\def \vol {{ \operatorname{vol}}}

\def \Im {{ \operatorname{Im}}}

\def \relint {{ \operatorname{relint}}}

\def \max {{ \operatorname{max}}}

\def \conv {{ \operatorname{conv}}}
\def \convhull {{ \operatorname{convhull}}}
\def \MV {{ \operatorname{MV}}}

\def \Hom {{ \operatorname{Hom}}}

\def \We {{ \operatorname{We}}}

\def \Cl {{ \operatorname{Cl}}}
\def \Pic {{ \operatorname{Pic}}}
\def \Cox {{ \operatorname{Cox}}}

\def \prim {{ \operatorname{prim}}}

\def \timess {{ \operatorname{-times}}}

\def \deg {{ \operatorname{deg}}}

\def \BBig {{ \operatorname{Big}}}
\def \Nef {{ \operatorname{Nef}}}

\def \pr {{ \operatorname{pr}}}

\def \gcd {{ \operatorname{gcd}}}

\def \cone {{ \operatorname{cone}}}


\renewbibmacro{in:}{}

\usepackage{tgpagella}
\usepackage{tgheros}
\usepackage{eulervm}
\usepackage{tikz}
\usepackage{etoolbox}
\usetikzlibrary{arrows,matrix,positioning}
\tikzstyle{dmatrix}=[matrix of math nodes,row sep=2.5em, column sep=2.5em,
text height=1.5ex, text depth=0.25ex] 

\numberwithin{equation}{section}

\setparsizes{1em}{0.15\baselineskip plus .25\baselineskip}{1em plus 1fil}
\bibliography{bibliography_update}

\title{Intersection theory of toric $b$-divisors in toric varieties}
\author{Ana Mar\'ia Botero}
\date{}
\setcounter{section}{-1}
\begin{document}

\maketitle

{\small{\begin{abstract}
We introduce toric $b$-divisors on complete smooth toric varieties
and a notion of integrability of such divisors. We show that under some
positivity assumptions toric $b$-divisors are integrable and that their
degree is given as the volume of a convex set. Moreover, we show that the
dimension of the space of global sections of a nef toric $b$-divisor
is equal to the number of lattice points in this convex set and we give
a Hilbert--Samuel type formula for its asymptotic growth. This generalizes
classical results for classical toric divisors on toric varieties. Finally, we relate convex bodies associated to $b$-divisors with Newton--Okounkov bodies. The main motivation for studying toric $b$-divisors is that they locally encode the singularities of the invariant metric on an automorphic line bundle over a toroidal compactification of a mixed Shimura variety of non-compact type.  
\end{abstract}}}

\section{Introduction}
One of the main goals of arithmetic geometry is to be able to \enquote{measure} the arithmetic complexity of an arithmetic object. This has led to the development of the theory of heights, numerical invariants defined via arithmetic intersection theory. Arithmetic intersection theory, which is also known as Arakelov theory, is a way to study varieties over rings of integers of number fields by putting smooth hermitian metrics on holomorphic vector bundles over the complex points of the variety. Its foundations go back to S.~J.~Arakelov in \cite{arakelov1,arakelov2}, where he developed the theory for arithmetic surfaces. This theory was generalized to higher dimensional arithmetic varieties by H. Gillet and C. Soul\'e in \cite{gillet-soule}. Later on, J.~I.~Burgos, J.~Kramer and U.~K\"uhn managed to weaken the smoothness condition of the metrics so as to allow so called log-singular metrics (\cite{BKK2,BKK1}). The key result used in order to develop the theory with this kind of singular metrics is the following theorem by D. Mumford \cite{MUM}:
\begin{theorem}
Every automorphic
line bundle on a pure open Shimura variety, equipped with
an invariant smooth metric, can be uniquely extended as a line
bundle on a toroidal compactification of the variety, in such a way
that the metric acquires only logarithmic singularities.
\end{theorem}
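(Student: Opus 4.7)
The plan is to reduce the global statement to a purely local analytic question near the boundary of the toroidal compactification and then to verify polynomial growth estimates of the invariant metric. Let $S = \Gamma\backslash D$ be the open Shimura variety, where $D$ is a Hermitian symmetric domain and $\Gamma$ an arithmetic subgroup acting on $D$, and let $\overline{S}$ be a toroidal compactification associated to a $\Gamma$-admissible rational polyhedral cone decomposition of the conical complex attached to the rational boundary components. The construction of $\overline{S}$ (Ash--Mumford--Rapoport--Tai) guarantees that, \emph{locally analytically} near any boundary stratum, the pair $(\overline{S}, \overline{S}\setminus S)$ is modeled on a toric pair $(X_\sigma, \partial X_\sigma)$, so a neighborhood of a boundary point takes the form $(\Delta^*)^r \times \Delta^{n-r}$ after passing to an étale chart.

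First I would unwind the standard description of an automorphic line bundle $L$ in terms of a representation of the stabilizer of a maximal compact subgroup, together with its canonical $G(\R)$-invariant Hermitian metric $h$; this presents $h$ on $D$ via explicit special functions (essentially involving the imaginary parts of the Harish-Chandra coordinates). The next step is to push this description through the Borel embedding and through the unipotent radical of the parabolic stabilizing the boundary component, following the standard \emph{reduction to the rational boundary component}. After this reduction, in the local toric coordinates $z = (z_1, \dots, z_n)$ where $z_i = e^{2\pi i \tau_i}$ for $i \le r$, the coordinates on $D$ blow up like $\tau_i \sim \frac{1}{2\pi i}\log z_i$, and one obtains a local frame $s$ of $L$ whose norm squared has the schematic shape
\[
\|s(z)\|_h^2 \; \asymp \; P\!\bigl(-\log|z_1|,\dots,-\log|z_r|\bigr),
\]
where $P$ is a (generically non-constant) polynomial determined by the weights of the representation defining $L$.

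From here the desired extension is obtained by declaring $s$ itself to be a local frame of a line bundle $\overline{L}$ on $\overline{S}$: the very fact that $\|s\|_h^2$ is comparable to a polynomial in $\log|z_i|^{-1}$ (and not, say, in $|z_i|^{-1}$) says precisely that $\log\|s\|_h$ has logarithmic singularities along the boundary divisor, which is the desired pre-log-log growth condition. One must then check that these local extensions glue, which amounts to verifying that transition functions between charts associated to adjacent cones of the fan are holomorphic across the boundary; this in turn reduces to the compatibility of the cone decomposition with the $\Gamma$-action, exactly as in the construction of $\overline{S}$ itself. Uniqueness is then obtained from a Riemann-extension argument: any two such extensions differ by an invertible meromorphic function on $\overline{S}$ that is bounded (both from above and away from zero) in the logarithmic sense across a codimension-one analytic subset, hence is holomorphic and nonvanishing.

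The main obstacle is the analytic estimate in the second paragraph, namely producing the polynomial comparison $\|s\|_h^2 \asymp P(-\log|z_i|)$ uniformly on a neighborhood of each boundary stratum. The difficulty here is genuinely geometric rather than formal: one has to combine Harish-Chandra's realization with the structure of the unipotent radical of the rational parabolic, and then control the behavior of the invariant metric under the exponential map sending the upper half-space coordinates to the toroidal coordinates, uniformly as one approaches deeper boundary strata. In the Siegel case this reduces to explicit computations with the standard Petersson metric, and the general case proceeds by the same mechanism applied to each irreducible factor of the Hermitian symmetric domain; once these estimates are in hand, the remainder of the argument is formal.
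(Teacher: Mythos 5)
This theorem is not proved in the paper: it is quoted verbatim from Mumford and cited as \cite{MUM}, serving only as background motivation for what follows (the paper's actual contribution begins with the observation, due to \cite{BKK}, that the analogous statement \emph{fails} in the mixed Shimura setting, which is what motivates the introduction of $b$-divisors). So there is no ``paper's own proof'' to compare against. That said, your outline is a fair reconstruction of Mumford's original strategy. You correctly isolate the decisive analytic step, namely the estimate $\|s\|_h^2 \asymp P(-\log|z_1|,\dots,-\log|z_r|)$ in toroidal coordinates near a boundary stratum; this is exactly what Mumford's notion of a ``good'' metric captures, and it is obtained, as you say, by passing through the Harish-Chandra realization and the structure theory of the rational parabolic stabilizing the boundary component. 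The gluing and uniqueness steps are also in the right spirit, though Mumford packages uniqueness via the characterization of good metrics and the fact that two good extensions of the same line bundle with metric differ by a bounded invertible function, which extends by normality.

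One caution worth registering, since it is precisely the point of the paper you are reading: the polynomial-in-$\log$ estimate in your second paragraph is not a formality that follows from the toroidal local model alone. It uses in an essential way that the Shimura variety is \emph{pure}, i.e.~that the Hermitian symmetric domain has no unipotent (abelian-scheme) directions. In the mixed case the same reduction produces growth of the form $\exp(\text{rational function of } \log|z_i|)$ rather than a polynomial, which is exactly the ``worse than logarithmic'' behavior recorded in \cite{BKK} and which the $b$-divisor formalism of this paper is designed to handle. So if you were to try to run your argument without the purity hypothesis, step 5 would genuinely fail, not merely become harder to verify.
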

Applications of this theory have been found in the study of arithmetic cycles in pure Shimura varieties of non compact type such as in Kudla's programme that seeks to relate generating series of intersection numbers of arithmetic cycles to the Fourier coefficients of modular forms.  

A natural question is whether Mumford's result is valid also in the mixed Shimura setting. As was noticed by the authors in \cite{BKK}, Mumford's theorem is no longer valid in this situation: the invariant metric on an automorphic line bundle over a mixed Shimura variety of non-compact type acquires worse than logarithmic singularities over a toroidal compactification. As a consequence of the appearance of this new type of singularities, a \enquote{naive} extension of the metric along the boundary is not good as it depends on the choice of a compactification and does not satisfy Chern--Weil theory.

We address the geometric side of the problem in this and a number of subsequent articles. We propose a method for dealing with these new singularity types via convex geometric methods in the case of toroidal compactifications of universal abelian schemes and fibre products of those. 
The idea is to take \emph{all} possible toroidal compactifications into account and encode the singularity type in a \emph{toroidal $b$-divisor} (to be defined in a subsequent article). The correction terms needed to define a meaningful intersection theory should correspond to mixed degrees of such $b$-divisors.

In this article we develop an intersection theory of toric $b$-divisors on toric varieties. This sets the groundwork for the more general intersection theory of toroidal $b$-divisors on toroidal varieties. Moreover, we make a connection between convex bodies associated to toric $b$-divisors and Newton--Okounkov bodies induced from so called algebras of almost integral type and we give some applications of this identification. This gives an alternative insight into the birational geometry of toric varieties.

The outline of the paper is as follows. We start by setting some standard toric notation in Section \ref{notations}. Let $N \simeq \Z^n$ be a lattice and let $M = \Hom(N, \Z)$ be its dual lattice. Let $\X$ be the smooth and complete toric variety of dimension $n$ associated to a smooth and complete rational polyhedral fan $\Sigma \subseteq N_{\R} \, (\coloneqq N \otimes \R)$. Here, $N = M^{\vee}$ is the dual of the lattice $M$ of characters of the torus $\T \subseteq \X$. In Section \ref{section2} we define the central objects of study which are toric $b$-divisors on $\X$. We can think of toric $b$-divisors as tuples of toric $\Q$-Weil divisors, indexed by smooth subdivisions of $\Sigma$, compatible under push-forward.  We denote $b$-divisors in boldface notation $\D$ to distinguish them from classical divisors $D$. A toric $b$-divisor is said to be nef if for a cofinal subset of subdivisions, the corresponding constituents are nef. We characterize nef toric $b$-divisors by conical, $\Q$-concave, rational functions on the $\Q$-vector space $N_{\Q}$. In Section \ref{section3} we define an integrability notion of a collection of toric $b$-divisors. Our first result concerns integrability of a collection of nef toric $b$-divisors:
\begin{theorem}
Let $\D_1, \ldots,\D_n$ be a collection of nef toric $b$-divisors on $X_{\Sigma}$ and let $\phi_i \colon N_{\mathbb{Q}} \to \mathbb{Q}$ be the corresponding $\mathbb{Q}$-concave functions for $i = 1, \dotsc, n$.
     
      Then the functions $\phi_i$ extend to concave functions $\overline{\phi}_i \colon N_{\mathbb{R}} \to \mathbb{R}$.
      Moreover, the \emph{mixed degree} $\D_1\dotsm \D_n$ which is defined as the limit (in the sense of nets)
      \[
      \D_1\dotsm \D_n \coloneqq \lim_{\Sigma'} D_{1_{\Sigma'}} \dotsm D_{n_{\Sigma'}}
      \]
      exists, is finite, and is given by the mixed volume of the stability sets $\Delta_{\phi_i}$ of the concave functions $\overline{\phi}_i$, i.e.~we have 
      \[
      \D_1\dotsm \D_n = \MV\left(\Delta_{\overline{\phi}_1}, \dotsc, \Delta_{\overline{\phi}_n}\right).
      \]
     In particular, the \emph{degree} $\D^n$ of a nef toric $b$-divisor $\D$  with corresponding concave function $\overline{\phi}$ exists, is finite, and is given by 
      \[
      \D^n = n!\,\vol\left(\Delta_{\overline{\phi}}\right).
      \]

    \end{theorem}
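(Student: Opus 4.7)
The plan is to establish the extension of $\phi_i$ to $\overline{\phi}_i$ directly, reduce the mixed degree at each level of the directed net of smooth subdivisions to the classical toric intersection formula, and pass to the limit using Hausdorff continuity of mixed volume. For the extension, I would observe that a conical $\Q$-concave function $\phi_i \colon N_\Q \to \Q$ admits a canonical extension $\overline{\phi}_i$ to $N_\R$ given by the infimum of all rational affine majorants, or equivalently by continuity along rational directions combined with conicity; the resulting function is unique, conical, and concave, which is the standard extension lemma for conical concave functions defined on a dense subspace.

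For the main formula, fix a smooth subdivision $\Sigma'$ of $\Sigma$. Then the constituent $D_{i,\Sigma'}$ of the nef toric $b$-divisor $\D_i$ is a nef toric Cartier divisor on $X_{\Sigma'}$, whose piecewise-linear support function $\phi_{i,\Sigma'}$ is linear on each cone of $\Sigma'$ and agrees with $\phi_i$ on the rays. The classical toric intersection formula for nef divisors yields
\[
D_{1,\Sigma'}\cdots D_{n,\Sigma'} = \MV\bigl(\Delta_{\phi_{1,\Sigma'}}, \ldots, \Delta_{\phi_{n,\Sigma'}}\bigr).
\]
Because $\overline{\phi}_i$ is concave and matches $\phi_{i,\Sigma'}$ on the rays of $\Sigma'$, one has $\phi_{i,\Sigma'} \le \overline{\phi}_i$ pointwise (the linear pieces are chords under the concave function), which translates to the reverse inclusion of stability sets $\Delta_{\overline{\phi}_i} \subseteq \Delta_{\phi_{i,\Sigma'}}$. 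Moreover, refining $\Sigma'$ can only raise $\phi_{i,\Sigma'}$, since the new PL function is pinned to $\overline{\phi}_i$ on additional rays and therefore lies above the old chord by concavity. Hence the net $\{\phi_{i,\Sigma'}\}$ is monotone increasing and the polytopes $\{\Delta_{\phi_{i,\Sigma'}}\}$ are monotone decreasing along refinement. The crucial convergence step is to show $\sup_{\Sigma'} \phi_{i,\Sigma'} = \overline{\phi}_i$, equivalently $\bigcap_{\Sigma'} \Delta_{\phi_{i,\Sigma'}} = \Delta_{\overline{\phi}_i}$: on rational points this is immediate because every $v \in N_\Q$ is proportional to some ray of a sufficiently fine subdivision, and on irrational points in the interior of the effective domain it follows from continuity of concave functions together with rational approximation and conicity. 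For nested compact convex sets the intersection is automatically the Hausdorff limit.

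The conclusion follows by invoking continuity of mixed volume with respect to Hausdorff convergence of compact convex bodies, giving
\[
\lim_{\Sigma'} \MV\bigl(\Delta_{\phi_{1,\Sigma'}}, \ldots, \Delta_{\phi_{n,\Sigma'}}\bigr) = \MV\bigl(\Delta_{\overline{\phi}_1}, \ldots, \Delta_{\overline{\phi}_n}\bigr),
\]
which simultaneously produces existence and finiteness of the mixed degree and identifies it with the mixed volume. The degree formula $\D^n = n!\,\vol(\Delta_{\overline{\phi}})$ is the diagonal specialization combined with the normalization $\MV(K, \ldots, K) = n!\, \vol(K)$. The main obstacle is the convergence claim in the preceding paragraph: one must control the behavior of $\overline{\phi}_i$ on the boundary of its effective domain, where it can be discontinuous, and verify that this boundary behavior does not disrupt Hausdorff convergence of the (bounded) stability polytopes. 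A secondary subtlety is handling a directed net instead of a sequence, which requires cofinality arguments to reduce to a monotone sequence of subdivisions wherever a sequential argument is invoked.
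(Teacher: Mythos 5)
Your proposal is correct, but it takes a genuinely different route from the paper. The paper first establishes the self-intersection case (Theorem~\ref{volume}) by a direct Cauchy estimate: it shows that $\vol\left(P_{D_{\Sigma'}}\setminus P_{D_{\Sigma''}}\right)$ is eventually small, and concludes that $\vol\left(P_{D_{\Sigma'}}\right)\to\vol\left(\Delta_{\phi}\right)$. It then deduces the mixed case (Theorem~\ref{amelie}) from the diagonal case via the polarization identity $n!\,X_1\dotsm X_n = \sum_{I}(-1)^{n-\#I}\left(\sum_{i\in I}X_i\right)^n$ (Lemma~\ref{variables}), applied to the nef sums $\D_I=\sum_{i\in I}\D_i$, plus a triangle-inequality estimate to propagate Cauchy-ness to the mixed products. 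You instead prove directly that $P_{D_{i,\Sigma'}}$ converges to $\Delta_{\overline{\phi}_i}$ in the Hausdorff metric (as an eventually nested net of compact convex sets with intersection $\Delta_{\overline{\phi}_i}$), and then invoke the continuity of the mixed volume functional with respect to Hausdorff convergence. Your route is shorter and more modular, at the cost of outsourcing the convergence statement to the (standard but non-trivial) Hausdorff continuity of $\MV$; the paper's argument is more self-contained, resting only on polarization and scalar Cauchy estimates. Two remarks on your text: the concern you flag as the ``main obstacle'' — boundary behavior of $\overline{\phi}_i$ on its effective domain — is in fact a non-issue, since the fan $\Sigma$ is complete, hence $\overline{\phi}_i$ is finite-valued and locally Lipschitz (by Lemma~\ref{lemma1}) on all of $N_{\R}$, with no boundary of the effective domain to worry about. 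The actual point to be careful about, which you do flag as secondary, is the passage from eventual monotonicity of the polytope net (valid only past a refinement threshold, cf.\ Lemma~\ref{con1} and Remark~\ref{aaa}) and the need for a common cofinal subset of $R(\Sigma)$ on which all $n$ incarnations $D_{i,\Sigma'}$ are simultaneously nef, so that the classical formula $D_{1,\Sigma'}\dotsm D_{n,\Sigma'}=\MV\left(P_{D_{1,\Sigma'}},\dotsc,P_{D_{n,\Sigma'}}\right)$ applies; this cofinality bookkeeping is present (implicitly) in the paper as well.
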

    In Section \ref{section4} we consider the case of smooth and complete toric surfaces. Using basic number theory we show that integrability of a toric $b$-divisor on such a surface is equivalent to the convergence of a certain series ranging over pairs of relatively prime integers. 
    In Section \ref{section5} we define the space of global sections $H^0(X_{\Sigma}, \D)$ of a toric $b$-divisor $\D$. The following is the main result of this section.
    \begin{theorem}
      Let $\D$ be a toric $b$-divisor. There is a canonically defined convex set $\Delta_{\D}$ inducing an isomorphism of finite dimensional vector spaces
       \[
H^0(X_{\Sigma}, \D) \simeq \bigoplus_{m \in M \cap \Delta_{\D}} \mathbb{C} \cdot\chi^m.
\]
Moreover, if $\D$ is nef and $\phi_{\D}$ is its associated concave function on $N_{\R}$ then $\Delta_{\D} = \Delta_{\phi_{\D}}$ and the following Hilbert--Samuel type formula holds true:
\[
\D^n = \lim_{\ell \to \infty}\frac{h^0(X_{\Sigma}, \ell\D)}{\ell^n/n!}.
\]  
\end{theorem}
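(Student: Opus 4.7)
The plan is to reduce to classical toric geometry on each smooth subdivision $\Sigma'$ of $\Sigma$ and then pass to the ``limit'' $b$-divisor by intersection. I would define
$$H^0(X_\Sigma, \D) := \bigcap_{\Sigma'} H^0(X_{\Sigma'}, D_{\Sigma'})$$
as the intersection, taken inside the common function field $\C(\T) = \C[M]_{(0)}$, over all smooth subdivisions $\Sigma'$ of $\Sigma$, with $D_{\Sigma'}$ the toric $\Q$-Weil divisor constituent of $\D$ on $\Sigma'$. The push-forward compatibility of the $b$-divisor guarantees that these subspaces of $\C[M]$ are nested: if $\Sigma''$ refines $\Sigma'$ and $f$ is a rational function with $\div(f) + D_{\Sigma''} \ge 0$, then pushing forward gives $\div(f) + D_{\Sigma'} \ge 0$, so sections get smaller as the fan refines.

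For each fixed $\Sigma'$, the classical toric description on the complete variety $X_{\Sigma'}$ yields
$$H^0(X_{\Sigma'}, D_{\Sigma'}) = \bigoplus_{m \in M \cap P_{D_{\Sigma'}}} \C \cdot \chi^m,$$
where
$$P_{D_{\Sigma'}} = \{m \in M_\R : \langle m, v_\rho\rangle \ge -\ord_\rho(D_{\Sigma'}) \text{ for all } \rho \in \Sigma'(1)\}$$
is a bounded rational polytope (completeness of $\Sigma'$ makes the recession cone trivial). Setting $\Delta_\D := \bigcap_{\Sigma'} P_{D_{\Sigma'}} \subseteq P_{D_\Sigma}$ gives a bounded convex subset of $M_\R$, and intersecting the character decompositions yields the claimed isomorphism; finite dimensionality follows from boundedness of $\Delta_\D$.

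For the second part, suppose $\D$ is nef with associated $\Q$-concave function $\phi_\D$, extended to a continuous concave function $\overline{\phi}_\D$ on $N_\R$ by Theorem 0.3. Each constituent $D_{\Sigma'}$ is nef on $X_{\Sigma'}$ and hence Cartier (since $X_{\Sigma'}$ is smooth), with support function equal to $\overline{\phi}_\D$ evaluated on the primitive generators of $\Sigma'(1)$. Thus
$$P_{D_{\Sigma'}} = \{m \in M_\R : \langle m, v_\rho\rangle \ge \overline{\phi}_\D(v_\rho) \text{ for all } \rho \in \Sigma'(1)\} \;\supseteq\; \Delta_{\overline{\phi}_\D}.$$
Since the set of rays occurring in some smooth subdivision of $\Sigma$ is dense in $N_\R$, continuity and positive homogeneity of $\overline{\phi}_\D$ upgrade the containment into the equality $\Delta_\D = \Delta_{\overline{\phi}_\D}$.

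Finally, the Hilbert--Samuel formula follows by scaling: $\Delta_{\ell \D} = \ell \Delta_{\overline{\phi}_\D}$, so $h^0(X_\Sigma, \ell \D) = \#\bigl(M \cap \ell \Delta_{\overline{\phi}_\D}\bigr)$, and the classical lattice-point asymptotic $\#(M \cap \ell \Delta)/\ell^n \to \vol(\Delta)$ for any bounded convex body $\Delta \subseteq M_\R$ combined with the identification $\D^n = n!\,\vol(\Delta_{\overline{\phi}_\D})$ from Theorem 0.3 yields the limit. The main obstacle I expect is the density step identifying $\Delta_\D$ with $\Delta_{\overline{\phi}_\D}$: one needs to exhibit a cofinal system of smooth subdivisions of $\Sigma$ whose rays accumulate to every direction in $N_\R$, and to verify (with care about sign and normalization conventions relating $\ord_\rho(D_{\Sigma'})$ to $\overline{\phi}_\D(v_\rho)$) that the combinatorial stability set is captured in the limit of finite-ray polytopes.
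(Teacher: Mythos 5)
Your proposal is correct and takes essentially the same route as the paper: both define $H^0(X_\Sigma,\D)$ as the intersection of the constituent section spaces (the paper's Definition \ref{def:globalsectionstoric} together with Lemma \ref{chupito}), both set $\Delta_\D=\bigcap_{\Sigma'}P_{D_{\Sigma'}}$ and deduce the isomorphism from the classical character decomposition on each model, and both appeal to Theorem \ref{volume} for the nef degree formula. Two small remarks. First, nefness in Definition \ref{nefdef} only asks that a \emph{cofinal} set of constituents be nef, so your phrase ``each constituent $D_{\Sigma'}$ is nef'' should be ``cofinally many constituents are nef''; since intersections over cofinal subsystems agree with intersections over the whole directed set, this causes no harm but should be stated. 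Second, your treatment of the Hilbert--Samuel limit is actually slightly cleaner than the paper's: you apply the lattice-point asymptotic $\#(M\cap\ell\Delta)/\ell^n\to\vol(\Delta)$ directly to the bounded convex set $\Delta_{\overline\phi_\D}$ (which is legitimate since bounded convex sets are Jordan measurable), whereas the paper's proof of Theorem \ref{hilbert-samuel} approximates by the polytopes $P_{D_{\Sigma'}}$ and silently interchanges $\lim_{\Sigma'}$ with $\lim_\ell$, an interchange that your argument sidesteps entirely.
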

Finally, in Section \ref{section6} we identify the convex bodies arising from toric $b$-divisors $\Delta_{\D}$ with convex Okounkov bodies $\Delta_A \subseteq \R^n$ attached to graded algebras of almost integral type $A$ defined by K. Kaveh and A.~G. Khovanskii in \cite{KK}.
\begin{theorem}
Let $\D$ be a nef toric $b$-divisor on $\X$. Then the graded algebra 
\[
A_{\D} = \bigoplus_{\ell \geq 0}H^0(\X, \ell \D)t^{\ell}
\]
 is an algebra of almost integral type. Moreover, let $\Delta_{A_{\D}} \subseteq \R^n$ be the Newton--Okounkov body attached to $A_{\D}$. Then there is an identification $M_{\R} \simeq \R^n$ such that 
\[
\Delta_{A_{\D}} = \Delta_{\D}.
\]
\end{theorem}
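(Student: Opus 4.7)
The plan is to use the theorem of Section~\ref{section5} to obtain an explicit description of $A_\D$ as a graded subalgebra of the Laurent polynomial ring $\C[M][t]$, and then to identify its Newton--Okounkov body with $\Delta_\D$ by means of a monomial valuation. Combining the Section~\ref{section5} theorem with the scaling relations $\phi_{\ell \D} = \ell \phi_\D$ and $\Delta_{\ell \D} = \ell \Delta_\D$, I obtain for every $\ell \geq 0$ the identification $H^0(\X, \ell \D) = \bigoplus_{m \in M \cap \ell \Delta_\D} \C \cdot \chi^m$, so that $A_\D$ embeds as a graded subalgebra of $\C[M][t]$.

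To verify that $A_\D$ is of almost integral type, $\Delta_\D$ must first be shown to be bounded. This can be checked directly: if $m_k \in \Delta_\D$ with $|m_k| \to \infty$, then after passing to a subsequence so that $m_k/|m_k|$ converges to a unit vector $v^*$, dividing the inequality $\langle m_k, n \rangle \geq \overline{\phi}_\D(n)$ by $|m_k|$ and using finiteness of $\overline{\phi}_\D$ on $N_\R$ (granted by the Section~\ref{section3} theorem) forces $\langle v^*, n \rangle \geq 0$ for every $n \in N_\R$, hence $v^* = 0$, a contradiction. With $\Delta_\D$ bounded I enclose it in a rational polytope $P \subset M_\R$ and note that the semigroup algebra $A_P \coloneqq \bigoplus_\ell \bigoplus_{m \in M \cap \ell P} \C \chi^m t^\ell$ is finitely generated (being the section ring of an ample toric divisor associated to $P$) and contains $A_\D$; this is precisely the condition that $A_\D$ is an algebra of almost integral type in the sense of Kaveh--Khovanskii.

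Fixing a basis of $M$---which also supplies the identification $M_\R \simeq \R^n$---I use the valuation $v$ on $\C[M] \setminus \{0\}$ sending a Laurent polynomial to its lex-leading exponent. Since each graded piece $A_{\D, \ell}$ is a direct sum of characters, the level-$\ell$ support of $A_\D$ under $v$ is exactly $M \cap \ell \Delta_\D$, so the Newton--Okounkov body reduces to
\[
\Delta_{A_\D} = \overline{\bigcup_{\ell \geq 1} \tfrac{1}{\ell}(M \cap \ell \Delta_\D)}.
\]
The inclusion $\Delta_{A_\D} \subseteq \Delta_\D$ is immediate from closedness of $\Delta_\D$, and the reverse inclusion follows from density of $\tfrac{1}{\ell} M$ in $M_\R$ combined with openness of the relative interior of $\Delta_\D$: every relative interior point is approximated by some $m/\ell$ with $m \in M \cap \ell \Delta_\D$, and boundary points follow by continuity.

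The main obstacle is the almost-integral-type step. For a general nef toric $b$-divisor, $\Delta_\D$ need not be a rational polytope---it can be any bounded convex body, possibly with curved boundary---so $A_\D$ itself need not be finitely generated. The Kaveh--Khovanskii notion of almost integral type is precisely what keeps the Newton--Okounkov theory applicable in this setting, and enclosing $A_\D$ in a finitely generated toric section ring $A_P$ hinges on the boundedness argument for $\Delta_\D$; once this is in place the remainder of the proof is a rather direct translation between semigroup data on the $M$-side and the character decomposition of each $H^0(\X, \ell\D)$.
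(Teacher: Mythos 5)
Your proposal is correct and follows essentially the same strategy as the paper: encode each graded piece $H^0(\X,\ell\D)$ by lattice points of $\ell\Delta_\D$, bound $A_\D$ inside a finitely generated graded algebra to get almost integral type, and use a monomial-type valuation to read off the Okounkov body. A few points of comparison are worth noting.

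For the almost-integral-type step you first prove boundedness of $\Delta_\D$ by hand and then enclose it in an auxiliary rational polytope $P$, realizing $A_P$ as the section ring of an ample divisor on the (possibly different, possibly singular) toric variety with normal fan $\Sigma_P$. The paper takes a shorter route: since $H^0(\X,\ell\D)\subseteq H^0(\X,\ell D_{\Sigma'})$ for every single model $\Sigma'\in R(\Sigma)$, one immediately has $A_\D\subseteq A_{D_{\Sigma'}}$, and the latter is a finitely generated section ring on a smooth complete toric variety, hence of almost integral type. (Boundedness of $\Delta_\D$ is then automatic from $\Delta_\D\subseteq P_{D_\Sigma}$, so your separate limiting argument with $\overline{\phi}_\D$ is correct but redundant.) Also, your closing line ``this is precisely the condition that $A_\D$ is an algebra of almost integral type'' elides a step: containment in a \emph{finitely generated} graded algebra is not literally Kaveh--Khovanskii's definition; you need $A_P$ to be of \emph{integral type} (or $A_\D\subseteq A_L$ for some finite-dimensional $L$). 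This does hold --- section rings of ample divisors on projective varieties are of integral type by \cite[Theorem~3.7]{KK}, or alternatively take $P$ to be a lattice cube so that $L=\Span\{\chi^m:m\in P\cap M\}$ works directly --- but a word of justification belongs there.

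For the identification of the Okounkov body, you use the lex-leading-exponent valuation attached to a basis of $M$, whereas the paper uses the flag valuation $\nu_{Y_\bullet}$ attached to a complete flag of $\T$-invariant subvarieties (and then normalizes so that $D_\Sigma|_{U_\sigma}$ is trivial). On a smooth toric variety these valuations agree up to a $\mathrm{GL}_n(\Z)$-change of coordinates, since both send $\chi^m$ to a linear image of $m$; your choice avoids the normalization step, which is a slight simplification. Your density argument for $\Delta_{A_\D}\supseteq\Delta_\D$ implicitly assumes $\Delta_\D$ has nonempty relative interior in $M_\R$ (i.e.\ that $\D$ is big). The paper's cone comparison has the same tacit assumption, so this is not a gap relative to the paper, but if you want the statement for all nef $\D$ you should either add a bigness hypothesis or treat the lower-dimensional case by restricting to the rational affine hull. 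Aside from these small points, your proof is sound.
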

As an application, in the big and nef case, we also construct a global $b$-convex body generalizing the global Okounkov body associated to a big divisor constructed in  \cite{LM} to the b-setting.\\[5mm]
\textbf{Acknowledgements} This project was supported by the IRTG 1800 on Moduli and Automorphic Forms and by the Berlin Mathematical School. I would like to thank J.~I.~Burgos for many illuminating discussions and numerous corrections and comments on earlier drafts. I also thank my PhD adivisor J.~Kramer and my PhD co-adivisor R. de Jong for helpful discussions and for suggesting this topic to me.

\section{Notation and basic facts}\label{notations}
Let $k$ be an algebraically closed field of arbitrary characteristic
and let $\T \simeq \mathbb{G}_{\text{m}}^n$ be an algebraic torus over $k$. An $n$-dimensional \emph{toric variety} is a normal $k$-variety with an open subvariety isomorphic to the torus $\T$ and such that the action of the torus on itself can be extended to a regular action on $X$.

We start by recalling some standard notation and basic facts from toric geometry. Let $N \simeq \Z^n$ be a lattice and let $M = \Hom(N, \Z)$ be its dual lattice. For any ring $R$, we denote by $N_{R}$ or $M_{R}$ the tensor product $N\otimes_{\Z}R$ or $M\otimes_{\Z}R$, respectively. A \emph{strongly convex rational polyhedral cone} $\sigma$ in $N_{\R}$ is a strongly convex polyhedral cone generated by vectors in $N$. We will use $\tau \leq \sigma$ to denote that $\tau$ is a face of $\sigma$.

A \emph{rational polyhedral fan} $\Sigma \subseteq N_{\R}$ is a collection of strongly convex rational polyhedral cones in $N_{\R}$ closed under \enquote{$\leq$} and such that every two cones in $\Sigma$ intersect along a common face. We always assume a fan $\Sigma$ to be non-degenerate, i.e.~it is not contained in any proper subspace of $N_{\R}$. We denote by $\Sigma(d)$ the set of $d$-dimensional cones in $\Sigma$. From now on, we will just say \emph{fan} and \emph{cone} even though we mean \enquote{rational polyhedral fan} and \enquote{strongly convex rational polyhedral cone}, respectively.

A fan $\Sigma$ is said to be \emph{complete} if the set $|\Sigma| \coloneqq \bigcup_{\sigma \in \Sigma}\sigma$ is the whole of $N_{\R}$. The set $|\Sigma|$ is called the \emph{support} of the fan $\Sigma$. The fan $\Sigma$ is said to be \emph{smooth} if  each cone $\sigma \in \Sigma$ is \emph{smooth}, i.e.~if $\sigma$ is spanned by part of a $\Z$-basis for $N$. A \emph{subdivision} of a fan $\Sigma \subseteq N_{\R}$ is a fan $\Sigma' \subseteq N_{\R}$ satisfying $|\Sigma| = |\Sigma'|$ and such that every cone of $\Sigma'$ is contained in a cone of $\Sigma$. 

Given a lattice $N$ of rank $n$ and a fan $\Sigma \subseteq N_{\R}$, it is a known fact that $\Sigma$ specifies a toric variety of dimension $n$, which we will denote by $\X$, with dense torus orbit $\T_N \subseteq \X$. The toric variety $\X$ is complete or smooth if and only if the corresponding fan $\Sigma$ is complete or smooth, respectively. The lattices $M$ and $N$ have natural geometric interpretations, namely $M$ is the character lattice of the torus $\T_N$ and its dual lattice $N$ is the lattice of one-parameter subgroups of the torus $\T_N$. We will denote characters by $\chi^m$ for elements $m$ in $M$. We will usually just write $\T$ for $\T_{N}$. We will denote by $\langle  \cdot _, \cdot \rangle$ the pairing between the dual lattices $M$ and $N$.  

For a given fan $\Sigma$, the cones of $\Sigma$ induce a stratification of $\X$ by torus orbits. As a particular case of the above correspondence, we get a bijective correspondence between the set $\Sigma(1)$ of rays of the fan $\Sigma$ and the set of prime toric Weil divisors. We will denote by $D_{\tau}$ the prime toric divisor associated to a ray $\tau \in \Sigma(1)$. This bijection induces a correspondence between toric $\Q$-Cartier divisors and so called  \emph{virtual support functions} which are continuous functions $\psi \colon |\Sigma| \to \R$ such that, for every cone $\sigma \in \Sigma$, there exists an $m_{\sigma} \in M_{\Q}$ with $\psi(u) = \langle m_{\sigma}, u \rangle$ for all $u \in \sigma$. Moreover, convexity properties of these functions encode positivity properties of the corresponding toric $\Q$-Cartier divisors: a function $f \colon N_{\R} \to \underline{\R} \; \left(\coloneqq \R \cup \{-\infty\}\right)$ is \emph{concave} if for all $x,y \in N_{\R}$, the following inequality 
\[
f(tx + (1-t)y) \geq tf(x) + (1-t)f(y) 
\]
is satisfied for $0 \leq t \leq 1$ and for all $x,y \in N_{\R}$. The correspondence between toric $\Q$-Cartier divisors and virtual support functions induces a bijective correspondence between concave virtual support functions and nef toric divisors.

A virtual support function $\psi \colon |\Sigma| \to \R$ is said to be \emph{
strictly concave} if it is concave and if for every $n$-dimensional cone $\sigma \in \Sigma(n)$ satisfies 
\[
\psi(u) =\langle m_{\sigma} , u \rangle \quad \text{iff} \quad u \in \sigma.
\]

The correspondence between toric $\Q$-Cartier divisors and virtual support functions induces a bijective correspondence between strictly concave virtual support functions and ample toric divisors.   

Assume that the fan $\Sigma$ is complete. A rational polytope $P \subseteq M_{\R}$ is the convex hull of finitely many elements in $M_{\Q}$.
We have that to any toric $\Q$-Weil divisor $D = \sum_{\tau \in \Sigma(1)}a_{\tau}D_{\tau}$ on the complete toric variety $\X$, one can associate the rational polytope
\[
P_D \coloneqq \left\{m \in M_{\R} \, | \, \langle m, v_{\tau} \rangle \geq -a_{\tau} \right\},
\]
where $v_{\tau}$ is the primitive vector spanning the ray $\tau$. 
If $D$ is $\Q$-Cartier, then $P_D$ encodes the global sections of the associated line bundle $\O(D)$. Indeed, in this case, we have that
\[
H^0\left(\X, \O(D)\right) \simeq \bigoplus_{m \in P_D \cap M}k \cdot \chi^{-m}.
\]
If $D$ is moreover nef, then $P_D = \conv\left(\left\{m_{\sigma}\right\}_{\sigma \in \Sigma(n)}\right) \subseteq M_{\R}$, where the $m_\sigma$ are the vectors determining the Cartier data of $D$. In this case, the degree of $D$ is given by
\[
D^n = n!\,\vol\left(P_D\right),
\]
where the operator \enquote{$\vol$} stands for the volume computed with respect to the Haar measure on $M_{\R}$ normalized so that the lattice $M$ has covolume 1. 

In terms of virtual support functions we have the following: let $f \colon N_{\R} \to \underline{\R}$ be a concave function. The \emph{Legendre--Fenchel dual} of $f$ is the function $f^{\vee} \colon M_{\R} \to \underline{\R}$ defined by the assignment 
\[
m \mapsto \text{inf}_{v \in N_{\R}}(\langle m,v \rangle -f(v)).
\]
The \emph{stability set} of $f$ is defined to be the convex set
\[
\Delta_{f} \coloneqq \text{dom}(f^{\vee}) = \{m \in M_{\R}\, |\, \langle m,v \rangle -f(v) \text { is bounded below for all }v \in N_{\R} \}.
\]
In particular, if $f =\psi_{D}$ is the concave virtual support function associated to a nef toric $\Q$-Cartier divisor $D$, then we have that $\Delta_{\psi} = P_D$ and hence 
\[
D^n = n!\vol\left(P_D\right) = n!\vol\left(\Delta_{\psi}\right).
\]
One can generalize the above fact about the top self intersection number of a nef toric $\Q$-Cartier divisor to the mixed top intersection number of a collection of toric nef $\Q$-Cartier divisors $D_1, \dotsc, D_n$. Indeed, the \emph{mixed volume} of a collection of convex sets $K_1, \dotsc, K_n$ is defined by 
\[
\MV\left(K_1, \dotsc,K_n\right) \coloneqq \sum_{j=1}^n(-1)^{n-j}\sum_{1\leq i_1<\dotsb <i_j\leq n}\vol\left(K_{i_1}+ \dotsb + K_{i_j}\right), 
\]
where the \enquote{$+$} refers to \emph{Minkowski addition} of convex sets. Then, the mixed degree of $D_1, \dotsc, D_n$ is equal to the mixed volume of the corresponding polytopes, i.e.~we have
\[
D_1\dotsm D_n = \MV\left(P_{D_1}, \dotsc ,P_{D_n}\right).
\] 
We refer to \cite{CLS} and to \cite{ful} for a more detailed introduction to toric geometry.

\section{$b$-divisors on toric varieties}\label{section2}
Throughout this article we will fix an algebraically closed field $k$, a lattice $N$ of rank $n$ and a smooth, complete fan $\Sigma \subseteq N_{\R}$. We will denote by $\X$ the corresponding smooth and complete toric variety with dense open torus $\T = \T_{N} \simeq \left(k^*\right)^n$. 

In this section we define toric $b$-divisors on the toric variety $\X$ as a tower of toric Cartier $\Q$-divisors indexed over all toric proper birational morphisms from smooth and complete toric varieties to $\X$ and satisfying some compatibility condition. Moreover, as it is usual in toric geometry where geometric objects correspond to combinatorial ones, we will show that toric $b$-divisors correspond to conical, $\Q$-valued functions on the space $N_{\Q}$.

Let $\T\text{-Ca}\left(X_{\Sigma}\right)$ and $\T\text{-We}\left(X_{\Sigma}\right)$ be the space of toric Cartier and of toric Weil divisors, respectively. The word \enquote{toric} means that they are invariant under the action of the torus. We consider Cartier and Weil divisors with $\Q$-coefficients, i.e.~the spaces $\T\text{-Ca}\left(X_{\Sigma}\right)_{\Q}$ and $\T\text{-We}\left(X_{\Sigma}\right)_{\Q}$, respectively.
\begin{Def} The set $R\left(\Sigma\right)$ is defined to be the collection of smooth fans subdividing $\Sigma$. This forms a directed set under the following partial order 
\[
\Sigma'' \geq \Sigma' \quad \text{iff} \quad \text{$\Sigma''$ is a smooth subdivision of $\Sigma'$}.
\]
The \emph{toric Riemann--Zariski space} of $\X$ is defined as the inverse limit
\[
\mathfrak{X}_{\Sigma} \coloneqq \varprojlim_{\Sigma' \in R\left(\Sigma\right)}X_{\Sigma'}
\] with maps given by the toric proper birational morphisms $X_{\Sigma''} \to X_{\Sigma'}$ induced whenever $\Sigma'' \geq \Sigma'$. 
\end{Def}
\begin{rem}
It is a standard fact in toric geometry that we have a bijective correspondence between fans $\Sigma' \in R\left(\Sigma\right)$ and pairs $(X_{\Sigma'}, \pi_{\Sigma'})$ of smooth and complete toric varieties $X_{\Sigma'}$ together with toric proper birational morphisms $\pi_{\Sigma'} \colon X_{\Sigma'} \to \X$, up to isomorphism (see e.g. \cite[Theorem~3.3.4 and 3.4.11]{CLS}). 
\end{rem}  
\begin{Def}\label{def:toricbdivisor}
We define the group of \emph{toric Cartier $b$-divisors} on $\X$ as the direct limit
\[
\text{Ca}(\mathfrak{X}_{\Sigma})_{\Q} \coloneqq \varinjlim_{\Sigma' \in R\left(\Sigma\right)} \T\text{-Ca}(X_{\Sigma'})_{\Q} 
\]
with maps given by the pull-back map of toric $\Q$-Cartier divisors on $\X$. 
\noindent The group of \emph{toric Weil $b$-divisors} is defined as the inverse limit
\[
\text{We}(\mathfrak{X}_{\Sigma})_{\Q} \coloneqq \varprojlim_{\Sigma' \in R\left(\Sigma\right)} \T\text{-We}(X_{\Sigma'})_{\Q} 
\]
with maps given by the push-forward map of toric $\Q$-Weil divisors on $\X$. 
\end{Def}
\begin{rem} Our definition of toric Cartier and toric Weil $b$-divisors is inspired by Shokurov's $b$-divisors for which \cite{BFF} constitutes a thorough reference. The difference is that in this article we consider only \emph{toric} proper birational models which are somewhat governed by the combinatorics, whereas Shokurov's $b$-divisors are indexed over all proper birational models. Also, note that the toric variety $\X$ is smooth, hence toric Weil divisors are also Cartier.    
\end{rem}
\noindent
\textbf{Notation.} Since we will mostly deal with $\Q$-coefficients, we will omit the $\Q$ from our notations ($\Q$-coefficients always being implicit, unless stated otherwise). We will denote $b$-divisors in bold notation $\D$ to distinguish them from classical divisors $D$.  
\begin{rem}
We make the following remarks.
\begin{enumerate}
\item It follows from the definitions that a toric Weil $b$-divisor $\D$ on $\X$ consists of a tuple of toric Cartier $\Q$-divisors 
\begin{align}\label{tuple}
\D = \left(D_{\Sigma'}\right)_{\Sigma' \in R(\Sigma)},
\end{align}
where $D_{\Sigma'} \in \T\text{-Ca}(X_{\Sigma'})$, which are compatible under push-forward, i.e.~such that we have $\pi_*D_{\Sigma''}=D_{\Sigma'}$ whenever $\Sigma'' \geq \Sigma'$ and $\pi \colon X_{\Sigma''} \to X_{\Sigma'}$ is the corresponding toric morphism. For every $\Sigma' \in R(\Sigma)$, we say that $D_{\Sigma'}$ is the \emph{incarnation} of $\D$ on the model $X_{\Sigma'}$. On the other hand, a toric Cartier $b$-divisor $\pmb{E}$ on $\X$ is a toric Weil $b$-divisor 
\begin{align}\label{tuple1}
\pmb{E} = \left(E_{\Sigma'}\right)_{\Sigma' \in R(\Sigma)},
\end{align}
for which there is a model $X_{\Sigma'}$ for a $\Sigma' \in R\left(\Sigma\right)$ such that for every other higher model $X_{\Sigma''}$, where $\Sigma'' \geq \Sigma'$ in $R\left(\Sigma\right)$, the incarnation $D_{\Sigma''}$ is the pullback of $D_{\Sigma'}$ on $X_{\Sigma'}$. In this case, we say that $\pmb{E}$ is \emph{determined} by $\Sigma'$. A toric Weil or Cartier $b$-divisor $\D$ or $\pmb{E}$ is assumed to come always with a tuple as in (\ref{tuple}) or (\ref{tuple1}), respectively.   
\item Since our fans are assumed to be smooth, we may identify Cartier and Weil toric divisors. 
\item We clearly have the containment $\text{Ca}(\mathfrak{X}_{\Sigma})_{\Q} \subseteq \text{We}(\mathfrak{X}_{\Sigma})_{\Q}$. Henceforth, we will refer to a \emph{toric Weil $b$-divisor} simply as a \emph{toric $b$-divisor}.
\end{enumerate}
\end{rem}
The next lemma gives the combinatorial analogues of toric $b$-divisors. Before stating it, we give some definitions.
\begin{Def}\label{pushf}
Let $\Sigma' \in R(\Sigma)$ and let $\psi_{\Sigma'}$ be a virtual support function on the toric variety $X_{\Sigma'}$. Then for every fan $\tilde{\Sigma} \in R(\Sigma)$ such that $\tilde{\Sigma} \leq \Sigma'$ and inducing $\pi \colon X_{\Sigma'} \to X_{\tilde{\Sigma}}$, we will denote by $\pi_*(\psi_{\Sigma'})$ the piecewise linear function on $\tilde{\Sigma}$ defined by the values of $\psi_{\Sigma'}$ on the rays of $\tilde{\Sigma}$.   
\end{Def}
\begin{Def}
A function $f \colon N_{\R}\to \R$ is called \emph{conical} if $f(\lambda x) = \lambda f(x)$ for all $\lambda \in \R_{\geq 0}$. 
\end{Def}
\begin{Def} The set $N^{\prim}$ is defined to be the subset of $N$ consisting of primitive lattice elements. 
\end{Def}
\begin{lemma}\label{equi} There is a natural bijective correspondence between the following objects:
\begin{enumerate}
\item\label{cc} Toric $b$-divisors on $\X$.
\item\label{cccc} Collections $\left\{\psi_{\Sigma'}\right\}_{\Sigma' \in R\left(\Sigma\right)}$ of $\R$-valued functions on $N_{\R}$ satisfying the following two conditions:
\begin{enumerate}
\item For each $\Sigma' \in R\left(\Sigma\right)$, the function $\psi_{\Sigma'}$ is a virtual support function for $\Sigma'$.
\item\label{compa} For every toric birational morphism $\pi \colon X_{\Sigma''} \to X_{\Sigma'}$ induced by a smooth subdivision $\Sigma'' \geq \Sigma'$ we have $\pi_*(\psi_{\Sigma''}) = \psi_{\Sigma'}$. 
\end{enumerate}
\item\label{ccc} Conical functions $\tilde{\phi} \colon N_{\Q} \to \Q$.
\end{enumerate}
\end{lemma}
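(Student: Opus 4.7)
The strategy is to prove the two equivalences $(1) \Leftrightarrow (2)$ and $(2) \Leftrightarrow (3)$ separately, exploiting on each smooth and complete model $X_{\Sigma'}$ the classical toric dictionary recalled in Section~\ref{notations} and then carefully handling compatibility under refinement.

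For $(1) \Leftrightarrow (2)$, I would apply on each $X_{\Sigma'}$ the classical bijection between toric $\Q$-Cartier divisors and virtual support functions for $\Sigma'$. Since $\Sigma'$ is smooth, every toric Weil divisor on $X_{\Sigma'}$ is automatically Cartier, so this bijection transports to toric $\Q$-Weil divisors on the model. It then remains to show that push-forward of toric Weil divisors translates to the push-forward operation on virtual support functions from Definition~\ref{pushf}: for a toric proper birational map $\pi \colon X_{\Sigma''} \to X_{\Sigma'}$ coming from $\Sigma'' \geq \Sigma'$, the push-forward sends a prime divisor $D_{\tau}$ with $\tau \in \Sigma''(1)$ to $D_{\tau}$ if $\tau \in \Sigma'(1)$ and to $0$ otherwise. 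At the level of support functions this says precisely that the values of $\psi_{\Sigma''}$ at primitive generators of rays in $\Sigma'(1)$ are retained, and the resulting assignment is then extended linearly on each cone of $\Sigma'$, which is exactly $\pi_*\psi_{\Sigma''}$. Thus compatible tuples of divisors match compatible tuples of support functions.

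For $(2) \Rightarrow (3)$, given a compatible collection $\{\psi_{\Sigma'}\}$, I would define $\tilde{\phi}\colon N_{\Q}\to\Q$ by setting $\tilde{\phi}(0)=0$ and, for $v\neq 0$, writing uniquely $v = \lambda v^{\prim}$ with $\lambda \in \Q_{>0}$ and $v^{\prim} \in N^{\prim}$, and then choosing any $\Sigma' \in R(\Sigma)$ whose rays include $v^{\prim}$ and setting $\tilde{\phi}(v) := \lambda\,\psi_{\Sigma'}(v^{\prim})$. Conicality is immediate. For the converse direction $(3) \Rightarrow (2)$, given $\tilde{\phi}\colon N_{\Q}\to\Q$ conical and a fan $\Sigma' \in R(\Sigma)$, I would construct $\psi_{\Sigma'}$ on each maximal cone $\sigma \in \Sigma'(n)$ by selecting the unique $m_{\sigma} \in M_{\Q}$ with $\langle m_{\sigma}, v_{\tau}\rangle = \tilde{\phi}(v_{\tau})$ for every ray $\tau \leq \sigma$; smoothness of $\sigma$ guarantees existence and uniqueness of $m_{\sigma}$ because the primitive ray generators form a $\Z$-basis of $N$. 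The pieces glue along common faces since they agree on all shared primitive ray generators, so we obtain a virtual support function. The compatibility condition (b) of Lemma~\ref{equi} is automatic because both $\psi_{\Sigma''}$ and $\psi_{\Sigma'}$ are forced to equal $\tilde{\phi}(v_{\tau})$ on each $\tau \in \Sigma'(1)$. The two constructions are mutually inverse: running $(3) \to (2) \to (3)$ recovers $\tilde{\phi}$ on $N^{\prim}$ and hence everywhere by conicality, while $(2) \to (3) \to (2)$ recovers each $\psi_{\Sigma'}$ on the primitive generators of its rays and hence everywhere by piecewise linearity.

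The main obstacle is the well-definedness of the assignment $(2)\to(3)$. This relies on two standard but non-trivial toric facts that should be invoked explicitly: first, every primitive lattice vector $v^{\prim} \in N^{\prim}$ arises as a ray generator of some smooth $\Sigma' \in R(\Sigma)$, which follows by performing a star subdivision of $\Sigma$ along the ray through $v^{\prim}$ and then applying a toric resolution of singularities; and second, any two such fans $\Sigma_1, \Sigma_2$ admit a common smooth refinement $\Sigma_3 \in R(\Sigma)$ still containing $v^{\prim}$ as a ray, whereupon the compatibility condition (b) together with Definition~\ref{pushf} forces $\psi_{\Sigma_1}(v^{\prim}) = \psi_{\Sigma_3}(v^{\prim}) = \psi_{\Sigma_2}(v^{\prim})$, yielding independence of the chosen model.
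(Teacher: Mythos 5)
Your proof is correct and follows essentially the same route as the paper: on each smooth model the classical dictionary between toric divisors and virtual support functions is invoked, compatibility under push-forward is matched on both sides, $\tilde{\phi}$ is defined on $N^{\prim}$ by choosing any $\Sigma'\in R(\Sigma)$ containing the given ray and then extending conically, and well-definedness is checked via a common smooth refinement. Your proposal is slightly more explicit than the paper on two points that the paper leaves implicit — that every primitive $v$ does occur as a ray of some $\Sigma'\in R(\Sigma)$ (star subdivision at $v$ followed by toric resolution), and that the $(1)\Leftrightarrow(2)$ step really is the classical support-function dictionary combined with the fact that divisorial push-forward records the coefficients only along retained rays — but these are clarifications, not a different argument.
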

\begin{proof}
The bijective correspondence between (\ref{cc}) and (\ref{cccc}) comes from the general theory of toric divisors and the fact that $b$-divisors are defined as a projective limit, hence requiring the compatibility condition in (\ref{compa}). 

To see that (\ref{cccc}) implies (\ref{ccc}) let $\left\{\psi_{\Sigma'}\right\}_{\Sigma'\in R\left(\Sigma\right)}$ be a collection as in (\ref{cccc}). Note that a conical function on $N_{\Q}$ is determined by it values on $N^{\prim}$. Hence, let $v$ be in $N^{\text{prim}}$ and consider $\tau_v$ the ray spanned by $v$.  Let $\Sigma' \in R\left(\Sigma\right)$ be any fan containing $\tau_v$. We define $\tilde{\phi}(v) \coloneqq \psi_{\Sigma'}(v)$. To see that this is well defined, let $\Sigma'' \in R\left(\Sigma\right)$ be any other fan containing $\tau_v$. Let $\Sigma'''\in R\left(\Sigma\right)$ be a common refinement of $\Sigma'$ and $\Sigma''$. Then, clearly, $\tau_v \in \Sigma'''$ and, denoting by $\pi'\colon X_{\Sigma'''} \to X_{\Sigma'}$ and by $\pi''\colon X_{\Sigma'''} \to X_{\Sigma''}$ the corresponding morphisms, we have \[\psi_{\Sigma'}(v) = \pi'_*\psi_{\Sigma'''}(v) = \pi''_*\psi_{\Sigma'''}(v) = \psi_{\Sigma''}(v).\]
Now, to see that (\ref{ccc}) implies (\ref{cc}) suppose that we are given a function $\tilde{\phi} \colon N^{\text{prim}} \to \Q$. This induces for every $\Sigma' \in R\left(\Sigma\right)$ a toric Weil divisor 
\[
D_{\tilde{\phi},\Sigma'}\coloneqq \sum_{\tau \in \Sigma'(1)}a_{\tau}D_{\tau}
\]
on $X_{\Sigma'}$  by setting $a_{\tau} = -\tilde{\phi}\left(v_{\tau}\right)$ and hence, by smoothness, a toric Cartier divisor on each $X_{\Sigma'}$. The fact that this collection forms a toric $b$-divisor follows from the fact that for every toric morphsim $\pi \colon X_{\Sigma''} \to X_{\Sigma'}$ coming from a regular subdivision $\Sigma'' \geq \Sigma'$, we have \[\pi_*\left(D_{\tilde{\phi},\Sigma''}\right) = \pi_* \left(\sum_{\tau \in \Sigma''(1)}a_{\tau}D_{\tau}\right) = \sum_{\tau \in \Sigma'(1)}a_{\tau}D_{\tau} = D_{\tilde{\phi},\Sigma'}.\qedhere
\]
\end{proof} 
Before we end this section, we define a special class of smooth subdivisions of the fan~$\Sigma$. The following definitions are taken taken from \cite[Section~3.1]{CLS}. 
\begin{Def}\label{barycentric1} Let $\sigma$ be a $d$-dimensional smooth cone in $\Sigma$. Let $\{v_1, \dotsc, v_d\}$ be generators of $\sigma$ forming part of a $\Z$-basis of $N$. The \emph{barycenter} $v_{\sigma}$ of $\sigma$ is defined to be the element
\[
v_{\sigma} \coloneqq \sum_{i=1}^{d}v_i.
\]
\end{Def}
\begin{Def}\label{barycentric2}
Given a fan $\Sigma$ and a smooth cone $\sigma \in \Sigma$, the \emph{barycentric subdivision of $\Sigma$ corresponding to the primitive element $v_{\sigma}$}, denoted by $\Sigma^*(v_{\sigma})$, is the fan consisting of the following cones:
\begin{itemize}
\item $\tau \in \Sigma$, where $v_{\sigma} \notin \tau$.
\item $\text{cone}(\tau, v_{\sigma})$ where $v_{\sigma} \notin \tau \in \Sigma$ and $\{v_{\sigma} \cup \tau\}$ is contained in a cone $\sigma' \in \Sigma$. 
\end{itemize} 
If $\Sigma$ is smooth, then $\Sigma^*(v_{\sigma})$ is a smooth refinement of $\Sigma$. We also refer to $\Sigma^*(v_{\sigma})$ as the \emph{barycentric subdivision of $\Sigma$ at the cone $\sigma$}. 
\end{Def}
\begin{exa}
Consider the (non-complete) fan $\Sigma \subseteq \R^3$ consisting of the cone $\sigma = \R^3_{\geq 0}$ together with all of its faces. Figure \ref{flor} shows a $2$-dimensional picture of two star subdivisions: one at the barycenter of $\sigma$, and one at the barycenter of the two dimensional cone $\tau$.   
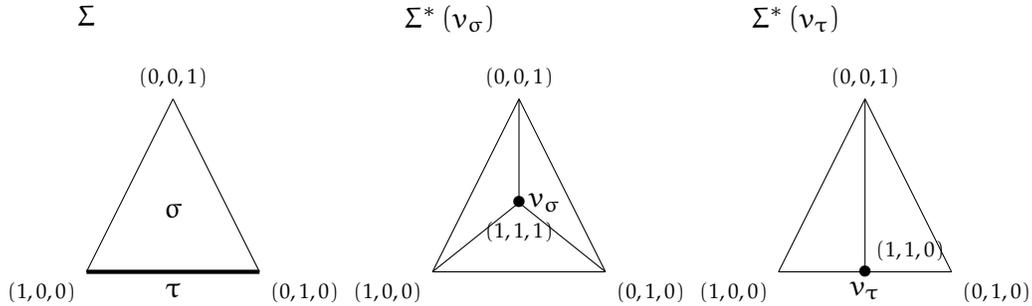
\begin{figure}[H]
\begin{center}
\begin{tikzpicture}[scale=1.15]
    \draw[ultra thick]  (0,0) -- (2,0) node [midway,below]{$\tau$};
    \draw (0,0) -- (1,2);
    \draw (1,2) -- (2,0);
    \draw (1,0.7) node{$\sigma$};
    \draw (0,0) node[below left]{\scriptsize{$(1,0,0)$}};  
    \draw (2,0) node[below right]{\scriptsize{$(0,1,0)$}}; 
    \draw (1,2) node[above]{\scriptsize{$(0,0,1)$}}; 
    \draw (0,3.2) node[below]{$\Sigma$};
     \draw  (4,0) -- (6,0);
     \draw (4.2,3.2) node[below]{$\Sigma^*\left(v_{\sigma}\right)$};
    \draw (4,0) -- (5,2);
    \draw (5,2) -- (6,0);
    \draw (5,0.8) node[right]{$v_{\sigma}$};
    \draw (4,0)--(5,0.8);
    \draw (6,0)--(5,0.8);
    \draw (5,2)--(5,0.8);
    \draw (5,0.8) node{$\bullet$};
    \draw (4,0) node[below left]{\scriptsize{$(1,0,0)$}};  
    \draw (6,0) node[below right]{\scriptsize{$(0,1,0)$}}; 
    \draw (5,2) node[above]{\scriptsize{$(0,0,1)$}}; 
    \draw (5,0.7) node[below]{\scriptsize{$(1,1,1)$}}; 
    \draw  (8,0) -- (10,0) node[midway,below]{$v_{\tau}$};
    \draw (8.2,3.2) node[below]{$\Sigma^*\left(v_{\tau}\right)$};
    \draw (8,0) -- (9,2);
    \draw (9,2) -- (10,0);
    \draw (9,0)--(9,2);
    \draw (9,0) node{$\bullet$};
      \draw (8,0) node[below left]{\scriptsize{$(1,0,0)$}};  
    \draw (10,0) node[below right]{\scriptsize{$(0,1,0)$}}; 
    \draw (9,2) node[above]{\scriptsize{$(0,0,1)$}}; 
    \draw (9,0) node[above right]{\scriptsize{$(1,1,0)$}}; 
    
\end{tikzpicture}
\end{center}
\caption{Star subdivisions of $\R^3_{\geq 0}$}\label{flor}
\end{figure}
\end{exa}
\begin{rem}\label{starcofinite}
We have the following two remarks.
\begin{enumerate}
\item Given any smooth subdivision $\Sigma'$ of $\Sigma$ we can always find a smooth fan  $\tilde{\Sigma}$ obtained by a sequence of barycentric subdivisions of $\Sigma$ which  dominates $\Sigma'$. This follows from the fact that barycentric subdivisions correspond to blow ups of toric varieties along smooth torus invariant centers (\cite[Proposition~3.3.15]{CLS}) and a particular case of the following De Concini--Procesi Theorem (\cite[Theorem~4.1]{concini}).
\begin{theorem} Let $X$ and $X'$ be smooth toric varieties with a toric birational map (not necessarily everywhere defined) $f\colon X \dashrightarrow X'$. Then, there exists a smooth toric variety $\tilde{X}$ obtained from $X$ by a finite sequence $\pi \colon \tilde{X} \to X$ of blow-ups along smooth $2$-codimensional invariant centers, such that $f \circ \pi$ extends to a regular map.
\end{theorem}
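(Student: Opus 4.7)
The plan is to translate the theorem into fan-theoretic language and to produce $\tilde{X}$ as the smooth toric variety corresponding to a fan $\tilde{\Sigma}$ that refines both the fan $\Sigma$ of $X$ and the fan $\Sigma'$ of $X'$, and that is reached from $\Sigma$ by an iterated sequence of barycentric subdivisions at $2$-dimensional smooth cones. By the correspondence mentioned after Definition \ref{barycentric2} (see \cite[Proposition~3.3.15]{CLS}), each such barycentric subdivision realizes a blow-up along a smooth codimension-$2$ torus-invariant subvariety, so a composition of these subdivisions yields the desired $\pi\colon \tilde{X}\to X$, while the fact that $\tilde{\Sigma}$ refines $\Sigma'$ guarantees that $f\circ \pi$ extends to a regular morphism $\tilde{X}\to X'$.

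First I would construct a smooth common refinement of $\Sigma$ and $\Sigma'$. Since $f$ is a toric birational map, both fans lie in the same lattice $N$. The cones $\sigma\cap \sigma'$, as $\sigma$ ranges over $\Sigma$ and $\sigma'$ over $\Sigma'$, assemble into a rational polyhedral fan $\Sigma_0$ refining both $\Sigma$ and $\Sigma'$ (on the intersection of their supports, which contains the domain on which $f$ is defined). A standard resolution algorithm, which iteratively performs star subdivisions at primitive vectors inside singular cones, then converts $\Sigma_0$ into a smooth common refinement $\tilde{\Sigma}$.

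The heart of the argument is to show that the refinement $\tilde{\Sigma}\to \Sigma$ can actually be realized by a finite sequence of barycentric subdivisions at $2$-dimensional smooth cones. I would proceed by induction on a suitable complexity measure, for instance the number of rays of $\tilde{\Sigma}$ not yet appearing in the current intermediate smooth fan $\Sigma_i$ with $\Sigma \preceq \Sigma_i \preceq \tilde{\Sigma}$. For the inductive step, given a primitive vector $v\in \tilde{\Sigma}(1)\setminus \Sigma_i(1)$, write $v= c_1 v_1 + \cdots + c_k v_k$ as a nonnegative integer combination of generators of the smallest cone of $\Sigma_i$ containing $v$ (which is possible by smoothness). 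One then constructs $v$ iteratively via barycentric subdivisions: $v_1+v_2$ arises directly as the barycenter of $\cone(v_1,v_2)$, then $2v_1+v_2 = v_1 + (v_1+v_2)$ as a further barycenter, and so on. At each stage the subdivision is chosen to lie inside a smooth $2$-dimensional cone of the current fan, so that smoothness is preserved throughout.

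The principal obstacle is precisely this last combinatorial step. A star subdivision at an arbitrary new primitive vector in a higher-dimensional smooth cone typically destroys smoothness, and it is not obvious a priori that the restricted class of $2$-dimensional barycentric moves is rich enough to realize every smooth refinement. The hard part will be the bookkeeping: tracking how each barycentric subdivision alters the fan structure around $v$ and ensuring that we eventually recover the precise decomposition of the maximal cones prescribed by $\tilde{\Sigma}$. One likely needs a double induction on the dimension of the ambient cone containing $v$ and on the total weight $c_1+\cdots+c_k$, using the smoothness hypothesis at each step to keep the arithmetic of the primitive generators under control. This combinatorial bootstrap is the substantive content of the De Concini--Procesi theorem.
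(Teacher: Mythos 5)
The paper does not actually prove this statement; it quotes it as the De Concini--Procesi theorem (\cite[Theorem~4.1]{concini}) and uses it as a citation. So there is no internal proof to compare against, and your proposal must be judged on its own merits.

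Your fan-theoretic translation is correct: $\Sigma$ and $\Sigma'$ live in the same $N_{\R}$, $f\circ\pi$ is regular exactly when the fan $\tilde{\Sigma}$ of $\tilde{X}$ refines $\Sigma'$, and barycentric subdivisions of smooth $2$-dimensional cones correspond to blow-ups along smooth codimension-$2$ torus-invariant centers via \cite[Proposition~3.3.15]{CLS}. So the theorem does reduce to a purely combinatorial statement about reaching a refinement of $\Sigma'$ by $2$-dimensional star subdivisions of $\Sigma$. However, the strategy you propose for that combinatorial step has a structural flaw. You first fix a smooth common refinement $\tilde{\Sigma}$ of $\Sigma$ and $\Sigma'$ (produced by a generic resolution algorithm) and then try to show that \emph{this particular} refinement $\tilde{\Sigma}\to\Sigma$ can be realized by a sequence of $2$-dimensional barycentric subdivisions. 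That is a much stronger assertion than the theorem requires, and it is not available: realizing an arbitrary prescribed smooth refinement of a smooth fan by star subdivisions along barycenters (let alone $2$-dimensional ones) is essentially the toric strong factorization/Oda problem, which is not a tool one can invoke. In particular, the smooth fan produced by a standard desingularization algorithm has no reason to be reachable by $2$-dimensional star subdivisions of $\Sigma$; and even when a desired ray $v=c_1v_1+\dotsb+c_kv_k$ with $k\geq 3$ can be produced, the surrounding cone structure forced on you by the sequence of $2$-dimensional star subdivisions need not match that of $\tilde{\Sigma}$, so the induction on missing rays does not close.

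The De Concini--Procesi argument avoids this by \emph{not} committing to a target fan in advance. It builds the chain of $2$-dimensional star subdivisions adaptively, choosing at each stage a $2$-dimensional cone whose subdivision strictly decreases a suitable defect measuring the failure of the current fan to refine $\Sigma'$ (roughly, how badly cones of the current fan straddle walls of $\Sigma'$); termination then yields some $\tilde{\Sigma}$ refining $\Sigma'$, generally not the minimal common refinement and not one you could have written down at the outset. So while your reduction to the combinatorial statement is right, and while you are candid that the bootstrap is the substance of the theorem, the route you sketch for closing that gap, namely hitting a pre-selected $\tilde{\Sigma}$, should be abandoned in favor of the adaptive defect-decreasing construction; otherwise you are attempting to prove something that may well be false or is at best an open strengthening.
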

Hence, the set of barycentric subdivisions of $\Sigma$ along smooth cones is cofinal in $R(\Sigma)$.
\item\label{aback}
We can give $R\left(\Sigma\right)$ the structure of a connected, oriented graph in the following way: let $\Sigma' \in R\left(\Sigma\right) $. If $\Sigma'' \in R(\Sigma)$ is such that $\Sigma'' = \Sigma'^*(v_{\sigma})$ for a cone $\sigma \in \Sigma'$, then we put an arrow $\Sigma'' \rightarrow \Sigma'$. 
\end{enumerate}
\end{rem}
To finalize this section, note that Lemma \ref{equi} implies that a priori we don't have any control over the coefficients of the prime toric divisors appearing in the different incarnations of a toric $b$-divisor as we move along the toric birational models. This is why, in general, in order to prove integrability results about toric $b$-divisors, we need to impose some positivity condition. It turns out, as it will become clear in the following section, that the right positivity condition is \emph{nefness}.

\section{Integrability of toric $b$-divisors}\label{section3}
In this section we define the mixed degree of a toric $b$-divisor. We further investigate the question regarding both necessary and sufficient conditions for integrability (i.e.~existence and finiteness of the mixed degree). We give a partial answer to this question in the general case of dimension $n$ and, in the follo\-wing section, a complete answer in the case of toric surfaces. Specifically, in the $n$-dimensional case, we show that any collection of $n$ nef toric $b$-divisors (and any difference of such) is integrable and give a formula for its mixed degree as the mixed volume of bounded convex sets (and differences of such). As a corollary we get a Brunn--Minkowski type inequality for the degree of nef toric $b$-divisors. 

Before introducing the definition of the \emph{mixed degree} of a collection of toric $b$-divisors, note that there is an obvious intersection pairing 
\[
\underbrace{\text{Ca}(\mathfrak{X}_{\Sigma}) \times \dotsm \times \text{Ca}(\mathfrak{X}_{\Sigma})}_{n\timess} \longrightarrow \Q
\]
of toric Cartier $b$-divisors defined as follows: if $(\pmb{E}_1, \dotsc ,\pmb{E}_2) \in \text{Ca}(\mathfrak{X}_{\Sigma}) \times \dotsm \times \text{Ca}(\mathfrak{X}_{\Sigma})$, then let $\Sigma' \in R\left(\Sigma\right)$ be sufficiently large such that $\pmb{E}_i$ is determined on $\Sigma'$ for all $i=1, \dotsc, n$. We then set  
\[
\pmb{E}_1\dotsm \pmb{E}_n \coloneqq E_{1, \Sigma'} \dotsm E_{n, \Sigma'}.
\]
This is independent of the choice of a common determination $\Sigma'$ and hence the pairing given by 
\[
\left(\pmb{E}_1,\dotsc, \pmb{E}_n\right) \longmapsto \pmb{E}_1\dotsm \pmb{E}_n
\] is well defined. Moreover, we can extend the above intersection pairing to a pairing
\[
\underbrace{\text{Ca}(\mathfrak{X}_{\Sigma}) \times \dotsm \times \text{Ca}(\mathfrak{X}_{\Sigma}) \times \text{We}(\mathfrak{X}_{\Sigma})}_{n \timess} \longrightarrow \Q
\]
in the following way: if $(\pmb{E}_1, \dotsc ,\pmb{E}_{n-1}, \D) \in \text{Ca}(\mathfrak{X}_{\Sigma}) \times \dotsm \times\text{Ca}(\mathfrak{X}_{\Sigma}) \times \text{We}(\mathfrak{X}_{\Sigma})$, we let $\Sigma'$ be a common determination of $\pmb{E}_i$ for $i =1, \dotsc, n-1$. We then set 
\[
\pmb{E}_1 \dotsm \pmb{E}_{n-1} \cdot \D \coloneqq E_{1, \Sigma'} \dotsm E_{n-1, \Sigma'}\cdot D_{\Sigma'}.
\]
Using the projection formula, one can see that the above pairing is independent of the choice of a common determination $\Sigma'$ and hence is well defined. Indeed, let $\Sigma'' \in R(\Sigma)$ be another common determination of the $\pmb{E}_i$'s. W.l.o.g. assume that $\Sigma'' \geq \Sigma'$ and let $\pi_{\Sigma''} \colon X_{\Sigma''} \to X_{\Sigma'}$ the induced toric, proper, birational morphism. We get 
\begin{align*}
E_{1, \Sigma''} \dotsm E_{n-1, \Sigma''}\cdot D_{\Sigma''} &= \pi_{\Sigma''}^*\left(E_{1, \Sigma'}\right) \dotsm \pi_{\Sigma''}^*\left(E_{n-1, \Sigma'}\right)\cdot D_{\Sigma''} \\ &= \pi_{\Sigma''}^*\left(E_{1, \Sigma'} \dotsm E_{n-1, \Sigma'}\right)\cdot D_{\Sigma''} \\ &= E_{1, \Sigma'} \dotsm E_{n-1, \Sigma'}\cdot \pi_{\Sigma''*}\left(D_{\Sigma''}\right) \\ &= E_{1, \Sigma'} \dotsm E_{n-1, \Sigma'}\cdot D_{\Sigma'},
\end{align*} 
  where we have used the projection formula to pass from the second to the third line. However, in general, there is no obvious way to define an intersection pairing for more than one element in $\We(\mathfrak{X}_{\Sigma})$. The following definition is inspired by the idea of \enquote{integrability} of a function (see \cite[Definition~3.17]{BKK}). Recall that $R\left(\Sigma\right)$ is a directed set. In particular, it is a \emph{net} and we have a well defined notion of limit.     
\begin{Def} Let $\D_1, \dotsc, \D_n$ be a collection of toric $b$-divisors. We define its \emph{mixed degree} to be the limit 
\[
\D_1 \dotsm \D_n \coloneqq \lim_{\Sigma' \in R\left(\Sigma\right)} D_{1,\Sigma'} \dotsm D_{n,\Sigma'},
\] 
if it exists and is finite. If $\D= \D_1 = \dotsb =\D_n$, then we call 
\[
\D^n = \lim_{\Sigma' \in R\left(\Sigma\right)} D_{\Sigma'}^n,
\]
if the limit exists and is finite, the \emph{degree} of the $b$-divisor $\D$. A toric $b$-divisor whose degree exists is called \emph{integrable}.
\end{Def}
One of the main results of this section is that the mixed degree of a collection of \emph{nef} toric $b$-divisors (see Definition \ref{nefdef}) exists. In particular, a nef toric $b$-divisor is automatically integrable.

\subsection*{Integrability of nef toric $b$-divisors}
 \begin{Def}\label{nefdef} A toric $b$-divisor $\D =  (D_{\Sigma'})_{\Sigma' \in R\left(\Sigma\right)}$ is called \emph{nef} if there exists a cofinal subset $S \subseteq R(\Sigma)$ such that $D_{\Sigma'}$ is nef in $X_{\Sigma'}$ for all $\Sigma' \in S$.
\end{Def}

\begin{rem}\label{abab}
With notations as above, let $\psi_{\Sigma'} = \psi_{D_{\Sigma'}} \colon N_{\R} \to \R$ be the virtual support function corresponding to $D_{\Sigma'}$. Recall that, if $D_{\Sigma'} = \sum_{\tau \in \Sigma'(1)}a_{\tau}D_{\tau}$, then $\psi_{\Sigma'}$ is characterized by satisfying $\psi_{\Sigma'}(v_{\tau}) = -a_{\tau}$, where $v_{\tau}$ denotes the primitive vector spanning the ray $\tau$. Hence, if $\Sigma'' \geq \Sigma'$ and the map $\pi \colon X_{\Sigma''} \to X_{\Sigma'}$ is the associated toric morphism, then 
\[
D_{\Sigma''} \leq \pi^*D_{\Sigma'} \quad \text{ iff }\quad \psi_{\Sigma''}(v_{\tau}) \geq \psi_{\Sigma'}(v_{\tau}) 
\]
for every $\tau \in \Sigma''(1)$.    
\end{rem}
Now, recall that classical nef toric Cartier divisors are in bijective correspondence with virtual support functions which have the additional property of being \emph{concave}. Hence, using Remark \ref{abab}, one expects that monotonicity properties of nef toric divisors follow directly from such analogous properties of concave functions. Indeed, we have the following lemma.  
\begin{lemma}\label{con1} Let $\D$ be a nef toric $b$-divisor on $\X$ and let $\Sigma'$ be a fan in $R\left(\Sigma\right)$. Then, for a sufficiently large $\Sigma'' \geq \Sigma'$ in $R\left(\Sigma\right)$, we have that $D_{\Sigma''} \leq \pi_{\Sigma''}^*D_{\Sigma'}$. Here, the morphism $\pi_{\Sigma''}\colon X_{\Sigma''} \to X_{\Sigma'}$ denotes the induced toric, proper, birational morphism.  
\end{lemma}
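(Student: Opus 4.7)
The plan is to translate the claimed divisor inequality into an inequality of virtual support functions via Remark \ref{abab}, and then exploit concavity and conicality of nef virtual support functions. Since $\D$ is nef, Definition \ref{nefdef} supplies a cofinal subset $S \subseteq R(\Sigma)$ on which the incarnations of $\D$ are nef. Given the prescribed $\Sigma' \in R(\Sigma)$, cofinality lets me pick some $\Sigma'' \in S$ with $\Sigma'' \geq \Sigma'$; write $\psi'$ and $\psi''$ for the virtual support functions associated to $D_{\Sigma'}$ and $D_{\Sigma''}$. Nefness of $D_{\Sigma''}$ makes $\psi''$ concave on $N_{\R}$, and as a virtual support function it is also conical. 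By Remark \ref{abab}, the lemma reduces to verifying $\psi''(v_\tau) \geq \psi'(v_\tau)$ for every $\tau \in \Sigma''(1)$.

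To verify this inequality, I will fix $\tau \in \Sigma''(1)$ and choose a cone $\sigma \in \Sigma'$ containing its primitive vector $v_\tau$, which exists because $\Sigma''$ subdivides $\Sigma'$. Writing $\sigma = \cone(v_{\rho_1}, \dotsc, v_{\rho_d})$ with $\rho_i \in \Sigma'(1)$, I can expand $v_\tau = \sum_{i=1}^d \lambda_i v_{\rho_i}$ with coefficients $\lambda_i \geq 0$. Linearity of $\psi'$ on $\sigma$ gives $\psi'(v_\tau) = \sum_i \lambda_i \psi'(v_{\rho_i})$. On the other hand, concavity of $\psi''$ applied on the unit simplex, combined with its positive homogeneity, yields the superadditivity estimate $\psi''(v_\tau) \geq \sum_i \lambda_i \psi''(v_{\rho_i})$.

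The last ingredient is the compatibility condition (\ref{compa}) for the toric $b$-divisor: since $\Sigma''$ subdivides $\Sigma'$, each ray $\rho_i$ of $\Sigma'$ persists as a ray of $\Sigma''$, and the push-forward identity $\pi_{\Sigma''*}(D_{\Sigma''}) = D_{\Sigma'}$ forces the coefficient equality $\psi''(v_{\rho_i}) = \psi'(v_{\rho_i})$ for each $i$. Stringing the three displayed relations together produces $\psi''(v_\tau) \geq \psi'(v_\tau)$ ray by ray, which is exactly what Remark \ref{abab} demands. I do not expect any serious obstacle; the only substantive use of hypothesis is invoking cofinality of $S$ to secure a concave $\psi''$ above $\Sigma'$, without which none of the superadditivity estimates would be available.
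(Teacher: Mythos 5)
Your argument is correct and follows essentially the same route as the paper's proof: reduce the divisor inequality to the ray-wise support-function inequality via Remark \ref{abab}, use cofinality of $S$ to pick a concave $\psi''$ above $\Sigma'$, expand $v_\tau$ in a cone of $\Sigma'$, and conclude by combining concavity (superadditivity of conical concave functions), linearity of $\psi'$ on cones of $\Sigma'$, and the push-forward compatibility $\psi''(v_{\rho_i})=\psi'(v_{\rho_i})$. The only cosmetic difference is that the paper takes $\sigma$ to be the cone with $v_\tau$ in its relative interior (so $a_i>0$), whereas you allow $\lambda_i\geq 0$; this does not affect the argument.
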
 
\begin{proof} 
We denote by $\relint(\sigma)$ the relative interior of a convex set $\sigma$. Let $\Sigma'' \geq \Sigma'$ be in $R\left(\Sigma\right)$ such that $D_{\Sigma''}$ is nef and hence $\psi_{\Sigma''}$ is concave (note that this is not necessarily the case for all $\Sigma'' \geq \Sigma$). By Remark \ref{abab}, and using the same notation therein, it suffices to show that for all $\tau \in \Sigma''(1)$ the inequality
\begin{align}\label{con}
\psi_{\Sigma''}(v_{\tau}) \geq \psi_{\Sigma'}(v_{\tau}) 
\end{align}
is satisfied. Now, if $\tau \in \Sigma'(1)$ then $\psi_{\Sigma''}(v_{\tau}) = \psi_{\Sigma'}(v_{\tau})$. Otherwise, assume that $\tau \in \Sigma''(1) \setminus \Sigma'(1)$. Let $\tau_1, \dotsc , \tau_r \in \Sigma'(1)$ such that $\sigma \coloneqq \text{cone}\langle \tau_1, \dotsc , \tau_r \rangle \in \Sigma'$ and $\tau \in \relint(\sigma)$. Then we can write 
\[
v_{\tau} = \sum_{i=1}^ra_i v_{\tau_i} \quad \text{with}\quad a_i > 0.
\]
Hence, we have 
\[
\psi_{\Sigma''}(v_{\tau}) = \psi_{\Sigma''}\left(\sum_{i=1}^ra_i v_{\tau_i}\right) \geq  \sum_{i=1}^ra_i\psi_{\Sigma''}(v_{\tau_i}) = \sum_{i=1}^ra_i\psi_{\Sigma'}(v_{\tau_i})= \psi_{\Sigma'}(v_{\tau}).
\]
Note that the second inequality follows by the concavity of $\psi_{\Sigma''}$. Thus, the inequality~(\ref{con}) is satisfied and thus the statement of the lemma follows.
\end{proof}
Next, we will show that this monotonicity property extends to intersection products. Recall the definition of the rational polytope $P_D$ associated to a toric divisor $D$ from Section~\ref{notations}. 
\begin{rem}\label{aaa}
Lemma \ref{con1} and its proof imply that if $\D = (D_{\Sigma'})_{\Sigma' \in R\left(\Sigma\right)}$ is a nef toric $b$-divisor, then for every fan $\Sigma'$ in $R\left(\Sigma\right)$, there exists a sufficiently large refinement $\Sigma'' \geq \Sigma'$ in $R\left(\Sigma\right)$ such that
\[
\Sigma'' \geq \Sigma' \quad \text{ iff }\quad P_{D''} \subseteq P_{D'}.
\]
\end{rem}
We can now state the aimed monotonicity property for intersection products of nef toric $b$-divisors. 
\begin{lemma}\label{mul}
Let $\D_1, \dotsc, \D_n$ be nef toric $b$-divisors. Then for every fan $\Sigma'$ in $R\left(\Sigma\right)$, there exists a sufficiently large refinement $\Sigma'' \geq \Sigma'$ in $R\left(\Sigma\right)$ such that the inequality 
\[
D_{1, \Sigma''} \dotsm D_{n, \Sigma''} \leq D_{1, \Sigma'} \dotsm D_{n, \Sigma'}
\]
holds true.
\end{lemma}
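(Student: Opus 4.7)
The strategy is to pass to a common refinement $\Sigma''\geq\Sigma'$ where every incarnation $D_{i,\Sigma''}$ is nef and dominated by the pullback $\pi_{\Sigma''}^{*}D_{i,\Sigma'}$, then expand the difference of intersection numbers by a telescoping identity and conclude by the projection formula.

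First, I would exploit Definition \ref{nefdef} and the directed structure of $R(\Sigma)$ to select a single $\Sigma''\geq\Sigma'$ that witnesses nefness for all $n$ of the $\D_i$ simultaneously: for each $i$ there is a cofinal subset $S_i\subseteq R(\Sigma)$ on which $D_{i,\cdot}$ is nef, and by iterating the cofinality of each $S_i$ together with the combinatorial characterization of nefness via concave functions from Lemma \ref{equi} (which shows that once $\psi_{i,\Sigma''}$ coincides with the concave closure of $\tilde\phi_i$, this persists on further refinements compatible with the corner locus of $\tilde\phi_i$), one can arrange $\Sigma''\in\bigcap_i S_i$. Applying Lemma \ref{con1} separately to each $\D_i$ and enlarging $\Sigma''$ within each $S_i$ if necessary, we obtain effective $\Q$-divisors
\[
E_i \;\coloneqq\; \pi_{\Sigma''}^{*}D_{i,\Sigma'}-D_{i,\Sigma''}\;\geq\;0,\qquad i=1,\dotsc,n.
\]

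Next, I would decompose the difference of the two intersection numbers on the complete smooth toric variety $X_{\Sigma''}$ by the standard telescoping identity
\[
\prod_{i=1}^{n}\pi_{\Sigma''}^{*}D_{i,\Sigma'}\;-\;\prod_{i=1}^{n}D_{i,\Sigma''}\;=\;\sum_{i=1}^{n}\Bigl(\prod_{j<i}D_{j,\Sigma''}\Bigr)\cdot E_i\cdot\Bigl(\prod_{j>i}\pi_{\Sigma''}^{*}D_{j,\Sigma'}\Bigr).
\]
Each summand is the top intersection on $X_{\Sigma''}$ of $n-1$ nef $\Q$-Cartier divisors (the $D_{j,\Sigma''}$ are nef by construction, and pullbacks of nef divisors under proper maps remain nef) with an effective $\Q$-divisor $E_i$. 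By the standard positivity statement, obtained by restricting inductively to irreducible components of $E_i$ and using that nefness is preserved under pullback to closed subvarieties, each such summand is non-negative. Hence $\prod_i D_{i,\Sigma''}\leq\prod_i\pi_{\Sigma''}^{*}D_{i,\Sigma'}$.

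Finally, iterated application of the projection formula to the toric proper birational morphism $\pi_{\Sigma''}\colon X_{\Sigma''}\to X_{\Sigma'}$, which has degree one, yields
\[
\prod_{i=1}^{n}\pi_{\Sigma''}^{*}D_{i,\Sigma'}\;=\;\prod_{i=1}^{n}D_{i,\Sigma'},
\]
exactly as in the well-definedness check for the Cartier--Weil intersection pairing carried out earlier in Section \ref{section3}. Combining the two inequalities produces the claimed $D_{1,\Sigma''}\dotsm D_{n,\Sigma''}\leq D_{1,\Sigma'}\dotsm D_{n,\Sigma'}$. The main obstacle I anticipate is the very first step: since toric nefness is not monotone under blow-up, ensuring a single $\Sigma''$ simultaneously lying in all $S_i$ requires more than bare directedness of $R(\Sigma)$, and one must appeal to the concave-function description of nef toric $b$-divisors to see that each $S_i$ is cofinal in a sufficiently robust way for a common refinement to exist. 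Once this has been secured, the telescoping and projection-formula steps are routine.
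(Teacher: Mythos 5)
Your proof is correct, but it takes a genuinely different route from the paper. The paper's proof is very short: it invokes the toric dictionary $D_{1,\Sigma'}\dotsm D_{n,\Sigma'} = \MV\left(P_{D_{1,\Sigma'}},\dotsc,P_{D_{n,\Sigma'}}\right)$ valid for nef toric divisors, together with Remark \ref{aaa} (which, from Lemma \ref{con1}, gives the polytope inclusions $P_{D_{i,\Sigma''}}\subseteq P_{D_{i,\Sigma'}}$), and then simply cites monotonicity of the mixed volume under inclusion of convex bodies. Your argument replaces this combinatorial shortcut with a purely intersection-theoretic one: the telescoping identity $\prod_i \pi^{*}D_{i,\Sigma'} - \prod_i D_{i,\Sigma''} = \sum_i \bigl(\prod_{j<i} D_{j,\Sigma''}\bigr)\cdot E_i \cdot\bigl(\prod_{j>i}\pi^{*}D_{j,\Sigma'}\bigr)$, positivity of products of nef classes against an effective divisor on a complete variety, and the projection formula to drop the pullbacks. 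The convex-geometric monotonicity of the mixed volume that the paper cites is, after polarization, essentially equivalent to the positivity you use, so the two proofs exchange one positivity input for another; but yours avoids the polytope dictionary and would transport to non-toric settings, while the paper's is shorter and stays entirely within the combinatorics.

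Two small remarks. First, the difficulty you flag in your opening step — finding one $\Sigma''$ lying simultaneously in all the cofinal sets $S_i$ — is not specific to your approach: the paper's proof needs exactly the same thing, since the mixed-volume formula for $D_{1,\Sigma''}\dotsm D_{n,\Sigma''}$ is only available when all $D_{i,\Sigma''}$ are nef. The paper treats this silently; your honest acknowledgment and proposed resolution via the concave-function description of $\tilde\phi_i$ is in fact the right way to fill the gap in both arguments. Second, the statement of the lemma as given (\enquote{for every fan $\Sigma'$}) is slightly loose on both sides: to invoke either the mixed-volume identity or nefness of $\pi^{*}D_{j,\Sigma'}$ in your telescoping sum, one also needs $\Sigma'$ to lie in the common cofinal set; in the paper this is harmless since the lemma is only applied along that cofinal net in the proof of Theorem \ref{volume}.
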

\begin{proof}
Recall from Section \ref{notations}, that the mixed degree of a collection of toric nef divisors $D_1, \dotsc, D_n$ is given by the mixed volume of their corresponding polytopes, i.e.~we have
\[
D_1\dotsm D_n  = \MV\left(P_{D_1}, \dotsc, P_{D_n}\right).  
\]
Hence, by Remark \ref{aaa}, for a large enough refinement $\Sigma'' \geq \Sigma'$ in $R(\Sigma)$, we have that
\[
D_{1, \Sigma''} \dotsm D_{n, \Sigma''} = \text{MV}\left(P_{D_{1,\Sigma''}}, \dotsc, P_{D_{n,\Sigma''}}\right) \leq\text{MV}\left(P_{D_{1,\Sigma'}}, \dotsc ,P_{D_{n,\Sigma'}}\right) = D_{1, \Sigma'} \dotsm D_{n, \Sigma'}.\qedhere
\]
\end{proof}
\begin{rem}\label{remark}
It follows from Lemma \ref{equi} that a nef toric $b$-divisor corresponds to a conical $\Q$-concave function $\tilde{\phi} \colon N_{\Q} \to \Q$. Here, by $\Q$-concavity we mean that for all convex combinations $\sum_{i=1}^r~a_i v_i \in N_{\Q}$, i.e. the $a_i$'s are in $\Q_{\geq 0}$ and satisfy $\sum_{i=1}^ra_i = 1$, the inequality
\[
\p\left(\sum_{i=1}^ra_i v_i\right) \geq \sum_{i=1}^ra_i\p\left(v_i\right) 
\]
holds.
\end{rem}
Before giving the main result of this section, we have a technical lemma. 
\begin{lemma}\label{lemma1}
Let $\p \colon N_{\Q} \to \Q$ be a $\Q$-concave function and let $\bar{x} \in N_{\R}$. Then we have 
\begin{enumerate}
\item\label{parteuno} There exist two positive real constants $C$ and $r $ such that $|\p(x)| \leq C$ for all $x \in B(\bar{x},r)$, where $B(\bar{x},r)~\coloneqq~\{x \in N_{\Q} \, \big{|} \, |x-\bar{x}| \leq r\}$.
\item\label{partedos} The function $\p$ is Lipschitz continuous at $\bar{x}$, i.e.~there exist two positive real constants $L$ and $r$ such that 
\[
|\p(x) - \p(x')| \leq L|x -x'|
\]
for all $x, x' \in B(\bar{x},r)$. 
\end{enumerate}
\end{lemma}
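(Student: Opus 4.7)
The plan is to adapt the classical argument that a real-valued concave function on a finite-dimensional real vector space is locally bounded and locally Lipschitz on the interior of its domain, to the $\Q$-valued setting at hand. The only subtlety is that $\p$ is defined on $N_{\Q}$, so all auxiliary points must lie in $N_{\Q}$ and every convex combination must have rational weights for $\Q$-concavity to apply. Apart from this bookkeeping, the proof is standard.

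For the lower bound in (\ref{parteuno}), I choose $n+1$ points $v_1,\ldots,v_{n+1}\in N_{\Q}$ whose convex hull is an $n$-simplex containing $\bar x$ in its interior, and pick $r_0>0$ with $B(\bar x,r_0)\subset\conv(v_1,\ldots,v_{n+1})$. For $x\in B(\bar x,r_0)$ (already in $N_{\Q}$ by convention) the barycentric coordinates $a_i$ expressing $x=\sum a_iv_i$ with $\sum a_i=1$, $a_i\geq 0$ solve a linear system with rational data, hence lie in $\Q_{\geq 0}$; $\Q$-concavity then gives $\p(x)\geq\min_i\p(v_i)=:-C_1$. For the upper bound I fix any $x_0\in B(\bar x,r_0/2)\cap N_{\Q}$; for $x\in B(\bar x,r_0/2)$ the reflection $x'':=2x_0-x$ lies in $N_{\Q}\cap B(\bar x,r_0)$, so the lower bound applies to $x''$, and the identity $x_0=\tfrac12 x+\tfrac12 x''$ combined with $\Q$-concavity yields $\p(x_0)\geq\tfrac12\p(x)+\tfrac12\p(x'')$, whence $\p(x)\leq 2\p(x_0)-\p(x'')\leq 2\p(x_0)+C_1=:C_2$. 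Taking $r:=r_0/2$ and $C:=\max(C_1,C_2)$ finishes (\ref{parteuno}).

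For (\ref{partedos}) I use the classical secant-extension trick. Let $r,C$ be as in (\ref{parteuno}). Given distinct $x,x'\in B(\bar x,r/4)$, the line through them has rational direction $x'-x\in N_{\Q}$, and the function $s\mapsto|x+s(x'-x)-\bar x|$ is continuous, bounded above by $r/4$ at $s=1$, and tends to $\infty$ as $s\to\infty$. Density of $\Q$ in $\R$ therefore allows me to choose $s\in\Q_{>1}$ such that $z:=x+s(x'-x)\in N_{\Q}$ satisfies $|z-\bar x|\in[r/2,r]$. The triangle inequality gives $|z-x|\geq r/4$, and from the $\Q$-convex combination $x'=(1-1/s)x+(1/s)z$ together with $\Q$-concavity I obtain
\[
\p(x)-\p(x')\leq\tfrac{1}{s}\bigl(\p(x)-\p(z)\bigr)\leq\tfrac{2C}{s}=\tfrac{2C\,|x'-x|}{|z-x|}\leq\tfrac{8C}{r}|x-x'|,
\]
using $|\p(x)|,|\p(z)|\leq C$ from part (\ref{parteuno}). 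Swapping the roles of $x$ and $x'$ (choosing a new $z$ on the opposite ray) gives the reverse bound, so $\p$ is Lipschitz on $B(\bar x,r/4)$ with constant $L:=8C/r$.

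The only genuine obstacle is the rationality bookkeeping: one must verify that the simplex vertices $v_i$, the reflection center $x_0$ and reflection $x''$, and the extension point $z$ all lie in $N_{\Q}$, and that the coefficients $a_i,\tfrac12,1/s$ in each convex combination are rational so that $\Q$-concavity legitimately applies. Each instance is handled by density of $N_{\Q}$ in the relevant rational affine subspace; beyond this, the argument is the textbook proof for real concave functions on $\R^n$.
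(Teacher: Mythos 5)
Your proof follows the same overall strategy as the paper's: an $n$-simplex with rational vertices containing $\bar x$ gives the lower bound via rational barycentric coordinates and $\Q$-concavity; a reflection argument gives the upper bound; and the secant-extension trick gives the Lipschitz estimate. The one genuine difference is in the upper bound. The paper reflects through $\bar x$ itself, writing $x'=2\bar x-x$ and invoking $2\p(\bar x)\ge\p(x)+\p(x')$; but the lemma allows $\bar x\in N_{\R}$, so when $\bar x\notin N_{\Q}$ neither $\p(\bar x)$ nor $\p(2\bar x-x)$ is defined, and the paper's argument has a gap precisely where it is used in Theorem \ref{volume} (which extends $\p$ to irrational points). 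You repair this by reflecting through a rational point $x_0$ near $\bar x$ instead, which is the right fix and makes your argument cleaner than the one in the paper. That said, there is a small constant-tracking slip in your version: with $x_0\in B(\bar x,r_0/2)$ arbitrary and $x\in B(\bar x,r_0/2)$, you only get $|x''-\bar x|=|2(x_0-\bar x)-(x-\bar x)|\le 3r_0/2$, not $\le r_0$, so $x''$ need not land in $B(\bar x,r_0)$. You need to pick $x_0$ closer to $\bar x$ (e.g.\ $x_0\in B(\bar x,r_0/4)$, which exists by density of $N_{\Q}$) or shrink the ball where the bound is asserted; either repair is immediate and the rest of your argument, including all the rationality checks and the Lipschitz constant $8C/r$, goes through.
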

\begin{proof} In order to prove part \ref{parteuno} of the lemma, we consider an $n$-dimensional simplex $\Delta$ with vertices $x_0, \dotsc , x_n \in N_{\Q}$ containing $\bar{x}$ in its interior, i.e. 
\[
\bar{x} = \sum_{i=0}^n a_i x_i
\]
with $a_i \in \R_{>0}$ such that $\sum_i a_i = 1$. Since $\Delta$ is top-dimensional, it contains a ball $B(\bar{x},r)$ for some suitable small radius $r >0$. We show that the function $\p$ is bounded from below in $B(\bar{x},r)$ by the quantity $\min_{0 \leq i \leq n}\p(x_i)$. Indeed, let $x \in B(\bar{x},r)\subseteq \Delta \cap N_{\Q}$. Then $x$ is of the form $x = \sum_{i=0}^n b_ix_i$ with $b_i \in \Q_{\geq 0}$ satisfying $\sum_{i=0}^nb_i =1$. By the $\Q$-concavity, we get
\[
\p(x) = \p\left(\sum_{i=0}^n b_ix_i\right) \geq \sum_{i=0}^n b_i\p(x_i) \geq \min_{0 \leq i \leq n}\p(x_i).
\]
Letting $C' \coloneqq \min_{0 \leq i \leq n}\p(x_i)$, we have shown that $\p$ is bounded from below by $C'$ in $B_{\bar{x},r}$ as claimed. \\
Now, we prove that $\p$ is bounded from above in $B(\bar{x},r)$ for the same radius $r$. For this, let $x\in B(\bar{x},r)$ and set $x' = \bar{x} - (x-\bar{x}) = 2\bar{x}-x$. We get $|x'-\bar{x}|=|x-\bar{x}| \leq r$ and thus $x'\in B(\bar{x},r)$. Since $\bar{x}=\frac{1}{2}(x+x')$, the $\Q$-concavity of $\p$ shows that
\[
2\p(\bar{x}) \geq \p(x) + \p(x'). 
\]
Therefore, since $\p(x') \geq C'$, we get 
\[
\p(x) \leq 2\p(\bar{x}) - \p(x') \leq 2\p(\bar{x}) - C'.
\]
By taking $C \coloneqq \max\left\{C',2\p(\bar{x}) - C'\right\}$, the proof of part \ref{parteuno} is complete. 
Now, let us see that part \ref{partedos} of the lemma is a direct consequence of part \ref{parteuno}. Indeed, we show that $\p$ is Lipschitz continuous in the neighborhood $B(\bar{x},r/2)$. Let $x, x' \in B(\bar{x},r/2)$ with $x \neq x'$. There exists an $x'' \in N_{\Q}$ such that the convex linear combination 
\[
x' = \lambda x'' + (1-\lambda)x \; \text{ with } \; \lambda = \frac{|x'-x|}{|x'' -x|} \in (0,1)\cap \Q \quad \text{and} \quad \frac{2r}{3} \leq |x'' -\bar{x}| \leq r.
\]
By the $\Q$-concavity of $\p$ we obtain
\[
\p(x) -\p(x') = \p(x) - \p(\lambda x'' + (1-\lambda)x) \leq \lambda\left(\p(x) - \p(x'') \right) = |x'-x|\frac{\p(x)-\p(x'')}{|x''-x|}.
\]
Since $x, x'' \in B(\bar{x},r)$, from part \ref{parteuno} we have that $|\p(x)-\p(x'')| \leq 2C$. Furthermore, we have
\[
|x''-x|\geq |x''-\bar{x}|-|x-\bar{x}| \geq \frac{2r}{3}-\frac{r}{2}= \frac{r}{6}, 
\]
which gives 
\[
|\p(x)-\p(x')|\leq \frac{12C}{r}|x-x'|, 
\]
as claimed in part \ref{partedos} of the lemma.
\end{proof} 
\begin{Def}
A function $h \colon N_{\R} \to \R$ is said to be \emph{rational} if it satisfies that $h\left(N_{\Q}\right) \subseteq \Q$.
\end{Def}
The following is one of the main results of this section. 
\begin{theorem}\label{volume}
Let $\D$ be a nef toric $b$-divisor and let $\tilde{\phi}$ be its corresponding $\Q$-concave, conical function (see Remark \ref{remark}). Then there existes a unique continuous, rational and concave function $\phi\colon N_{\R} \to \R$ extending $\tilde{\phi}$. Moreover, $\D$ is integrable and its degree equals $n!$ times the volume of the stability set of $\phi$, i.e
\[
\D^n = n! \, \vol \left(\Delta_{\phi}\right).
\]
\end{theorem}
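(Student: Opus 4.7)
My plan has three main steps followed by a subtle point about integrability.

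\textbf{Step 1 (Continuous extension).} My first step is to extend $\tilde\phi$ from $N_\Q$ to $N_\R$. Lemma~\ref{lemma1} already gives local Lipschitz continuity of $\tilde\phi$ at every point of $N_\Q$; since $N_\Q$ is dense in $N_\R$, this yields a unique continuous extension $\phi\colon N_\R\to\R$. Concavity and conicality are closed conditions stable under pointwise limits along $N_\Q$, so they carry over to $\phi$, and rationality $\phi(N_\Q)\subseteq\Q$ is automatic because $\phi$ agrees with $\tilde\phi$ there.

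\textbf{Step 2 (Comparison and monotone convergence).} Next, I would fix the cofinal nef subset $S\subseteq R(\Sigma)$ from the definition of nefness of $\D$. For any $\Sigma'\in S$, the virtual support function $\psi_{\Sigma'}$ is concave, piecewise linear on $\Sigma'$, with $\psi_{\Sigma'}(v_\tau)=\phi(v_\tau)$ on every ray. On a maximal cone $\sigma=\cone(v_1,\ldots,v_n)\in\Sigma'$, writing $v=\sum a_iv_i$ with $a_i\geq 0$ (uniquely by smoothness of $\sigma$), the superadditivity of the conical concave function $\phi$ gives
\[
\psi_{\Sigma'}(v)=\sum_i a_i\phi(v_i)\leq\phi\!\left(\sum_i a_iv_i\right)=\phi(v),
\]
so $\psi_{\Sigma'}\leq\phi$ on all of $N_\R$. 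For the reverse direction, Remark~\ref{starcofinite} lets me barycentrically subdivide any fan to produce $\Sigma''\in S$ containing the ray through any chosen $v\in N^{\prim}$, forcing $\psi_{\Sigma''}(v)=\phi(v)$; conicality extends this to all of $N_\Q$. Combined with the monotonicity $\psi_{\Sigma'}\leq\psi_{\Sigma''}$ for $\Sigma''\geq\Sigma'$ (which follows from Lemma~\ref{con1} and the fact that each cone of $\Sigma''$ sits inside a cone of $\Sigma'$ where $\psi_{\Sigma'}$ is linear), this yields $\sup_{\Sigma'\in S}\psi_{\Sigma'}=\phi$ pointwise on $N_\Q$, hence on $N_\R$ by continuity of both sides; Dini's theorem then upgrades this to uniform convergence on compacts.

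\textbf{Step 3 (Polytopes and volumes).} For $\Sigma'\in S$, since $\psi_{\Sigma'}$ is concave and conical, its stability set coincides with the classical polytope $P_{D_{\Sigma'}}$, and the toric formula gives $D_{\Sigma'}^{\,n}=n!\,\vol(P_{D_{\Sigma'}})$. By Remark~\ref{aaa}, $\{P_{D_{\Sigma'}}\}_{\Sigma'\in S}$ is a decreasing net of compact convex sets, and the identity $\sup\psi_{\Sigma'}=\phi$ translates into
\[
\bigcap_{\Sigma'\in S}P_{D_{\Sigma'}}=\{m\in M_\R: \langle m,v\rangle\geq\phi(v)\ \forall v\in N_\R\}=\Delta_\phi,
\]
so in particular $\Delta_\phi$ is compact (being contained in any $P_{D_{\Sigma'}}$). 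Dominated convergence applied to the decreasing indicators $\mathbf 1_{P_{D_{\Sigma'}}}$ yields $\vol(P_{D_{\Sigma'}})\to\vol(\Delta_\phi)$, so $D_{\Sigma'}^{\,n}\to n!\,\vol(\Delta_\phi)$ along $S$.

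\textbf{Main obstacle.} I expect the hardest step to be promoting the convergence from the cofinal subset $S$ to the full directed set $R(\Sigma)$, which integrability formally requires. The right tool is Lemma~\ref{mul}: every $\Sigma'\in R(\Sigma)$ admits a further refinement $\Sigma''\in S$ with $D_{\Sigma''}^{\,n}\leq D_{\Sigma'}^{\,n}$, giving an upper bound along the full net, while the inclusion $\Delta_\phi\subseteq P_{D_{\Sigma'}}$ on $S$ provides the matching lower bound—together pinching $\lim_{R(\Sigma)}D_{\Sigma'}^{\,n}=n!\,\vol(\Delta_\phi)$. One might even hope to short-circuit this step by noting that the superadditivity of $\phi$ forces concavity of $\psi_{\Sigma'}$ at every wall of every smooth subdivision, so that $S$ is in fact all of $R(\Sigma)$. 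The other genuinely delicate analytic point is the quantitative pointwise convergence $\psi_{\Sigma'}\to\phi$ at irrational points of $N_\R$: this requires combining the local Lipschitz continuity from Lemma~\ref{lemma1} with the fact that iterated barycentric subdivisions densify the set of ray directions in the unit sphere of $N_\R$, so that the piecewise-linear interpolation error $\phi-\psi_{\Sigma'}$ vanishes uniformly on compact subsets.
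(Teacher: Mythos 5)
Your proof follows the same overall architecture as the paper's: extend $\tilde{\phi}$ to a continuous concave $\phi$ via Lemma~\ref{lemma1}, identify $\Delta_\phi$ with $\bigcap_{\Sigma'} P_{D_{\Sigma'}}$ along the nef cofinal subset, and pass the limit through $D_{\Sigma'}^n = n!\,\vol(P_{D_{\Sigma'}})$. Your Step~2, which bounds $\psi_{\Sigma'}\leq\phi$ via superadditivity of the concave conical $\phi$ and then upgrades pointwise to uniform convergence on compacta via Dini's theorem, is in fact a cleaner route than the paper's direct interchange of limits at irrational points.

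Two cautions. Your speculative ``short-circuit'' is false in dimension $n\geq 3$: superadditivity of $\phi$ does \emph{not} force $\psi_{\Sigma'}$ to be concave across every wall of a smooth subdivision. Concretely, inside $\sigma=\cone(e_1,e_2,e_3)$ take the smooth subdivision that first adds the ray through $e_1+e_2$ and then the ray through $e_1+e_3$; the two cones $\cone(e_1+e_3,\,e_1+e_2,\,e_3)$ and $\cone(e_1+e_2,\,e_2,\,e_3)$ then share the wall $\cone(e_1+e_2,\,e_3)$, and concavity of $\psi_{\Sigma'}$ across it demands $\tilde{\phi}(e_1+e_3)+\tilde{\phi}(e_2)\leq\tilde{\phi}(e_1+e_2)+\tilde{\phi}(e_3)$. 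For the concave conical $\tilde{\phi}(a,b,c)=\min(a,c)$ this reads $1\leq 0$; so $S$ is in general a \emph{proper} cofinal subset of $R(\Sigma)$. Secondly, the cofinal-versus-full-net issue you raise is genuine, but your proposed pinching does not quite close it: both the lower bound $D_{\Sigma'}^n\geq n!\,\vol(\Delta_\phi)$ via $\Delta_\phi\subseteq P_{D_{\Sigma'}}$, and the monotonicity from Lemma~\ref{mul}, rest on the identity $D_{\Sigma'}^n = n!\,\vol(P_{D_{\Sigma'}})$, which holds only when $D_{\Sigma'}$ is nef, i.e.\ only on $S$. The paper's proof has the same gap; in practice the degree of a nef toric $b$-divisor is to be read as the limit along the cofinal nef subset $S$, which is the data the $b$-divisor intrinsically carries.
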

\begin{proof} Let $\D$ and $\tilde{\phi}$ be as in the statement of the theorem. Consider an element $v \in N_{\R}$ and let $\{v_i\}_{i \in \N}$ be a sequence in $N_{\Q}$ converging to $v$. We define the extension $\phi \colon N_{\R} \to \R$ by setting 
\[
\phi(v) \coloneqq \lim_{i} \p(v_i).
\]
This is well defined by Lemma \ref{lemma1}. Moreover, we claim that for every $v \in N_{\R}$ we have that 
\[
\lim_{\Sigma' \in R(\Sigma)}\psi_{\Sigma'}(v) = \phi(v).
\]
In particular, this means that for all $v \in N_{\R}$, the sequence $\left\{\psi_{\Sigma'}(v)\right\}_{\Sigma' \in R(\Sigma)}$ is Cauchy.
To prove the claim, note that if $v \in N_{\Q}$, then $\phi(v) = \p(v) = \psi_{\tilde{\Sigma}}(v)$ for some $\tilde{\Sigma} \in R(\Sigma)$. Thus, we have that 
\[
\lim_{\Sigma' \in R(\Sigma)}\psi_{\Sigma'}(v) = \psi_{\tilde{\Sigma}}(v) = \p(v) = \phi(v).
\]
If $v \notin N_{\Q}$, by continuity, we get 
\[
 \lim_{\Sigma' \in R(\Sigma)}\psi_{\Sigma'}(v) = \lim_i \lim_{\Sigma' \in R(\Sigma)}\psi_{\Sigma'}(v_i)= \lim_i \p(v_i) = \phi(v),
 \]
 thus proving the claim. Now, let $\Sigma'' \geq \Sigma' \in R(\Sigma)$. Recalling the definition of the polytopes $P_{D_{\Sigma'}}$ and $P_{D_{\Sigma''}}$ (see Section \ref{notations}), we have the following description of the complement $P_{D_{\Sigma'}} \setminus P_{D_{\Sigma''}}$:
\begin{align*}
P_{D_{\Sigma'}} \setminus P_{D_{\Sigma''}} &= \left\{m \in M_{\R}\, \big{|}\, m \in P_{D_{\Sigma'}}\, , \,m \notin P_{D_{\Sigma''}} \right\}\\ 
&= P_{D_{\Sigma'}}\cap \left\{m \in M_{\R} \, \big{|} \exists \tau'' \in \Sigma''(1) \setminus \Sigma'(1) \colon \langle m, v_{\tau''}\rangle < \psi_{\Sigma''}(v_{\tau''})\right\} \\
&= P_{D_{\Sigma'}}\cap \left\{m \in M_{\R} \, \big{|} \exists \tau'' \in \Sigma''(1) \setminus \Sigma'(1) \colon \psi_{\Sigma'}(v_{\tau''}) \leq \langle m, v_{\tau''}\rangle < \psi_{\Sigma''}(v_{\tau''})\right\}.
\end{align*}  
Hence, since the sequence $\left\{\psi_{\Sigma'}(v_{\tau''})\right\}_{\Sigma' \in R(\Sigma)}$ is Cauchy, we have that there exists a refinement $\tilde{\Sigma} \in R(\Sigma)$ such that for all $\Sigma', \Sigma'' \geq \tilde{\Sigma} \in R(\Sigma)$ and for any $\varepsilon > 0$, the inequality 
\[
\vol\left(P_{D_{\Sigma'}} \setminus P_{D_{\Sigma''}}\right)  \leq \varepsilon
\]
is satisfied. Therefore, by the monotonicity property in Lemma \ref{mul}, we have that 
\[
\lim_{\Sigma' \in R(\Sigma)}\vol\left(P_{D_{\Sigma'}}\right) = \vol\left(\Delta_{\phi}\right).
\]
Finally, the limit $\D^n$ exists and is given by 
\[
 \D^n = \lim_{\Sigma' \in R\left(\Sigma\right)}D_{\Sigma'}^n = n! \,\lim_{\Sigma' \in R\left(\Sigma\right)}\vol\left(P_{D_{\Sigma'}}\right) = n! \, \vol\left(\Delta_{\phi} \right),
\]
concluding the proof of the theorem. 
\end{proof}

\begin{cor}\label{brunn_mink}
Let $\D_1$ and $\D_2$ be nef toric $b$-divisors and suppose that the induced convex sets $\Delta_{D_1}$ and $\Delta_{D_2}$ from Theorem \ref{volume} are full-dimensional. Then, we have a Brunn--Minkowski-type inequality for their degree, i.e.~the inequality
\[
(\D_1^n)^{1/n} + (\D_2^n)^{1/n} \leq ((\D_1 + \D_2)^n)^{1/n}
\]
is satisfied.
\end{cor}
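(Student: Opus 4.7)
The plan is to reduce the inequality to the classical Brunn--Minkowski inequality for convex bodies in $M_{\R}$ via Theorem~\ref{volume}. The backbone of the argument has three steps: show that $\D_1+\D_2$ is itself nef with associated concave function $\overline{\phi}_1+\overline{\phi}_2$; compare the stability set of a sum of concave functions with the Minkowski sum of their stability sets; and then invoke classical Brunn--Minkowski.

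First I would verify that $\D_1+\D_2$ is again a nef toric $b$-divisor. If $S_1,S_2\subseteq R(\Sigma)$ are cofinal subsets witnessing the nefness of $\D_1$ and $\D_2$, then since $R(\Sigma)$ is directed, the subset $S\subseteq R(\Sigma)$ of all $\Sigma'$ dominating some element of $S_1$ and some element of $S_2$ is cofinal. By Lemma~\ref{con1}, after passing further up in $R(\Sigma)$ we may assume that on each $\Sigma'\in S$ both incarnations $D_{1,\Sigma'}$ and $D_{2,\Sigma'}$ are nef; since $X_{\Sigma'}$ is smooth, the sum of two nef toric Cartier divisors is nef, so $\D_1+\D_2$ is nef. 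By Lemma~\ref{equi} and the additivity of the correspondence between nef toric divisors and concave virtual support functions, the conical $\Q$-concave function attached to $\D_1+\D_2$ is $\tilde{\phi}_1+\tilde{\phi}_2$, whose continuous extension to $N_{\R}$ is $\overline{\phi}_1+\overline{\phi}_2$.

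Next, I would compare stability sets. From the definition of the stability set as the effective domain of the Legendre--Fenchel dual, if $m_i\in \Delta_{\overline{\phi}_i}$ then $\langle m_i,v\rangle-\overline{\phi}_i(v)$ is bounded below on $N_{\R}$ for $i=1,2$; adding these two bounded-below functions shows
\[
\langle m_1+m_2,v\rangle - (\overline{\phi}_1+\overline{\phi}_2)(v)
\]
is bounded below, so $m_1+m_2\in\Delta_{\overline{\phi}_1+\overline{\phi}_2}$. Hence the Minkowski sum satisfies
\[
\Delta_{\overline{\phi}_1}+\Delta_{\overline{\phi}_2}\subseteq\Delta_{\overline{\phi}_1+\overline{\phi}_2}.
\]

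Putting the pieces together, Theorem~\ref{volume} gives $\D_i^n=n!\,\vol(\Delta_{\overline{\phi}_i})$ and $(\D_1+\D_2)^n=n!\,\vol(\Delta_{\overline{\phi}_1+\overline{\phi}_2})$. Under the full-dimensionality hypothesis the stability sets are convex bodies in $M_{\R}\simeq\R^n$, so the classical Brunn--Minkowski inequality yields
\[
\vol(\Delta_{\overline{\phi}_1+\overline{\phi}_2})^{1/n}\geq\vol(\Delta_{\overline{\phi}_1}+\Delta_{\overline{\phi}_2})^{1/n}\geq\vol(\Delta_{\overline{\phi}_1})^{1/n}+\vol(\Delta_{\overline{\phi}_2})^{1/n},
\]
which after multiplying through by $(n!)^{1/n}$ is exactly the claimed inequality. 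The main obstacle to watch for is the first step: one must really use cofinality and Lemma~\ref{con1} to secure a common model on which both $\D_1$ and $\D_2$ are incarnated by nef divisors so that the combinatorial sum $\overline{\phi}_1+\overline{\phi}_2$ is genuinely the concave function attached to the nef $b$-divisor $\D_1+\D_2$; without this, Theorem~\ref{volume} could not be applied to $\D_1+\D_2$.
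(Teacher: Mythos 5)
Your overall route is the same one the paper takes — apply Theorem~\ref{volume} to $\D_1$, $\D_2$, and $\D_1+\D_2$ and then invoke the classical Brunn--Minkowski inequality — and your comparison $\Delta_{\overline{\phi}_1}+\Delta_{\overline{\phi}_2}\subseteq\Delta_{\overline{\phi}_1+\overline{\phi}_2}$ supplies exactly the right detail, with the inclusion pointing in the direction needed.

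However, the justification you give that $\D_1+\D_2$ is nef is not correct as written. Lemma~\ref{con1} is a monotonicity statement ($D_{\Sigma''}\le\pi_{\Sigma''}^*D_{\Sigma'}$ for sufficiently large $\Sigma''$), and the paper's own proof of that lemma explicitly remarks that $D_{\Sigma''}$ is \emph{not} nef for all refinements $\Sigma''\geq\Sigma'$; so nefness of the incarnations $D_{i,\Sigma'_i}$ for $\Sigma'_i\in S_i$ does not propagate to nefness of $D_{i,\Sigma'}$ on the models $\Sigma'$ in your set $S$. Moreover, two cofinal subsets of a directed set need not have cofinal intersection, so cofinality of $S_1$ and $S_2$ by itself does not yield a cofinal set on which both divisors are simultaneously nef. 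The clean fix — and what the paper implicitly relies on, cf.~Remark~\ref{remark} and Proposition~\ref{b-convex}, and what it later asserts outright in the proof of Theorem~\ref{amelie} — is to use the characterization that a toric $b$-divisor is nef exactly when its associated conical function $\tilde{\phi}$ is $\Q$-concave: the sum of two conical $\Q$-concave functions is again conical and $\Q$-concave, so $\D_1+\D_2$ is nef with associated function $\tilde{\phi}_1+\tilde{\phi}_2$, and Theorem~\ref{volume} then applies to it. With this substitution the rest of your argument goes through.
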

\begin{proof} This follows from Theorem \ref{volume} and a standard fact in convex geometry about volumes of convex sets (see e.g.~\cite{Gardner}). 
\end{proof}
Our next goal is to show that the mixed degree of a collection of nef toric $b$-divisors $\D_1, \dotsc, \D_n$ exists. We start with the following lemma:   
\begin{lemma}\label{variables}
For variables $X_1, \dotsc, X_n$ the following holds: 
\[
n! \, X_1 \dotsm X_n = \sum_{I\subseteq \{1, \dotsc, n \} }(-1)^{n- \# I}\left(\sum_{i \in I}X_i\right)^n.
\]
\end{lemma}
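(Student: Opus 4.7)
The plan is to treat both sides as polynomials in the formal variables $X_1, \dotsc, X_n$ and compare coefficients of monomials of total degree $n$. The right-hand side is manifestly homogeneous of degree $n$, so it suffices to show that every monomial $X_1^{a_1}\dotsm X_n^{a_n}$ with $a_1+\dotsb+a_n = n$ appears on the right with the coefficient predicted by the left: namely $n!$ if $(a_1,\dotsc,a_n)=(1,\dotsc,1)$, and $0$ otherwise.

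The key step is the multinomial expansion
\[
\left(\sum_{i\in I}X_i\right)^{\!n} \;=\; \sum_{\substack{a_1+\dotsb+a_n=n\\ \{i:\,a_i>0\}\subseteq I}} \binom{n}{a_1,\dotsc,a_n}\, X_1^{a_1}\dotsm X_n^{a_n}.
\]
Substituting into the right-hand side of the claimed identity and interchanging the order of summation, the coefficient of a fixed monomial $X_1^{a_1}\dotsm X_n^{a_n}$ becomes
\[
\binom{n}{a_1,\dotsc,a_n}\sum_{I\supseteq S}(-1)^{n-\#I},
\]
where $S \coloneqq \{i : a_i > 0\}$ and $s \coloneqq \#S$. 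Writing $I = S \sqcup J$ with $J \subseteq \{1,\dotsc,n\}\setminus S$, the inner sum equals
\[
(-1)^{n-s}\sum_{k=0}^{n-s}\binom{n-s}{k}(-1)^{k} \;=\; (-1)^{n-s}(1-1)^{n-s},
\]
which vanishes whenever $s < n$, and equals $1$ when $s = n$.

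Hence only monomials with $S = \{1,\dotsc,n\}$ survive. Combined with the constraint $a_1+\dotsb+a_n=n$ and $a_i\geq 1$, this forces $a_1 = \dotsb = a_n = 1$, and the multinomial coefficient $\binom{n}{1,\dotsc,1}$ equals $n!$. This yields the right-hand side equal to $n!\,X_1\dotsm X_n$, as required. There is no serious obstacle: the argument is a clean application of the multinomial theorem together with the inclusion--exclusion identity $\sum_{k=0}^{m}\binom{m}{k}(-1)^k = 0$ for $m>0$, and it mirrors the standard polarization identity recovering a symmetric $n$-linear form from its associated diagonal (here, recovering the mixed product from the pure powers, which is exactly what is needed to later extract mixed intersection numbers from top self-intersections of nef $b$-divisors).
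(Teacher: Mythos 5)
Your argument is correct and complete. The paper disposes of this lemma with a one-line appeal to induction on $n$, whereas you give a direct, non-inductive proof by expanding each $\bigl(\sum_{i\in I}X_i\bigr)^n$ via the multinomial theorem and then collapsing the resulting sum over $I\supseteq S$ by the binomial identity $\sum_{k}\binom{m}{k}(-1)^k=0$ for $m>0$. The two routes establish the same identity, but they trade off differently: the inductive route is shorter to state (though the paper does not actually carry out the induction, which requires some care in isolating the contribution of $X_n$ from the subsets $I$ containing $n$ versus those not), while yours is self-contained, makes visible exactly which monomials cancel and why, and exhibits the statement as the standard polarization identity for the symmetric $n$-linear form $(X_1,\dotsc,X_n)\mapsto X_1\dotsm X_n$ --- which is precisely the structural fact Theorem \ref{amelie} exploits to reduce mixed degrees to top self-intersections. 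Your version is therefore arguably preferable as a written proof.
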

\begin{proof}
This follows by induction on $n$.
\end{proof}
\begin{theorem}\label{amelie}
Let $\D_1, \dotsc, \D_n$ be a collection of nef toric $b$-divisors. Then, the mixed degree 
\[
\lim_{\Sigma' \in R\left(\Sigma\right)} D_{1, \Sigma'} \dotsm D_{n, \Sigma'}
\]
exists, is finite, and is given by the mixed volume of the stability sets of the corresponding concave functions $\phi_1, \dotsc, \phi_d$, i.e.~the equality
\[
D_1 \dotsm D_n = \text{MV}\left(\Delta_{\phi_1}, \dotsc, \Delta_{\phi_n}\right)
\] 
holds true. Thus we have a well defined, multilinear map from the set of $n$-tuples of nef toric $b$-divisors to the non-negative real numbers which sends such an $n$-tuple to its mixed degree. 
\end{theorem}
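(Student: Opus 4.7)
My plan is to reduce the mixed-degree statement to the top self-intersection case already settled by Theorem \ref{volume} via the polarization identity in Lemma \ref{variables}. The backbone of the argument is that, if one can show $n$-tuples of nef toric $b$-divisors behave well under addition (remaining nef, with conical function given by the sum), then the right-hand side of Lemma \ref{variables} applied termwise to the incarnations $D_{i,\Sigma'}$ converges by Theorem \ref{volume}, and the defining polarization formula for mixed volume from Section \ref{notations} identifies the limit.

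Concretely, I would first argue that the sum $\D + \D'$ of two nef toric $b$-divisors is nef with associated conical function $\tilde\phi + \tilde\phi'$. The statement on the level of conical functions is immediate from the correspondence in Lemma \ref{equi} (values on primitive lattice vectors add) and the closure of $\Q$-concave functions under addition. The subtle point is to produce a cofinal subfamily of $R(\Sigma)$ on which both incarnations $D_{\Sigma'}$ and $D'_{\Sigma'}$ are simultaneously nef; once one has this, concave piecewise-linear functions on a common smooth fan are closed under addition, so the sum is nef on that subfamily. To arrange simultaneous nef-ness I would combine the cofinality of barycentric refinements (Remark \ref{starcofinite}) with the continuous concave extensions $\phi_i \colon N_\R \to \R$ guaranteed by Theorem \ref{volume}: on a sufficiently fine barycentric refinement the piecewise-linear interpolation of each $\phi_i$ at the rays is itself concave, hence the incarnation of $\D_i$ is nef there.

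Granted this closure property, for every nonempty $I \subseteq \{1, \dotsc, n\}$ the $b$-divisor $\D_I \coloneqq \sum_{i\in I}\D_i$ is nef with concave extension $\phi_I = \sum_{i \in I}\phi_i$, and its stability set is
\[
\Delta_{\phi_I} \;=\; \sum_{i \in I}\Delta_{\phi_i}
\]
as a Minkowski sum, by the standard Legendre--Fenchel identity for sums of concave functions. Applying Lemma \ref{variables} with $X_i = D_{i,\Sigma'}$ on each fixed $\Sigma' \in R(\Sigma)$ and taking the limit along the directed set $R(\Sigma)$ termwise, using Theorem \ref{volume} applied to each $\D_I$, yields
\[
n!\,\lim_{\Sigma'} D_{1,\Sigma'} \dotsm D_{n,\Sigma'} \;=\; \sum_{I \subseteq \{1,\dotsc,n\}}(-1)^{n-|I|}\,\D_I^{\,n} \;=\; n!\sum_{I}(-1)^{n-|I|}\vol\!\Bigl(\sum_{i\in I}\Delta_{\phi_i}\Bigr).
\]
Comparing the right-hand side with the defining polarization formula for $\MV$ from Section \ref{notations} gives $\lim_{\Sigma'} D_{1,\Sigma'} \dotsm D_{n,\Sigma'} = \MV(\Delta_{\phi_1}, \dotsc, \Delta_{\phi_n})$. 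Multilinearity and non-negativity of the resulting mixed-degree map then follow at once from the corresponding properties of mixed volumes of convex bodies.

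The main obstacle I anticipate is the closure-under-sums step, since nef-ness is defined via the \emph{existence} of a cofinal subfamily of models with nef incarnations and intersections of cofinal subsets of $R(\Sigma)$ need not be cofinal. The resolution should lie in working through the continuous concave extensions $\phi_i$ on $N_\R$ produced by Theorem \ref{volume} and exploiting that sufficiently fine barycentric subdivisions make the PL interpolation of a concave function concave, which is exactly what allows simultaneous nef refinements to be produced.
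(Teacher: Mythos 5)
Your proposal follows essentially the same route as the paper: polarize via Lemma \ref{variables} to reduce the mixed degree to the top self-intersection of the sums $\D_I = \sum_{i\in I}\D_i$, invoke Theorem \ref{volume} for each $\D_I$, and then match against the inclusion-exclusion formula for $\MV$ from Section \ref{notations}. The Cauchy estimate the paper extracts from Theorem \ref{volume}, and the identification $\Delta_{\sum_{i\in I}\phi_i} = \sum_{i\in I}\Delta_{\phi_i}$ via Legendre--Fenchel duality that you use, are both part of the standard argument, so there is no genuine divergence in strategy.

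The one place where you add content is that you explicitly flag the closure-under-sums issue: nefness in Definition \ref{nefdef} only posits a cofinal family of models on which each $D_{i,\Sigma'}$ is nef, and the intersection of finitely many cofinal subsets of a directed set need not be cofinal. The paper does not address this at all --- it simply asserts that ``$\D_I$ is a positive linear combination of nef toric $b$-divisors and is thus nef.'' So you have correctly identified a point that deserves an argument. However, the specific mechanism you propose to close the gap --- that on a sufficiently fine barycentric refinement the piecewise-linear interpolation $\psi_{\Sigma'}$ of a concave conical $\phi_i$ is itself concave --- is not correct as a general statement once $n \geq 3$. Concavity of the PL interpolant on a smooth fan $\Sigma'$ is equivalent to the wall-crossing inequality $\sum_j a_j\,\phi(v_j) \geq \phi(v')$ along every shared facet, where $v' = \sum_j a_j v_j$ and exactly one $a_j$ equals $-1$; when the remaining coefficients $a_j$ have mixed signs, concavity and superadditivity of $\phi$ give estimates in opposite directions for the positive-$a_j$ and negative-$a_j$ contributions, and the inequality can fail. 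Refining further by barycentric subdivisions does not repair this in any obvious way (the new ray's value $\phi(v_\sigma)$ is generically strictly larger than the old interpolated value, which introduces new folds with no sign control), so ``sufficiently fine'' does not automatically mean ``concave on the nose.'' What is actually needed is a direct argument that a $b$-divisor whose conical function admits a concave extension is nef in the sense of Definition \ref{nefdef} --- for instance by approximating $\Delta_\phi$ by rational polytopes $Q$ whose normal fans refine a given $\Sigma_0$ and comparing the interpolant to the support function of $Q$ --- or a reformulation of Theorem \ref{volume} that applies directly to any $b$-divisor with concave extension without routing through Definition \ref{nefdef}. You should not present the barycentric-refinement claim as if it were a known elementary fact; it is the crux of the matter, and as stated it is false in higher dimensions. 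Concretely: in dimension two your computation is fine because conical concavity alone forces the PL interpolant to be concave, but the statement you need is $n$-dimensional.
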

\begin{proof}
We show that the sequence $\left\{D_{1, \Sigma'} \dotsm D_{n, \Sigma'}\right\}_{\Sigma' \in R(\Sigma)}$ is Cauchy. Let $\varepsilon > 0$. For all subsets $I \subseteq \{1, \dotsc, n\}$, we let $\D_{I} \coloneqq \sum_{i \in I}\D_{i}$. Note that $\D_I$ is a positive linear combination of nef toric $b$-divisors and is thus nef. It follows from Theorem \ref{volume} that $\D_{I}$ is integrable. In particular, this means that the sequence $\left\{D_{I,\Sigma'}^n\right\}_{\Sigma' \in R(\Sigma)}$ is Cauchy. Hence, there exists a fan $\tilde{\Sigma}_I \in R(\Sigma)$ such that for all refinements $\Sigma', \Sigma'' \geq \tilde{\Sigma}_I \in R(\Sigma)$, the inequality
\[
\left|D_{I, \Sigma''}^n -D_{I, \Sigma'}^n\right| \leq \frac{n!\varepsilon}{2^n}
\]
is satisfied. Let $\tilde{\Sigma} \coloneqq \max_{I\subseteq \{1, \dots, n\}}\tilde{\Sigma}_I$. Then, using Lemma \ref{variables}, we get that for all $\Sigma', \Sigma'' \geq \tilde{\Sigma}$, the sequence of inequalities     
\begin{align*}
& \left| D_{1, \Sigma''} \dotsm D_{n, \Sigma''}-D_{1, \Sigma'} \dotsm D_{n, \Sigma'}\right| \\ &= \left| \frac{1}{n!}\sum_{I \subseteq \{1, \dotsc, n\}}(-1)^{n-\#I}\left(\sum_{i \in I}D_{i,\Sigma''}\right)^n- \frac{1}{n!}\sum_{I \subseteq \{1, \dotsc, n\}}(-1)^{n-\#I}\left( \sum_{i \in I}D_{i,\Sigma'}\right)^n \right|\\ &=\frac{1}{n!}\left|\sum_{I \subseteq \{1, \dotsc ,n\}}(-1)^{n-\#I}\left[\left(\sum_{i \in I}D_{i,\Sigma''}\right)^n-\left(\sum_{i \in I}D_{i,\Sigma'}\right)^n\right]\right| \\ &= \frac{1}{n!}\left|\sum_{I \subseteq \{1, \dotsc ,n\}}(-1)^{n-\#I}\left[D_{I,\Sigma''}^n-D_{I,\Sigma'}^n\right]\right|\\
&\leq \frac{1}{n!}\sum_{I \subseteq \{1, \dotsc, n \}} \left| D_{I, \Sigma''}^n- D_{I, \Sigma'}^n\right|
\leq \varepsilon
\end{align*}
holds true, as we wanted to show. Moreover, we have that
\begin{align*}
\D_1 \dotsm \D_n &= \lim_{\Sigma' \in R(\Sigma)}D_{1, \Sigma'}\dotsm D_{n, \Sigma'} \\
&= \lim_{\Sigma' \in R\left(\Sigma\right)}\text{MV}\left(P_{D_{1, \Sigma'}}, \dotsc, P_{D_{n, \Sigma'}}\right)\\ &= \text{MV}\left(\Delta_{\phi_1}, \dotsc, \Delta_{\phi_n}\right),
\end{align*}
concluding the proof of the theorem. 
\end{proof}
As a corollary we get that also the difference of nef toric $b$-divisors is integrable.
\begin{theorem}\label{differencenef}
Let $\D = \D_1 - \D_2$ be the difference of two nef toric $b$-divisors $\D_1$ and $\D_2$. Then $\D$ is integrable and its degree is given as a sum of mixed volumes of the stability sets of the concave functions $\phi_1$ and $\phi_2$ corresponding to $\D_1$ and $\D_2$, respectively. Explicitely, we have 
\[
\D^n = \sum_{i =0}^n (-1)^i \binom{n}{i} \MV\left(\underbrace{\Delta_{\phi_1}, \dotsc, \Delta_{\phi_1}}_{(n-i)\timess}, \underbrace{\Delta_{\phi_2}, \dotsc, \Delta_{\phi_2}}_{i\timess}\right).
\]
\end{theorem}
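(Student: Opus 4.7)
The plan is to reduce the statement to Theorem \ref{amelie} via the multilinearity of the classical intersection product on each incarnation. First I would observe that $\D = \D_1 - \D_2$ is a well-defined toric Weil $b$-divisor: for each $\Sigma' \in R(\Sigma)$ the incarnation is $D_{\Sigma'} = D_{1,\Sigma'} - D_{2,\Sigma'}$, and these are compatible under push-forward since push-forward is linear and each of $\D_1$, $\D_2$ is compatible. In particular the net $\{D_{\Sigma'}^n\}_{\Sigma' \in R(\Sigma)}$ whose limit defines $\D^n$ makes sense.

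Next, on each smooth complete model $X_{\Sigma'}$ the intersection pairing on $\T$-$\mathrm{Ca}(X_{\Sigma'})_{\Q}$ is multilinear, so the binomial expansion
\begin{equation*}
D_{\Sigma'}^n = (D_{1,\Sigma'} - D_{2,\Sigma'})^n = \sum_{i=0}^n (-1)^i \binom{n}{i}\, D_{1,\Sigma'}^{\,n-i} \cdot D_{2,\Sigma'}^{\,i}
\end{equation*}
holds term by term for every $\Sigma'$. For each fixed $i \in \{0, \dotsc, n\}$, the summand $D_{1,\Sigma'}^{\,n-i}\cdot D_{2,\Sigma'}^{\,i}$ is precisely the $\Sigma'$-incarnation of the mixed intersection of the $n$-tuple
\[
(\underbrace{\D_1, \dotsc, \D_1}_{(n-i)\timess},\, \underbrace{\D_2, \dotsc, \D_2}_{i\timess}),
\]
all of whose entries are nef toric $b$-divisors. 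Applying Theorem \ref{amelie} to this tuple I obtain that the corresponding net converges to
\[
\MV\bigl(\underbrace{\Delta_{\phi_1}, \dotsc, \Delta_{\phi_1}}_{(n-i)\timess},\, \underbrace{\Delta_{\phi_2}, \dotsc, \Delta_{\phi_2}}_{i\timess}\bigr).
\]

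Finally, since there are only $n+1$ terms in the expansion, the limit commutes with the finite sum, and $\{D_{\Sigma'}^n\}_{\Sigma' \in R(\Sigma)}$ is therefore a finite $\Z$-linear combination of convergent nets. This yields integrability of $\D$ and the claimed formula for $\D^n$. The main ``obstacle'' is essentially absent: everything reduces cleanly to Theorem \ref{amelie} once multilinearity is invoked, and the only care needed is to check that the binomial expansion is indeed being carried out at the level of incarnations (where classical intersection theory applies) rather than directly at the level of $b$-divisors, for which no intersection pairing of more than one Weil $b$-divisor has been defined in general.
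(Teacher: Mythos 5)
Your proposal is correct and takes essentially the same route as the paper: expand $(D_{1,\Sigma'} - D_{2,\Sigma'})^n$ binomially on each model, recognize each summand $D_{1,\Sigma'}^{\,n-i}\cdot D_{2,\Sigma'}^{\,i}$ as the incarnation of a mixed degree of nef $b$-divisors so that Theorem~\ref{amelie} applies, and pass to the limit over the finite sum. The paper phrases this as an explicit Cauchy estimate with an $\varepsilon/\bigl((n+1)\binom{n}{i}\bigr)$ threshold per term rather than invoking that limits of nets commute with finite linear combinations, but the mathematical content is identical.
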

\begin{proof}
We show that the sequence $\left\{D_{\Sigma'}^n\right\}_{\Sigma' \in R(\Sigma)} = \left\{\left(D_{1, \Sigma'}-D_{2, \Sigma'}\right)^n\right\}_{\Sigma' \in R(\Sigma)}$ is Cauchy. Let $\varepsilon > 0$. By the proof of Theorem \ref{amelie}, the nefness of $\D_1$ and $\D_2$ implies that for all $i = 1, \dotsc, n$ there exists a fan $\tilde{\Sigma}_i \in R(\Sigma)$ such that for all refinements $\Sigma', \Sigma'' \geq \tilde{\Sigma}_i$, the inequality 
\[
\left|D_{1, \Sigma''}^{n-i}D_{2, \Sigma''}^i - D_{1, \Sigma'}^{n-i}D_{2, \Sigma'}^i\right| \leq \frac{\varepsilon}{(n+1)\binom{n}{i}}
\]
holds true. Letting $\tilde{\Sigma} \coloneqq \max_{i \in \{1, \dotsc, n\}}\tilde{\Sigma}_i$, we get   
\begin{align*} 
\left|D_{\Sigma''}^n - D_{\Sigma'}^n\right| =& \left|(D_{1,\Sigma''} -D_{2, \Sigma''})^n - (D_{1,\Sigma'} -D_{2, \Sigma'})^n\right| \\=& \left|\sum_{i =0}^n (-1)^i \binom{n}{i}\left(D_{1, \Sigma''}^{n-i}D_{2, \Sigma''}^i - D_{1, \Sigma'}^{n-i}D_{2, \Sigma'}^i \right)\right| \\
\leq & \sum_{i=0}^n \binom{n}{i}\left|D_{1, \Sigma''}^{n-i}D_{2, \Sigma''}^i- D_{1, \Sigma'}^{n-i}D_{2, \Sigma'}^i\right|\leq \varepsilon
\end{align*}
for all $\Sigma', \Sigma'' \geq \tilde{\Sigma}$, as claimed. Moreover, we get 
\begin{align*}
\D^n &= \lim_{\Sigma' \in R(\Sigma)}D_{\Sigma'}^n \\
&= \lim_{\Sigma' \in R(\Sigma)}\left(D_{1, \Sigma'} - D_{2, \Sigma'}\right)^n\\
&= \lim_{\Sigma' \in R(\Sigma)}\sum_{i =0}^n (-1)^i \binom{n}{i}D_{1, \Sigma'}^{n-i}D_{2, \Sigma'}^i \\
&= \lim_{\Sigma' \in R(\Sigma)}\sum_{i =0}^n (-1)^i \binom{n}{i} \MV\left(\underbrace{P_{D_{1,\Sigma'}}, \dotsc, P_{D_{1, \Sigma'}}}_{(n-i)\timess}, \underbrace{P_{D_{2,\Sigma'}}, \dotsc, P_{D_{2, \Sigma'}}}_{i\timess}\right) \\
&= \sum_{i =0}^n (-1)^i \binom{n}{i} \MV\left(\underbrace{\Delta_{\phi_1}, \dotsc, \Delta_{\phi_1}}_{(n-i)\timess}, \underbrace{\Delta_{\phi_2}, \dotsc, \Delta_{\phi_2}}_{i\timess}\right)
\end{align*}
as stated in the theorem. 
\end{proof}
\section{The case $n=2$}\label{2dimensionalcase}\label{section4}
In this section, we consider the case of smooth and complete toric surfaces, i.e.~$\Sigma \subseteq N_{\R}$ is a smooth and complete fan of dimension $2$. We see that integrability of a toric $b$-divisor on a toric surface is equivalent to the convergence of a certain series, where the sum ranges over pairs of relatively prime integers. We also provide some examples.
 
We start by fixing an identification of lattices $N \simeq \Z^2$ and we consider a primitive vector $v = (v_1, v_2) \in N^{\text{prim}}.$ We denote by $P_v$ the Newton polyhedron associated to the monomial ideal  $I= (x^{v_2},y^{v_1})$. Note that $P_v$ is not bounded. Also, we let $\Sigma_{P_v} \subseteq N_{\R}$ be the (non complete) normal fan of $P_v$. Figure \ref{polyandfan} is an example of the polyhedron $P_v$ and of its normal fan $\Sigma_{P_v}$ for the primitive vector $v = (2,3) \in \Z^2$. The fan $\Sigma_{P_v}$ consists of the two $2$-dimensional cones $\sigma_1$ and $\sigma_2$, the three rays spanned by the vectors $(1,0), (0,1)$ and $(2,3)$ and the $0$-dimensional cone $(0,0)$.  

\begin{figure}[ht!!!]
\begin{center}
\begin{tikzpicture}[scale = 1.3]
    \draw [<->,thick] (0,2) node (yaxis) [above] {$y$}
        |- (3,0) node (xaxis) [right] {$x$};
    \draw[-,blue, very thick] (0,1) -- (1.5,0) ;
   \draw[-,blue, very thick] (0,1) -- (0,2);
    \draw[-,blue, very thick] (1.5,0) -- (3,0);
    \draw[blue, very thick] (1.5,1.1)node{$P_v$};
    \draw (1.5,-0.2) node{$(3,0)$};
    \draw (-0.4,1) node{$(0,2)$};

\filldraw[blue, dashed, opacity=0.1] (1.5,0)--(3,0)--(3,2)--(0,2)--(0,1)--cycle;
    \draw [<->,thick] (5,2) node (yaxis) [above] {$y$}
        |- (8,0) node (xaxis) [right] {$x$};
    \draw[->,blue, very thick] (5,0) -- (5,0.5) ;
   \draw[->,blue, very thick] (5,0) -- (5.5,0);
    \draw[->,blue, very thick] (5,0) -- (6,1.5) node[right]{$(2,3)$};
    \draw[blue, very thick] (8.2,2.2)node{$\Sigma_{P_v}$};
    \draw[blue, very thick] (4.6, 0.5)node{$(0,1)$}; 
    \draw[blue, very thick] (5.5, -0.2)node{$(1,0)$}; 
   \draw[blue, very thick] (6.5, 0.4)node{$\sigma_1$};
   \draw[blue, very thick] (5.3, 1.1)node{$\sigma_2$};
\end{tikzpicture}
\end{center}
\caption{Picture of $P_{(2,3)}$ and its fan $\Sigma_{P_{(2,3)}}$}
\label{polyandfan}
\end{figure}
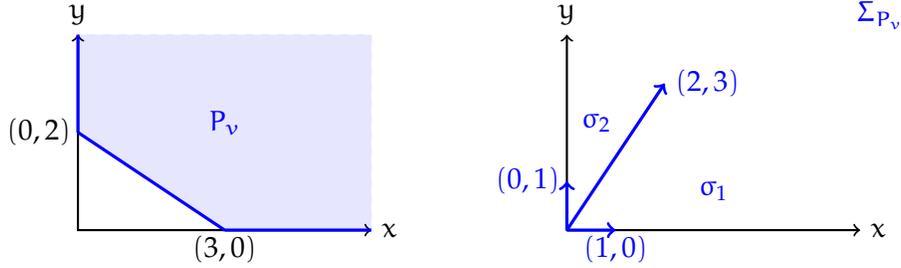
Even though the fan $\Sigma_{P_v}$ is not complete, it nevertheless corresponds to a smooth (non complete) toric variety $X_{\Sigma_{P_v}}$ and in the same manner as in the complete case, a regular subdivision $\Sigma'_{P_v} \geq \Sigma_{P_v}$ corresponds to a smooth (non complete) toric variety $X_{\Sigma'_{P_v}}$ together with a proper toric birational morphism $X_{\Sigma'_{P_v}} \to X_{\Sigma_{P_v}}.$ 
We have the following lemma. 
\begin{lemma}
With the same notations as above, there exists a minimal smooth subdivision $\Sigma'_{P_v}$ of $\Sigma_{P_v}$. Here, by minimal smooth subdivision we mean a smooth subdivision $\Sigma'_{P_v}$ of $\Sigma_{P_v}$ such that the induced toric morphism $X_{\Sigma'_{P_v}}\to X_{\Sigma_{P_v}}$ gives a \emph{minimal resolution of singularities} as defined in \cite[Section~10]{CLS}.  
\end{lemma}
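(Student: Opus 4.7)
The plan is to resolve $\Sigma_{P_v}$ cone by cone using the Hirzebruch--Jung continued fraction algorithm, which in dimension two produces a canonical minimal smooth refinement of any strongly convex rational cone. First I would record that $\Sigma_{P_v}$ contains exactly the rays $\cone(e_1)$, $\cone(e_2)$, $\cone(v)$, the zero cone, and the two top-dimensional cones $\sigma_1 = \cone(e_1, v)$ and $\sigma_2 = \cone(v, e_2)$. Since $e_1$, $e_2$, and $v$ are primitive in $\Z^2$ and any primitive vector in a rank-two lattice extends to a $\Z$-basis, every cone of dimension $\leq 1$ is automatically smooth, so the singularities of $X_{\Sigma_{P_v}}$ are supported on the two closed orbits corresponding to $\sigma_1$ and $\sigma_2$.

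Next, for each singular top-dimensional cone $\sigma_i$, after a unimodular change of basis of $N$ one may write $\sigma_i = \cone(e_1, k e_1 + q e_2)$ with $\gcd(k, q) = 1$ and $0 < k < q$. I would then invoke the Hirzebruch--Jung continued fraction expansion $q/k = b_1 - 1/(b_2 - 1/(b_3 - \cdots))$ with all $b_j \geq 2$. This produces a finite chain of primitive lattice vectors $u_0 = e_1, u_1, \dotsc, u_r = k e_1 + q e_2$ in $\sigma_i$ such that each consecutive pair $(u_{j-1}, u_j)$ forms a $\Z$-basis of $N$, hence subdivides $\sigma_i$ into smooth cones. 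Because each $b_j$ is at least $2$, the resulting resolution has no $(-1)$-curve in its exceptional fibre over the toric fixed point of $\sigma_i$, which is precisely the characterization of the minimal resolution of the associated cyclic quotient singularity. The two local refinements add rays only in the relative interiors of $\sigma_1$ and $\sigma_2$ and leave the shared boundary ray $\cone(v)$ untouched, so they glue to a global smooth refinement $\Sigma'_{P_v} \geq \Sigma_{P_v}$.

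To conclude minimality, I would argue at the level of fans: any smooth refinement $\Sigma'' \geq \Sigma_{P_v}$ restricts on each $\sigma_i$ to a smooth subdivision of $\sigma_i$, and the Hirzebruch--Jung subdivision is universally dominated among such subdivisions precisely because deleting any of its rays $u_j$ would force a self-intersection number $b_j = 1$, contradicting the $b_j \geq 2$ obtained from the continued fraction. Hence $\Sigma'' \geq \Sigma'_{P_v}$, and the induced toric proper birational morphism $X_{\Sigma'_{P_v}} \to X_{\Sigma_{P_v}}$ realizes the minimal resolution of singularities in the sense of \cite[Section~10]{CLS}.

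The main obstacle is not really an obstacle but rather an appeal: the entire argument rests on the classical Hirzebruch--Jung theorem together with its characterization of minimal resolutions of two-dimensional cyclic quotient singularities. Since the statement of the lemma already cites \cite[Section~10]{CLS}, I would just quote the relevant results from there rather than rederive them, keeping the proof short.
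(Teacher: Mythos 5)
Your proposal is correct and, as you yourself anticipate in the final paragraph, it reduces to the same move the paper makes: the paper's entire proof is the citation ``See \cite[Corollary~10.4.9]{CLS}.'' Your Hirzebruch--Jung unpacking is a faithful expansion of what that corollary rests on, but the paper simply quotes it, exactly as you suggest doing.
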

\begin{proof}
See \cite[Corollary~10.4.9]{CLS}.
\end{proof}
\begin{rem}\label{amelie1}
We want to give an explicit description of the fan $\Sigma'_{P_v}$ in the above theorem. Or at least, of the rays $\tau_{v_{\alpha}}$ and $\tau_{v_{\beta}}$ which together with $\tau_v$ span a $2$-dimensional cone in $\Sigma'_{P_v}$, i.e.~such that $\text{cone}\langle v, v_{\alpha}\rangle$, $\text{cone}\langle v, v_{\beta}\rangle \in \Sigma'_{P_v}(2)$. The following lemma, whose proof follows from the computations done in the proof of \cite[Theorem~10.2.3]{CLS} gives a formula for the vectors $v_{\alpha}$ and $v_{\beta}$. 
\end{rem} 
\begin{lemma}\label{euc}
Let $v = (v_1, v_2) \in N^{\prim}$. Let $x$ and $y$ be integers uniquely defined by the following properties:
\begin{enumerate}
\item $y \leq 0$ and $x \geq 0$.
\item $xv_1 + yv_2 = 1$.
\item $0 \leq -y < v_1$.
\item $0\leq x < v_2$.
\end{enumerate}
Then the primitive vectors $v_{\alpha}$ and $v_{\beta}$ of Remark \ref{amelie1} are given by 
\begin{align}\label{alphabeta}
v_{\alpha} = (-y,x) \quad \text{ and }\quad v_{\beta} = (v_1+y,v_2-x).
\end{align}
\end{lemma}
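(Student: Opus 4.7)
The plan is to prove the lemma in four logically distinct stages, each essentially a determinant or Bezout-type calculation. First, I would establish the existence and uniqueness of the pair $(x,y)$. Since $v$ is primitive in $N\simeq\Z^2$, one has $\gcd(v_1,v_2)=1$, so Bezout's identity supplies integer solutions to $xv_1+yv_2=1$, all related by translations $(x,y)\mapsto(x+v_2,\,y-v_1)$. Choosing the representative with $0\leq x<v_2$ pins down a unique pair, and from the resulting relation $yv_2=1-xv_1$ (together with $v_1,v_2\geq 2$ in the geometrically nontrivial case, since otherwise one of the cones $\sigma_1,\sigma_2$ is already smooth) one reads off $y\leq 0$ and $-y<v_1$, so conditions (1)--(4) are simultaneously satisfied.

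The heart of the argument is the pair of determinant identities
\[
\det(v,v_\alpha) \;=\; v_1 x + v_2 y \;=\; 1, \qquad \det(v_\beta,v) \;=\; v_1 x + v_2 y \;=\; 1,
\]
each obtained by direct expansion of the corresponding $2\times 2$ matrix and then invoking condition (2). Each identity certifies that the pair in question is a $\Z$-basis of $N$, from which it follows immediately that $v_\alpha$ and $v_\beta$ are primitive lattice vectors and that the cones $\cone\langle v,v_\alpha\rangle$ and $\cone\langle v_\beta,v\rangle$ are smooth.

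Next, I would confirm the geometric location of the new rays by solving $v_\alpha=a\,e_2+b\,v$ and $v_\beta=a'\,e_1+b'\,v$; a short calculation yields $(a,b)=(1/v_1,\,-y/v_1)$ and $(a',b')=(1/v_2,\,1-x/v_2)$, which are all strictly positive by the inequalities (1), (3) and (4). Hence $v_\alpha\in\relint(\sigma_2)$ and $v_\beta\in\relint(\sigma_1)$, so the two new rays genuinely subdivide the original singular cones of $\Sigma_{P_v}$ rather than coinciding with one of its rays.

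The remaining and most substantial step is matching $v_\alpha, v_\beta$ to the adjacent rays of $\tau_v$ in the \emph{minimal} smooth subdivision described in Remark \ref{amelie1}. For this I would invoke the Hirzebruch--Jung continued-fraction description of the minimal resolution of a toric surface singularity, as carried out in the proof of \cite[Theorem~10.2.3]{CLS}: the ray adjacent to $v$ in the minimal resolution of the singular cone $\sigma_1=\cone\langle e_1,v\rangle$ is uniquely characterized as the primitive $w\in\sigma_1$ with $\det(w,v)=1$ and residual index $\det(e_1,w)=v_2-x$ strictly smaller than $\det(e_1,v)=v_2$; our $v_\beta$ satisfies precisely this, and $v_\alpha$ satisfies the symmetric characterization on the side of $e_2$. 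This last identification with the Hirzebruch--Jung data is the only non-routine point of the proof; I expect it to be the main obstacle, while the remaining steps are bookkeeping with $2\times 2$ determinants.
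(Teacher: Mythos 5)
Your proposal is correct and, in essence, follows the same route as the paper: the paper offers no proof at all beyond the one-line citation in Remark~\ref{amelie1} that the formulas \enquote{follow from the computations done in the proof of \cite[Theorem~10.2.3]{CLS}}, and your final, non-routine step (identifying $v_\alpha$, $v_\beta$ with the rays of the Hirzebruch--Jung minimal resolution adjacent to $\tau_v$) likewise defers to that same source. What you add on top of the paper's citation is genuine value: the explicit Bezout/uniqueness discussion, the two $2\times 2$ determinant identities $\det(v,v_\alpha)=\det(v_\beta,v)=v_1x+v_2y=1$ certifying unimodularity and primitivity, and the verification that $v_\alpha\in\relint(\sigma_2)$ and $v_\beta\in\relint(\sigma_1)$ via the coefficients $(1/v_1,-y/v_1)$ and $(1/v_2,(v_2-x)/v_2)$ — all checked correctly. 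The one place where you could tighten the argument is the assertion that the adjacent ray is \emph{uniquely characterized} by $\det(w,v)=1$ together with $0\le\det(e_1,w)<\det(e_1,v)$; this is true, but it is not stated in quite that form in CLS and deserves a word of justification. For instance: all primitive $w\in\sigma_1$ with $\det(w,v)=1$ form the discrete family $w_k=v_\beta+kv$, $k\ge 0$; the residual index is $\det(e_1,w_k)=(v_2-x)+kv_2$, which lies in $[0,v_2)$ only for $k=0$; and the vertex of $\conv\bigl((\sigma_1\cap N)\setminus\{0\}\bigr)$ adjacent to $v$ must satisfy $w-v\notin\sigma_1$, which again selects $k=0$. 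With that small addition the argument is airtight.
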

\begin{rem}
The construction of the polyhedral subdivision $\Sigma'_{P_v}$ which can be found in the proof of \cite[Corollary~10.4.9/Theorem 10.2.3]{CLS} is done by taking successive barycentric subdivisions of the $2$-dimensional cones of $\Sigma_{P_v}$. It thus follows that the ray $\tau_v$ is obtained from the barycentric subdivision of $\cone(v_{\alpha}, v_{\beta})$. Indeed, this corresponds to the fact that $v = v_{\alpha} + v_{\beta}$, which follows from a straightforward computation using the formulas in (\ref{alphabeta}).
\end{rem}
Figure \ref{starv} shows the vectors $v_{\alpha}$ and  $v_{\beta}$ for the case $v = (2,3)$. Here we have $v_{\alpha} = (1,2)$ and $v_{\beta} = (1,1)$.

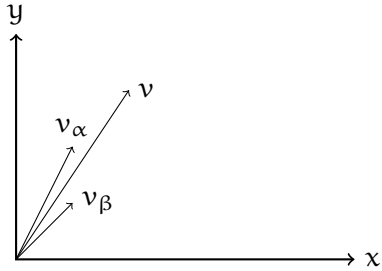
\begin{figure}[ht!]
\begin{center}
\begin{tikzpicture}[scale=1.5]
    \draw [<->,thick] (0,2) node (yaxis) [above] {$y$}
        |- (3,0) node (xaxis) [right] {$x$};
    \draw [->] (0,0) -- (1,1.5) node [right]{$v$};
    \draw [->] (0,0) -- (0.5,1) node [above]{$v_{\alpha}$};
    \draw [->] (0,0) -- (0.5,0.5) node [right]{$v_{\beta}$};
    
\end{tikzpicture}
\end{center}
\caption{Picture for the case $v = (2,3), \, v_{\alpha} = (1,2), \,v_{\beta} = (1,1)$}
\label{starv}
\end{figure}
Let's return to toric $b$-divisors. Let $\D = (D_{\Sigma'})_{\Sigma' \in R\left(\Sigma\right)}$ be a toric $b$-divisor on a smooth and complete toric surface $\X$. We define a function 
\[
\mu_{\D} \colon N^{\text{prim}} \longrightarrow \Q
\]
in the following way: let $v= (v_1, v_2) \in N^{\text{prim}}$. If $\tau_v \in \Sigma(1)$ then we put $\mu_{\D}(v) = 0$. Otherwise, let $\sigma \in \Sigma(2)$ be a $2$-dimensional cone in $\Sigma$ containing $\tau_v$ in its interior. Since $\Sigma$ is smooth, we may assume that $\sigma = \cone(e_1, e_2 )$ is the positive quadrant. Consider the fan $\Sigma'_v = \Sigma \cup \Sigma'_{P_v}$, where $\Sigma'_{P_v}$ is the normal fan of the Newton polyhedron $P_v$ described previously. By construction, we have that $\Sigma'_v$ is a smooth subdivision of $\Sigma$, i.e.~it belongs to $R\left(\Sigma\right)$. Consider the toric birational morphism $\pi \colon X_{\Sigma'_v} \to X_{\Sigma'_v \setminus{\{\tau_v\}}}$. This is the blow up of $X_{\Sigma'_v \setminus{\{\tau_v\}}}$ at the torus fixed point corresponding to $\text{cone}\langle v_{\alpha}, v_{\beta} \rangle \in \Sigma'_v \setminus{\{\tau_v\}}(2)$ with exceptional divisor $E_{\tau_v}$ corresponding to the ray $\tau_v$. Define $\mu_{\D}(v) \in \Q$ to be the rational number satisfying 
\[
D_{\Sigma'_v} = \pi^*D_{\Sigma'_v \setminus{\{\tau_v\}}} + \mu_{\D}(v)E_{\tau_v}.
\]
We call $\mu_{\D}$ the \emph{jumping function} of the toric $b$-divisor $\D$.  
\begin{lemma}\label{madrid}
Let $\p$ be the conical function associated to the $b$-divisor $\D$ from Lemma \ref{equi}. Then the function $\mu_{\D} \colon N^{\prim} \to \Q$ is given by the assignment
\[
v \longmapsto \p(v) - (\p(v_{\alpha}) + \p(v_{\beta})),
\]
where $v_{\alpha}$ and $v_{\beta}$ are the primitive vectors of Lemma \ref{euc}.
\end{lemma}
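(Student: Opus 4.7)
The plan is to unfold the definition of $\mu_{\D}(v)$ in purely combinatorial terms using the dictionary of Lemma \ref{equi}. By that correspondence, for every $\Sigma' \in R(\Sigma)$ the incarnation $D_{\Sigma'}$ is written as
\[
D_{\Sigma'} \;=\; \sum_{\tau \in \Sigma'(1)} a_{\tau}\,D_{\tau}, \qquad a_{\tau} \;=\; -\tilde{\phi}(v_{\tau}),
\]
or equivalently, the associated virtual support function $\psi_{\Sigma'}$ satisfies $\psi_{\Sigma'}(v_{\tau}) = \tilde{\phi}(v_{\tau})$ on every primitive ray generator of $\Sigma'$. Applying this to both $\Sigma' = \Sigma'_v$ and $\Sigma' = \Sigma'_v \setminus \{\tau_v\}$ reduces the lemma to a coefficient comparison on the exceptional divisor $E_{\tau_v}$.

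First I would check that the equality of the lemma is automatic on all rays other than $\tau_v$. Indeed, every ray $\tau \in \Sigma'_v(1) \setminus \{\tau_v\}$ also lies in $\Sigma'_v \setminus \{\tau_v\}$, and since $\pi$ is the toric morphism attached to a refinement, $\pi^{*}D_{\Sigma'_v \setminus \{\tau_v\}}$ has the same support function as $D_{\Sigma'_v \setminus \{\tau_v\}}$. Hence both $D_{\Sigma'_v}$ and $\pi^{*}D_{\Sigma'_v\setminus\{\tau_v\}}$ have coefficient $-\tilde{\phi}(v_{\tau})$ on every such $D_{\tau}$. Consequently the two sides of $D_{\Sigma'_v} = \pi^{*}D_{\Sigma'_v\setminus\{\tau_v\}} + \mu_{\D}(v) E_{\tau_v}$ agree off of $E_{\tau_v}$, confirming that the relation uniquely determines the rational number $\mu_{\D}(v)$ and that it equals the difference of the $E_{\tau_v}$-coefficients of the two sides.

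Next I would compute each coefficient of $E_{\tau_v}$. On the left the coefficient is $-\tilde{\phi}(v)$ by the defining formula. On the right it is $-\psi_{\Sigma'_v\setminus\{\tau_v\}}(v)$, since the support function of $\pi^{*}D_{\Sigma'_v\setminus\{\tau_v\}}$ on $\Sigma'_v$ is $\psi_{\Sigma'_v\setminus\{\tau_v\}}$ viewed on the refinement. This is where the crucial geometric input enters: by the remark preceding the statement, the ray $\tau_v$ is obtained by barycentric subdivision of $\cone(v_\alpha, v_\beta) \in \Sigma'_v \setminus\{\tau_v\}$, and $v = v_{\alpha} + v_{\beta}$. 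Since $v$ lies in the relative interior of the smooth cone $\cone(v_\alpha, v_\beta)$, on which $\psi_{\Sigma'_v\setminus\{\tau_v\}}$ is linear, we obtain
\[
\psi_{\Sigma'_v\setminus\{\tau_v\}}(v) \;=\; \psi_{\Sigma'_v\setminus\{\tau_v\}}(v_{\alpha}) + \psi_{\Sigma'_v\setminus\{\tau_v\}}(v_{\beta}) \;=\; \tilde{\phi}(v_{\alpha}) + \tilde{\phi}(v_{\beta}).
\]

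Combining the two computations yields the desired formula upon solving for $\mu_{\D}(v)$. The only delicate point in the argument is the identification $v = v_\alpha + v_\beta$ together with the planarity of $\cone(v_\alpha, v_\beta)$ in $\Sigma'_v\setminus\{\tau_v\}$, both of which are supplied by Lemma \ref{euc} and the construction of the minimal smooth subdivision $\Sigma'_{P_v}$. Everything else is bookkeeping in the standard correspondence between toric divisors and virtual support functions, so I do not anticipate any serious obstacle; as a sanity check, one may verify that the sign of the formula is compatible with Lemma \ref{con1} and the concavity discussed in Remark \ref{remark} in the nef case.
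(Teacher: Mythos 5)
Your proposal follows the same route as the paper's proof: unpack $\mu_{\D}(v)$ as a coefficient comparison at $E_{\tau_v}$, use the dictionary $a_\tau = -\p(v_\tau)$ from Lemma \ref{equi}, and exploit linearity of the support function of $\pi^*D_{\Sigma'_v\setminus\{\tau_v\}}$ on $\cone(v_\alpha, v_\beta)$ together with $v = v_\alpha + v_\beta$. However, you leave the last step as ``upon solving for $\mu_\D(v)$,'' and that step is exactly where a sign subtlety hides. Equating the $E_{\tau_v}$-coefficients in $D_{\Sigma'_v} = \pi^*D_{\Sigma'_v\setminus\{\tau_v\}} + \mu_\D(v)E_{\tau_v}$ gives $-\p(v) = -\bigl(\p(v_\alpha) + \p(v_\beta)\bigr) + \mu_\D(v)$, hence $\mu_\D(v) = \p(v_\alpha) + \p(v_\beta) - \p(v)$ --- the negative of the displayed formula.

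The sanity check you propose at the end would in fact detect this. Lemma \ref{con1} gives $D_{\Sigma'_v} \le \pi^*D_{\Sigma'_v\setminus\{\tau_v\}}$ when $\D$ is nef, hence $\mu_\D(v) \le 0$; whereas for concave conical $\p$ one has $\p(v_\alpha + v_\beta) \ge \p(v_\alpha) + \p(v_\beta)$, so the displayed formula would force $\mu_\D(v) \ge 0$. Another quick check is the $b$-canonical divisor, whose conical function takes value $1$ on every primitive vector; the standard discrepancy formula $K_{X'} = \pi^*K_X + E$ for a point blowup on a surface gives $\mu_\D(v) = +1$, matching $\p(v_\alpha) + \p(v_\beta) - \p(v) = 1+1-1$ and not the stated formula. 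The paper's own short proof asserts $\mu_\D(v) = \p(v) - \psi_\sigma(v)$ without deriving it and so carries the same slip; it is harmless for Theorem \ref{f} and Example \ref{exa1}, which only use $\mu_\D(v)^2$, but the sign should be corrected. So: your plan is the right one and coincides with the paper's, but carry out the algebra rather than assert it, and you will see the stated formula needs a sign flip.
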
 
\begin{proof}
Fix a primitive vector $v \in N^{\text{prim}}$ and let $\sigma = \text{cone}\langle v_{\alpha}, v_{\beta} \rangle$. Let $\psi_{\sigma}$ be the linear function on $\sigma$ determined by the values $\p\left(v_{\alpha}\right)$ and $\p\left(v_{\beta}\right)$. Then, the coefficient $\mu_{\D}(v)$ we are looking for is the difference $\p(v) - \psi_{\sigma}(v)$. We compute 
\[
\p(v) - \psi_{\sigma}(v) = \p(v)-\left(\psi_{\sigma}\left(v_{\alpha}\right)+\psi_{\sigma}\left(v_{\beta}\right)\right) = \p(v) - (\p(v_{\alpha}) + \p(v_{\beta}))
\] 
as claimed.
\end{proof}

By definition, integrability of a toric $b$-divisor $\D$ is equivalent to the convergence of the sum $\sum_i\left(D_{\Sigma_{i+1}}^2-\pi_{i+1}^*D_{\Sigma_{i}}^2\right)$ of differences of toric degrees for every chain of blow ups 
\[
\dotsm \xrightarrow[]{\pi_{n+1}} X_{\Sigma_n} \xrightarrow[]{\; \; \pi_n  \; \;} \dotsm \xrightarrow[]{\; \; \pi_1 \; \;} X_{\Sigma_0} = \X.
\]
To simplify notation, let us write $a_i \coloneqq D_{\Sigma_{i+1}}^2-\pi_{i+1}^*D_{\Sigma_{i}}^2$. Note that we have canonical chains of blow ups consisting of taking successive barycentric subdivisions corresponding to blow ups of torus fixed points. Moreover, by Remark \ref{starcofinite}, the refinements $\Sigma' \geq \Sigma$ corresponding to barycentric subdivisions are cofinal in $R(\Sigma)$. Also, by the intersection theory of toric varieties we have $(E_{\tau_v})^2 =-1$ (see e.g. \cite[Example~10.4.5]{CLS}). Hence, replacing the index set $\{i \in \N\}$ with the index set $\{v \in N^{\prim}\}$, we have that $a_v = -\mu(v)^2$. The following theorem follows from these considerations together with Lemma \ref{madrid}.
\begin{theorem}\label{f}
Let notations be as above. A toric $b$-divisor $\D$ on a toric surface is integrable if and only if the infinite sum
\begin{align*}
\sum_{v \in N^{\text{prim}}}\mu_{\D}(v)^2 = \sum_{\overset{v=(v_1,v_2) \in \Z^2}{\gcd(v_1,v_2) = 1}}\big{(}\p(v) - \p(v_{\alpha}) - \p(v_{\beta})\big{)}^2
\end{align*}
is a finite number.
Moreover, if this is the case, and if we denote the value of the series by $\mu$, then the equality 
\[
\D^2 = D_{\Sigma}^2 -\mu
\]
holds true. 
\end{theorem}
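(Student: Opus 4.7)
The plan is to reduce to chains of iterated barycentric subdivisions of $\Sigma$, cofinal in $R(\Sigma)$ by part (1) of Remark \ref{starcofinite}, and to compute the change in self-intersection number under each single such step. A barycentric step $\pi\colon X_{\Sigma_{i+1}} \to X_{\Sigma_i}$ is the star subdivision of a smooth $2$-cone $\sigma_i = \cone(v_\alpha, v_\beta) \in \Sigma_i(2)$, which geometrically is the blow up of the corresponding torus-fixed point with exceptional divisor $E_{\tau_{v_i}}$ where $v_i = v_\alpha + v_\beta$. Writing $D_{\Sigma_{i+1}} = \pi^* D_{\Sigma_i} + \mu_{\D}(v_i) E_{\tau_{v_i}}$, squaring, and using $(\pi^*A)^2 = A^2$, the projection formula $(\pi^*A)\cdot E_{\tau_{v_i}} = A\cdot \pi_*E_{\tau_{v_i}} = 0$, and the toric surface identity $E_{\tau_{v_i}}^2 = -1$ recalled just above the theorem, one obtains
\[
D_{\Sigma_{i+1}}^2 = D_{\Sigma_i}^2 - \mu_{\D}(v_i)^2.
\]

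A short intermediate observation is required to identify this per-step contribution with the intrinsic invariant attached to $v_i$ in the paper. Namely, since the chain stays inside the fixed smooth $2$-cone of $\Sigma$ that contains $\tau_{v_i}$ and all intermediate $2$-cones have determinant $\pm 1$, any decomposition $v_i = v_\alpha + v_\beta$ with both summands primitive, lying in that ambient cone, and $\det(v_\alpha, v_\beta) = \pm 1$ is unique (a direct $\gcd$ computation); thus the pair arising at the step is forced to be the canonical one from Lemma \ref{euc}. Telescoping along a barycentric chain $\Sigma = \Sigma_0 \leq \Sigma_1 \leq \dotsb \leq \Sigma_n$ therefore gives
\[
D_{\Sigma_n}^2 = D_{\Sigma}^2 - \sum_{v \in \Sigma_n(1) \setminus \Sigma(1)} \mu_{\D}(v)^2,
\]
and the right-hand side depends only on the set of added rays, not on the order in which they are blown up. Combined with the De Concini--Procesi statement in Remark \ref{starcofinite}, which lets any $\Sigma' \in R(\Sigma)$ be dominated by a barycentric chain starting from $\Sigma$ (and symmetrically from $\Sigma'$), this upgrades to the identity
\[
D_{\Sigma'}^2 = D_{\Sigma}^2 - \sum_{v \in \Sigma'(1) \setminus \Sigma(1)} \mu_{\D}(v)^2 \qquad \text{for every } \Sigma' \in R(\Sigma).
\]

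From this point the conclusion is essentially formal. The net $\{D_{\Sigma'}^2\}_{\Sigma' \in R(\Sigma)}$ is monotone decreasing in $\Sigma'$ because each new ray contributes a non-negative quantity $\mu_{\D}(v)^2$, so it converges in $\R$ if and only if the series of non-negative terms $\sum_{v \in N^{\prim}} \mu_{\D}(v)^2$ is finite, and in that case its limit is exactly $D_{\Sigma}^2 - \mu$. Substituting the expression from Lemma \ref{madrid} rewrites this series as $\sum_{\gcd(v_1,v_2)=1}(\p(v) - \p(v_\alpha) - \p(v_\beta))^2$, yielding the identity claimed by the theorem. The step I expect to require the most care is the uniqueness of the decomposition $v = v_\alpha + v_\beta$ within barycentric chains that remain inside a fixed smooth $2$-cone of $\Sigma$; once this is settled, the per-step decrement matches the intrinsic $\mu_{\D}(v)^2$, and the independence of ordering together with the equivalence of net and sequential convergence follow immediately from the non-negativity of the summands.
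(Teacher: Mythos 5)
Your proposal is correct and follows the same route the paper takes: reduce to iterated barycentric (star) subdivisions using Remark \ref{starcofinite}, compute the per-step drop in self-intersection via the blow-up formula and $E_{\tau_v}^2=-1$ to get $a_v=-\mu_{\D}(v)^2$, telescope, and invoke Lemma \ref{madrid}. The uniqueness-of-decomposition observation (which identifies the rays bounding the subdivided cone with the canonical $v_\alpha,v_\beta$ of Lemma \ref{euc}) and the monotone-net convergence argument are precisely the details the paper leaves implicit in the sentence ``The following theorem follows from these considerations together with Lemma \ref{madrid}.''
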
 
Let's turn to an example. 
\begin{exa}\label{exa1}
We choose and identification of lattices $N\simeq \Z^2$ and consider the fan $\Sigma \subseteq \R^2$ corresponding to the projective plane $\P^2$. Let $\D$ be the nef toric $b$-divisor given by the concave function 
\[
\phi \colon \R^2 \longrightarrow  \R
\]
given by the assignment
\[
(a,b) \longmapsto  \begin{cases} \frac{ab}{a+b} \hspace{1.8cm} &\text{if \hspace{0.1cm} } a,b \geq 0, \, \text{ and } \, a+b >0,\\ \min\{a,b\} &\text{otherwise}. \end{cases} 
\]
Let $v=(v_1, v_2)$ be a pair of positive integers with $\text{gcd}(v_1, v_2) =1$ and let $x,y$ be integers satisfying the conditions in Lemma \ref{euc}. Therefore, we have 
\[
v_{\alpha} = (-y, x) \quad \text{ and } \quad v_{\beta} = (v_1+y, v_2 - x).
\]
We compute 
\begin{align*}
\mu_{\D}(v) &= \phi(v) - \phi(v_{\alpha}) - \phi(v_{\beta}) \\ &= \frac{v_1v_2}{v_1 + v_2} + \frac{xy}{x-y}-\frac{(v_1+y)(v_2-x)}{v_1+v_2-x+y} \\ &= \frac{(v_1x + v_2y)^2}{(v_1 + v_2)(x-y)(v_1+v_2-x+y)} \\&= \frac{1}{(v_1 + v_2)(x-y)(v_1+v_2-x+y)}.
\end{align*}
Taking the sum over all coprime integers we get 
\begin{align*}
\mu &= \sum_{\overset{v=(v_1,v_2) \in \Z^2}{\gcd(v_1,v_2) = 1}} \mu\left(v\right)^2 =  \sum_{\overset{v=(v_1,v_2) \in \Z^2}{\gcd(v_1,v_2) = 1}}\left(\frac{1}{(v_1 + v_2)(x-y)(v_1+v_2-x+y)}\right)^2 \\ &= \sum_{\overset{(m,n) \in \Z^2}{\gcd(m,n) = 1}}\frac{1}{m^2n^2(m+n)^2} = \frac{1}{3}.
\end{align*}
For the third equality, we set $m=v_1 + v_2$ and $n= x-y$ (note that $xm + v_2n = 1$). For the fourth equality we refer to \cite[Section~4]{BKK} and to \cite[Section~3]{AK}.
Combining the above calculation with Theorem \ref{f} we get
\[
\D^2 = D_{\P^2}^2 - \frac{1}{3} = H^2 -\frac{1}{3}= 1 - \frac{1}{3} = \frac{2}{3},
\]
where $H$ is the class of a hyperplane in $\P^2$. 

Note that since $\D$ is nef, we can compute this degree also using the formula in Proposition \ref{volume}. Now, to compute the stability set $\Delta_{\phi}$ we proceed as follows: first, let us recall that $\Delta_{\phi}$ is defined by 
\[
\Delta_{\phi} = \left\{(x,y) \in \left(\R^2\right)^{\vee}\, |\,  xa + yb  -\phi(a,b) \text { is bounded below }\forall (a,b) \in \R^2 \right\}
\]
(see Section \ref{notations}).
The function $\phi$ is concave and it has two regions where it is linear. In these regions, the slope is given by $(1,0)$ and $(0,1)$. This gives us the vertices of $\Delta_{\phi}$ and the line $(1,0) + t(0,1)$ for $t \in [0,1]$. In order to compute the non-linear part, we have to look at the tangent lines at $\phi$ in the positive orthant. We compute 
\[
\phi_a \coloneqq \frac{\partial \phi}{\partial a} = \frac{b^2}{(a+b)^2} \quad \text{ and }\quad \phi_b\coloneqq\frac{\partial \phi}{\partial b} = \frac{a^2}{(a+b)^2}.
\]
These are homogeneous functions of degree $0$ and hence we can see them as functions of $t = a/b$. The points corresponding to the curved boundary of the convex set $\Delta_{\phi}$ have coordinates given by $ (x,y) = \left((\phi_a(t), \phi_b(t)\right)$ with $t \in (0,\infty)$. One sees that these points satisfy  
\[
\sqrt{x} + \sqrt{y} = 1.
\]
Hence, we get the following description of the stability set of $\phi$:
\[
\Delta_{\phi} = \big{\{} (x,y) \in \R^2 \,\big{|} \, x,y \geq 0, \quad x+y \leq 1 , \quad \sqrt{x} + \sqrt{y} \geq 1 \big{\}}.
\]
Finally, letting $\Delta$ be the standard simplex $\convhull\big{(}(0,0), (1,0), (0,1) \big{)}$, we compute
\[
\D^2 = 2 ! \,\vol \left(\Delta_{\phi}\right) = 2 \, \vol\left(\Delta \right) - 2 \int_0^1 \left(1-\sqrt{x}\right)^2 \, \text{d}x = 1 - \frac{1}{3} = \frac{2}{3}.
\] 
Figure \ref{cuttingsimplices} relates the previous two calculations. Note that the $\mu_{\D}$-values (which are the numbers between the parenthesis depicted on the right) correspond to the volumes of the simplices which we successively subtract from the standard simplex $\Delta$ in order to approximate the volume of the convex set $\Delta_{\phi}$.
\begin{rem}
As is mentioned in the articles \cite{BKK} and \cite{AK}, the number $\mu$ can be interpreted as the value of the \emph{Mordell--Tornheim zeta function} evaluated at $(2,2;2)$. The latter paper gives a deeper insight into the connection between multiple zeta functions and arithmetic intersection numbers via volumes of simplices. This is a connection which should be further investigated in the future. In particular, intersection numbers of toric $b$-divisors in toric varieties of any dimension should be interpreted as multiple zeta values.
\end{rem}
\end{exa}
Let $\D$ be a toric $b$-divisor and let $\p$ be its associated function on $N_{\Q}$. A natural question to ask is what are necessary and sufficient conditions on $\p$ which ensure the integrability of $\D$. If $\p$ does not extend to a continuous function $\phi$ on $N_{\R}$ there is not much we can say. The interesting case is if we assume that the extension exists. In this case, what properties must $\phi$ satisfy in order to ensure integrability? We have already seen that if $\phi$ is a difference of concave functions, then the $b$-divisor is integrable. But can we ask for less? Is the existence of an extension enough? 

The answer to the last question is negative, as can be shown by Example \ref{divergence}. Before giving it, we introduce some notation.
\begin{Def}
Let $f,g \colon \N \to \R$ be two $\R$-valued functions with domain the natural numbers $\N$. We say that $f$ is equivalent to $g$ (and write $f \sim g$), if there exist two positive real constants $B, C$ such that $Bf(n) \leq g(n) \leq Cf(n)$ for all $n\gg0$.
\end{Def}
Now, we give an example of a non-integrable toric $b$-divisor for which the associated function $\p$ on $\N_{\Q}$ extends to $\N_{\R}$. Note that this is actually a toric $\R$-$b$-divisor, not a toric $\Q$-$b$-divisor. The question whether we can find a non-integrable toric $\Q$-$b$-divisor whose corresponding function $\tilde{\phi}$ on $N_{\Q}$ extends continuously to $N_{\R}$ still remains open. 
 \begin{center}
  \begin{figure}
\includegraphics[scale=0.14]{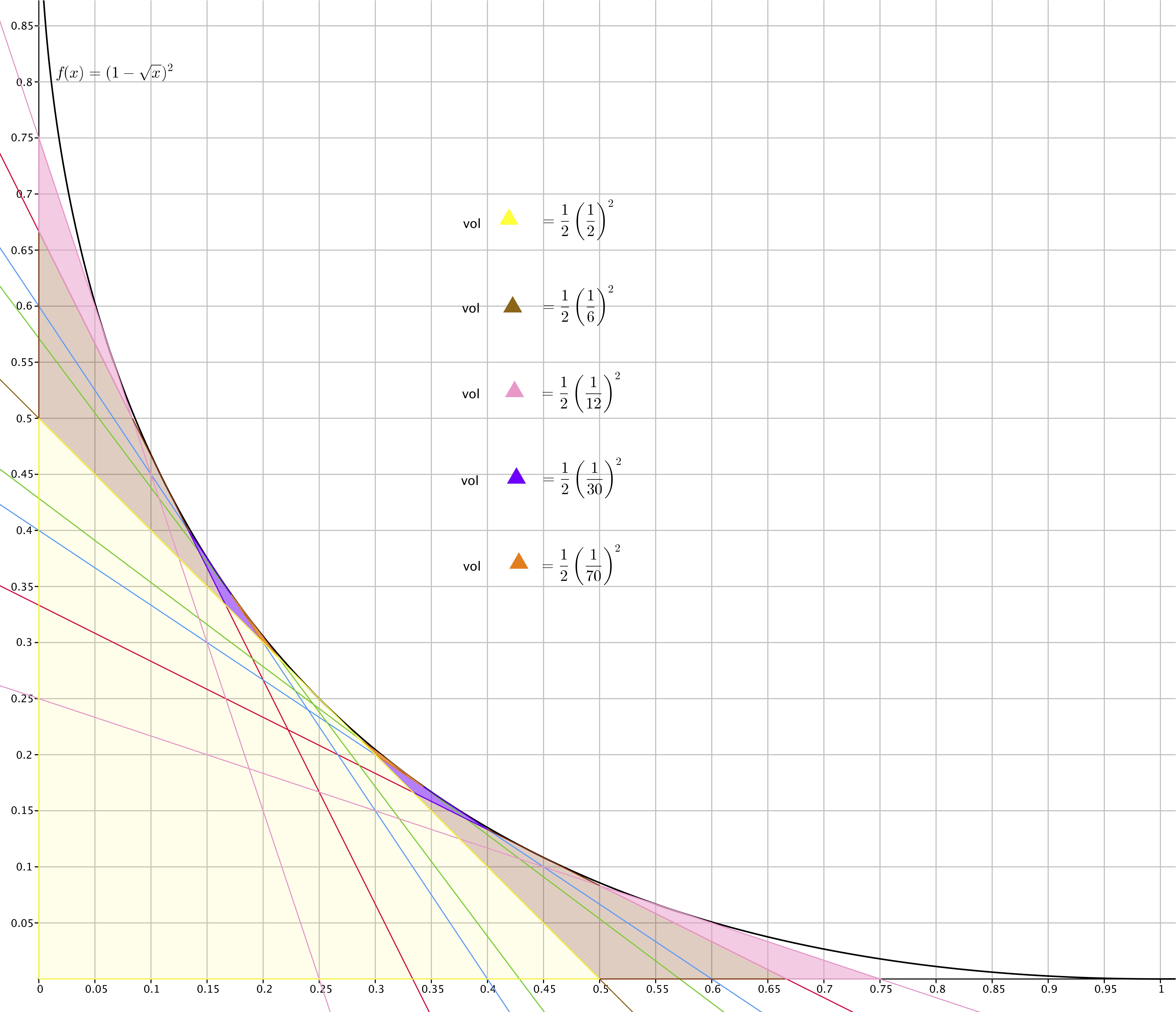}
\caption{Cutting of simplices and $\mu$-values}\label{cuttingsimplices}
\end{figure}
\end{center}
\begin{exa}\label{divergence}
Let $\X = \P^2$. We choose an identification of lattices $N \simeq \Z^2$ and consider the toric $b$-divisor given by the conical function
\[
\phi \colon \R^2 \longrightarrow  \R
\]
defined by 
\[
(a,b) \longmapsto  \begin{cases} \sqrt{|a||b|}  \hspace{1.5cm} &\text{if \hspace{0.1cm} } a,b \geq 0, \\ \min\{a,b\} &\text{otherwise}. \end{cases} 
\]

Note that the graph of this function mimics a cusp at the origin. Now, let $n$ be any positive integer and consider the primitive vector $v = (n,1)$. We have $v_{\alpha} = (n-1,1)$ and $v_{\beta} = (1,0)$.
\begin{center}
\begin{tikzpicture}[scale=1.5]
    \draw [<->,thick] (0,2) node (yaxis) [above] {$y$}
        |- (3,0) node (xaxis) [right] {$x$ \hspace{0.2cm} $v_{\beta} = (1,0)$};
    \draw [->] (0,0) -- (2,0.3) node [right]{$v = (n,1)$};
    \draw [->] (0,0) -- (1,0.3) node [above]{$v_{\alpha} = (n-1,1)$};
\end{tikzpicture}
\end{center}   
Using homogeneity we compute 
\begin{align*}
\mu(v) &= \phi(v) - \phi(v_{\alpha}) - \phi(v_{\beta}) \\&= n\phi\left(1, \frac{1}{n}\right) - (n-1)\phi\left(1, \frac{1}{n-1}\right) - \phi(1,0) \\ &= n \left(\frac{1}{\sqrt{n}} - \frac{1}{\sqrt{n-1}}\right) + \frac{1}{\sqrt{n-1}} \\&\sim \frac{1}{\sqrt{n}} .
\end{align*}
Hence,
\[
\sum_{\overset{v=(v_1,v_2) \in \Z^2}{(v_1,v_2) = 1}}\mu\left(v\right)^2 \gg \sum_{n \geq 0}\frac{1}{n},
\]  
which diverges.
\end{exa} 
\begin{rem} In exactly the same way as in the above example, one can show that a toric $\R$-$b$-divisor on $\P^2$ given by the conical function
\[
\phi \colon \R^2 \longrightarrow \R 
\]
defined by
\[
(a,b) \longmapsto \begin{cases} |a|^{\varepsilon}|b|^{1-\varepsilon} \hspace{1.5cm} & \text{if \hspace{0.1cm} } a,b \geq 0, \\ \min\{a,b\} &\text{otherwise, } \end{cases} 
\]
where $\varepsilon \in (0,1)$, fails to be integrable.  
\end{rem}
We make the following remark. 
\begin{rem}
We have seen in Lemma \ref{differencenef} that the difference of nef toric $b$-divisors is integrable. On the other hand, it is a classical result in algebraic geometry that every divisor on a smooth projective algebraic variety can be written as a difference of very ample divisors (in particular as the difference of nef divisors). The last example shows that this algebro-geometric fact does \emph{not} extend to $b$-divisors. 
\end{rem}

\section{$b$-Convex bodies and global sections of toric $b$-divisors}\label{section5}
In this section, we start by classifying the class of convex bodies which arise from nef toric $b$-divisors. We then proceed to describe the space of global sections of a (not necessarily nef) toric $b$-divisor $\D$ in terms of lattice points in a more general convex set associated to the $b$-divisor, denoted by $\Delta_{\D}$. In the nef case, both convex sets $\Delta_{\D}$ and $\Delta_{\phi_{\D}}$ agree. In this case, we also show that a Hilbert--Samuel type formula holds relating the degree of the nef toric $b$-divisor with the asymptotic growth of the dimension of the space of global sections of multiples of the $b$-divisor. We also define the graded ring of global sections of multiples of a toric $b$-divisor and study the relationship between its finite generation and the polyhedrality of the associated convex set. 

\subsection*{Convex bodies arising as a $b$-convex set}
Recall from Theorem \ref{volume} that to a toric nef $b$-divisor $\D$ one can associate two objects: the concave extension $\phi_{\D} \colon N_{\R} \to \R$ and its stability set $\Delta_{\phi_{\D}} \subseteq M_{\R}$ which is a bounded convex body. A natural question to ask is the following: given a bounded convex set $K$ in $M_{\R}$, does there exist a nef toric $b$-divisor $\D$ such that $K = \Delta_{\phi_{\D}}$? We answer this question in the following proposition. 
 \begin{prop}\label{b-convex} The map given by the assignment
 \[
 \D \longmapsto \Delta_{\phi_{\D}}
 \]
induces a bijective correspondence between the set of nef toric $b$-divisors on $\X$ and the set of bounded convex bodies in $M_{\R}$  whose conical, concave support function $h \colon N_{\R} \to \R$ is rational, i.e.~satisfies that $h\left(N_{\Q}\right) \subseteq \Q$.

Moreover, nef toric $b$-divisors on $\X$ which are Cartier correspond to rational polytopes whose normal fan belongs to $R\left(\Sigma\right)$. These have rational, piecewise linear support functions with regions of linearity given by the normal fan of the rational polytope.    
 \end{prop}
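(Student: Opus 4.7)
The plan is to establish the bijection by constructing the inverse map explicitly, and then specialize to the Cartier case. For the forward direction, given a nef toric $b$-divisor $\D$, Theorem \ref{volume} produces the unique continuous, rational, concave extension $\phi_{\D} \colon N_{\R} \to \R$ of the associated conical $\Q$-concave function $\tilde{\phi}_{\D}$. The stability set $\Delta_{\phi_{\D}}$ is bounded, since every $m \in \Delta_{\phi_{\D}}$ must satisfy $\phi_{\D}(v) \leq \langle m, v \rangle \leq -\phi_{\D}(-v)$ for all $v \in N_{\R}$, and both sides are finite. Legendre--Fenchel duality for conical concave functions identifies the concave support function of $\Delta_{\phi_{\D}}$ with $\phi_{\D}$, which is rational on $N_{\Q}$. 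Injectivity is then immediate: if $\Delta_{\phi_{\D_1}} = \Delta_{\phi_{\D_2}}$, the support functions agree, so $\tilde{\phi}_{\D_1} = \tilde{\phi}_{\D_2}$, and Lemma \ref{equi} forces $\D_1 = \D_2$.

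For surjectivity, given a bounded convex body $K$ with rational conical concave support function $h$, set $\tilde{\phi} \coloneqq h|_{N_{\Q}}$; this is $\Q$-valued, conical and $\Q$-concave, so by Lemma \ref{equi} it defines a toric $b$-divisor $\D$. Its continuous extension $\phi_{\D}$ from Theorem \ref{volume} agrees with $h$ on the dense subset $N_{\Q}$, hence everywhere, so $\Delta_{\phi_{\D}} = \Delta_h = K$ by support function duality.

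The central step—and the main obstacle—is to verify that this $\D$ is nef in the sense of Definition \ref{nefdef}. For each $\Sigma_0 \in R(\Sigma)$ one must produce a refinement $\Sigma' \geq \Sigma_0$ on which the virtual support function $\psi_{\Sigma'}$ (with ray values $h(v_\tau)$) is concave, equivalently on which each wall relation $v_1 + v_2 = \sum_j a_j u_j$ satisfies $h(v_1) + h(v_2) \leq \sum_j a_j h(u_j)$. In dimension two this follows directly from super-additivity and conicity of $h$: for $a \geq 0$ one has $h(v_1)+h(v_2) \leq h(v_1+v_2)=h(au)=a\,h(u)$, while for $a < 0$ the relation $v_1 + v_2 + |a|u = 0$ combined with $h(0)=0$ and iterated super-additivity gives $h(v_1)+h(v_2) \leq a\,h(u)$. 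In higher dimensions I would instead approximate $K$ from outside by a rational polytope $K'$ whose normal fan admits a smooth common refinement $\Sigma' \geq \Sigma_0$ in $R(\Sigma)$, chosen so that the support function $h'$ of $K'$ coincides with $h$ on every ray of $\Sigma'$; then $\psi_{\Sigma'} = h'$ is PL concave on $\Sigma'$ and so $D_{\Sigma'}$ is nef. The delicate point is arranging this cofinally, for which one exploits that rational polytopes with normal fans in $R(\Sigma)$ form a cofinal system and that the rationality of $h$ on $N_{\Q}$ allows exact matching of ray values.

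For the final clause, observe that $\D$ is Cartier precisely when it is determined on some $\Sigma' \in R(\Sigma)$, which under the bijection above translates into $\phi_{\D}$ being piecewise linear on $\Sigma'$, or equivalently into $\Delta_{\phi_{\D}}$ being a rational polytope whose normal fan is coarsened by $\Sigma'$ and hence lies in $R(\Sigma)$. Its support function is then PL concave with regions of linearity given by the cones of the normal fan, completing the description.
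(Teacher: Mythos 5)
Your overall strategy matches the paper's: use Theorem~\ref{volume} to extract the concave extension $\phi_{\D}$ from a nef $\D$, invoke Lemma~\ref{equi} to reconstruct a $b$-divisor from a conical rational function, and appeal to Legendre--Fenchel duality to identify the stability set with the given convex body. You are actually \emph{more} explicit than the paper about the one genuinely nontrivial point: the paper's entire justification of surjectivity onto \emph{nef} $b$-divisors is the single sentence ``Concavity of $h_K$ implies that $\D$ is nef,'' whereas you correctly single this out as the main obstacle. Your two-dimensional argument for it is correct, and in fact proves more than needed---namely that $\psi_{\Sigma'}$ is concave for \emph{every} $\Sigma'\in R(\Sigma)$, since a wall in dimension two is a single ray $u$ with relation $v_1+v_2=au$, and conicity gives $a\,h(u)=h(au)$ exactly, so superadditivity closes the inequality regardless of the sign of $a$.

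However, your higher-dimensional sketch has a genuine gap. You propose to find $K'\supseteq K$ rational with a smooth common refinement $\Sigma'\geq\Sigma_0$ of $\Sigma_0$ and the normal fan $\Sigma_{K'}$, ``chosen so that the support function $h'$ of $K'$ coincides with $h$ on every ray of $\Sigma'$.'' This last requirement cannot be met by the construction as described: one does get $h'(v_\tau)=h(v_\tau)$ for rays $\tau$ of $\Sigma_{K'}$ (and more generally for any ray used as a cutting normal for $K'$), but resolving $\Sigma_{K'}$ to a smooth $\Sigma'$ forces the introduction of new rays $\tau'$ lying in the interior of cones of $\Sigma_{K'}$, and there $h'(v_{\tau'})<h(v_{\tau'})$ strictly whenever $h$ is not linear on that cone --- which is the generic situation, since $h$ is not piecewise linear. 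Consequently $\psi_{\Sigma'}\neq h'$, and concavity of $\psi_{\Sigma'}$ is no longer automatic. The underlying difficulty is visible already at the level of a wall relation $v_1+v_2=\sum_i c_i u_i$: superadditivity gives $h(v_1)+h(v_2)\leq h\bigl(\sum c_iu_i\bigr)$ and $\sum c_ih(u_i)\leq h\bigl(\sum c_iu_i\bigr)$ when the $c_i$ are nonnegative, but these two quantities sit on the same side of the target inequality, so nothing follows unless at most one $c_i$ is positive (which holds automatically only in dimension two). Closing this gap requires a more careful construction --- for instance, producing a smooth $\Sigma'\geq\Sigma_0$ that is a $h$-regular (coherent) triangulation, or arranging an outer approximation whose smooth normal fan introduces no extraneous rays --- none of which is immediate. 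To be fair, the paper hand-waves exactly the same point, so this is a shared omission rather than a defect peculiar to your write-up; but as a freestanding proof it would need to be repaired. A smaller issue: in the Cartier clause you write that $\Delta_{\phi_{\D}}$ is a polytope ``whose normal fan is coarsened by $\Sigma'$ and hence lies in $R(\Sigma)$''; a fan coarsened by some $\Sigma'\in R(\Sigma)$ need not itself be smooth or lie in $R(\Sigma)$ (take a nef but non-ample $D_{\Sigma'}$). The paper's own statement is similarly loose here, but the ``hence'' is a non-sequitur and is worth flagging.
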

 \begin{proof}
 Let $\D$ be a nef toric $b$-divisor. Then $\phi_{\D}(N_{\Q}) \subseteq \Q$ follows from the proof of Theorem \ref{volume} and the definition of $\phi_{\D}$. Conversely, given a bounded convex body $K$ with rational support function $h_K$ as in the statement of the proposition, Lemma \ref{equi} gives a toric $b$-divisor $\D$. Concavity of $h_{K}$ implies that $\D$ is nef. These maps are clearly inverse to each other.\\
 Now, if $\D$ is moreover Cartier, then $\D$ is determined on some toric model $X_{\Sigma'}$ for some $\Sigma' \in R\left(\Sigma\right)$. Hence, $\phi_{\D} = \phi_{D_{\Sigma'}}$ satisfies $\phi_{D_{\Sigma'}}(N_{\Q}) \subseteq \Q$ and is piecewise linear with linearity regions given by $\Sigma'$ and the stability set $\Delta_{\phi_{\D}} = \Delta_{\phi_{D_{\Sigma'}}}$ is a polytope with vertices in $M_{\Q}$. Conversely, starting with a rational polytope $P$ whose normal fan $\Sigma_P$ is in $R\left(\Sigma\right)$, its corresponding support function is piecewise linear with rational slopes and regions of linearity given by $\Sigma_P$. The corresponding Cartier nef toric $b$-divisor is the one determined by the toric divisor $D_P$ on $X_{\Sigma_P}$ associated to the rational polytope~$P$.       
 \end{proof}
 \begin{Def}\label{b-convexset}
A convex set which corresponds to a nef toric $b$-divisor as above is called a \emph{$b$-convex set}. 
 \end{Def}
\subsection*{Global sections of toric $b$-divisors}
\begin{Def}
Let $D = \sum_{i=1}^ra_iD_i$ be a $\Q$-Weil divisor on an algebraic variety $X$. We define the $\Z$-Weil divisor $\lfloor D \rfloor$ by setting 
\[
\lfloor D \rfloor \coloneqq \sum_{i=1}^r \lfloor a_i \rfloor D_i,
\]
where $\lfloor x \rfloor$ denotes the largest integer less than or equal to a rational number $x$.
\end{Def} 
\begin{Def} let $D$ be a $\Q$-Weil divisor on a smooth algebraic variety $X$. Then its \emph{space of global sections} is defined as 
\[
H^0(X, \O(D)) \coloneqq H^0(X, \O(\lfloor D \rfloor )).
\]
\end{Def}
Now, let $D_{\Sigma}= \sum_{\tau \in \Sigma(1)}a_{\tau}D_{\tau}$ be a (not necessarily nef) toric $\Q$-Weil divisor on $X_{\Sigma}$ and let $P_{D_{\Sigma}}\subseteq M_{\R}$ be its associated rational polyhedron. The proposition below is a classical result in toric geometry. However, since we are going to use similar ideas to generalize this to the $b$-setting, we give the proof also in this classical case. 
\begin{prop}\label{chupo}
With notations as above, the space of global sections of $D_{\Sigma}$ is given by 
\begin{align*}
H^0(X, \O(D_{\Sigma})) = \bigoplus_{m \in P_{D_{\Sigma}}\cap M} k \cdot \chi^m. 
\end{align*} 
\end{prop}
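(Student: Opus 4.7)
The plan is to reduce to the classical case of integral toric divisors via the floor operation, and then to observe that taking floors on the coefficients does not change the set of lattice points in the rational polyhedron $P_{D_\Sigma}$.

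First, I would unfold the definition: by construction $H^0(X_\Sigma,\mathcal{O}(D_\Sigma))=H^0(X_\Sigma,\mathcal{O}(\lfloor D_\Sigma\rfloor))$, where $\lfloor D_\Sigma\rfloor=\sum_{\tau\in\Sigma(1)}\lfloor a_\tau\rfloor D_\tau$ is a genuine $\Z$-Weil divisor on the smooth complete toric variety $X_\Sigma$. I would then invoke the standard toric description of global sections of torus-invariant integral Weil divisors: decomposing under the torus action, a global section is a $k$-linear combination of characters $\chi^m$ with $m\in M$ satisfying
\[
\operatorname{div}(\chi^m)+\lfloor D_\Sigma\rfloor\geq 0.
\]
Since $\operatorname{div}(\chi^m)=\sum_{\tau\in\Sigma(1)}\langle m,v_\tau\rangle D_\tau$, this is equivalent to $\langle m,v_\tau\rangle\geq -\lfloor a_\tau\rfloor$ for every ray $\tau\in\Sigma(1)$.

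Second, I would argue that the conditions cut out by $\lfloor a_\tau\rfloor$ and by $a_\tau$ pick out the same lattice points. This is the key step and the only genuinely combinatorial point: for $m\in M$ and $v_\tau\in N$ primitive, the pairing $\langle m,v_\tau\rangle$ is an integer, so
\[
\langle m,v_\tau\rangle\geq -a_\tau\quad\Longleftrightarrow\quad \langle m,v_\tau\rangle\geq \lceil -a_\tau\rceil=-\lfloor a_\tau\rfloor.
\]
Therefore the lattice points $m\in M$ appearing in the decomposition of $H^0(X_\Sigma,\mathcal{O}(\lfloor D_\Sigma\rfloor))$ are precisely those in
\[
P_{D_\Sigma}\cap M=\{m\in M_\R\mid \langle m,v_\tau\rangle\geq -a_\tau\text{ for all }\tau\in\Sigma(1)\}\cap M,
\]
giving the claimed isomorphism $H^0(X_\Sigma,\mathcal{O}(D_\Sigma))\simeq\bigoplus_{m\in P_{D_\Sigma}\cap M}k\cdot\chi^m$.

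Finally, I would note that the direct sum is indeed finite-dimensional because $\Sigma$ is complete: the support $|\Sigma|=N_\R$ together with the inequalities $\langle m,v_\tau\rangle\geq -a_\tau$ forces $P_{D_\Sigma}$ to be bounded (it is contained in $-a_{\max}$ times the polar of any convex neighborhood of $0$ spanned by the $v_\tau$), hence $P_{D_\Sigma}\cap M$ is finite. There is no serious obstacle here; the only subtle point is the innocuous but essential observation that $\langle m,v_\tau\rangle\in\Z$, which is what lets one pass freely between the rational polyhedron $P_{D_\Sigma}$ and its integral truncation $P_{\lfloor D_\Sigma\rfloor}$ at the level of lattice points. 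This is exactly the mechanism that will be mimicked later when passing to $b$-divisors.
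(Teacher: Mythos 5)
Your proof is correct and follows essentially the same route as the paper: unfold the definition via the floor operation, use the torus-equivariant decomposition of $k[M]$ to reduce to characters $\chi^m$, apply the formula $\div(\chi^m)=\sum_{\tau}\langle m,v_\tau\rangle D_\tau$, and conclude by comparing the inequalities cut out by $\lfloor a_\tau\rfloor$ and $a_\tau$. The one place where you are more explicit than the paper is in justifying the equivalence $\langle m,v_\tau\rangle\geq -a_\tau\Leftrightarrow\langle m,v_\tau\rangle\geq -\lfloor a_\tau\rfloor$ via integrality of the pairing; the paper's chain of iffs compresses this into the inequality $-\lfloor a_\tau\rfloor\geq -a_\tau$ and leaves the converse implicit, so your version is if anything a small improvement in rigor.
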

\begin{proof}
In order to simplify notation, we set $D = D_{\Sigma}$. The action of the torus $\T$ on itself given by multiplication induces an action of $\T$ on $k[M]$ as follows: if $t \in \T$ and $f \in k[M]$, then $t \cdot f \in k[M]$ is defined by $p \mapsto f(t^{-1}\cdot p)$. Then, by \cite[Section~4.3]{CLS}, the space $H^0(X_{\Sigma}, \O(\lfloor D \rfloor )) \subseteq k[M]$ is stable under this action and hence, by \cite[Lemma~1.1.16]{CLS}, it is given by 
\begin{align}\label{upsi1}
H^0(X_{\Sigma}, \O(\lfloor D \rfloor )) = \bigoplus_{\chi^m \in H^0(X_{\Sigma}, \O(\lfloor D \rfloor ))}k \cdot \chi^m.
\end{align}
Using that the toric divisor corresponding to a character $\chi^m$ for $m \in M$ is given by 
\[
\text{div}(\chi^m) = \sum_{\tau \in \Sigma(1)}\langle m, v_{\tau}\rangle D_{\tau},
\]
we conclude that 
\begin{align}\label{upsi2}
\chi^m \in H^0(X_{\Sigma}, \O(\lfloor D \rfloor )) \quad &\text{iff} \quad \text{div}(\chi^m) \geq - \lfloor D \rfloor \nonumber \\ 
& \text{iff} \quad \langle m, v_{\tau} \rangle \geq - \lfloor a_{\tau} \rfloor \geq -a_{\tau} \nonumber \\ 
& \text{iff} \quad m \in P_{D_{\Sigma}}.
\end{align}
Combining (\ref{upsi1}) and (\ref{upsi2}), the statement of the proposition follows.
\end{proof}
Now, let $\D= (D_{\Sigma'})_{\Sigma' \in R\left(\Sigma\right)}$ be a toric $b$-divisor which is not necessarily nef. We set 
\[
\lfloor \D \rfloor \coloneqq (\lfloor D_{\Sigma'} \rfloor)_{\Sigma' \in R\left(\Sigma\right)}.
\] Also, note that a rational function $f \in k(\X)^{\times}$ defines a $b$-divisor by setting 
\[b\text{-}\div(f) \coloneqq \left(\div_{\Sigma'}(f)\right)_{\Sigma' \in R\left(\Sigma\right)}.
\]
We now define the space of global sections of a toric $b$-divisor. 
\begin{Def}\label{def:globalsectionstoric}
The \emph{space of global sections} $H^0(X_{\Sigma}, \O(\D))$ of $\D$ is defined by  
\[ H^0(X_{\Sigma}, \O(\D)) \coloneqq \left\{ f \in k(\X) \, \big{|}\, \text{$b$-div}(f) + \lfloor \D \rfloor  \geq 0 \right\} \cup \{0\}.
\]
\end{Def}
The following lemma is a direct consequence of the definitions. 
\begin{lemma}\label{chupito}
The space of global sections of a toric $b$-divisor $\D$ is given as the intersection of the spaces of global sections of the incarnations of $\D$ in the models $X_{\Sigma'}$ as $\Sigma'$ varies in $R\left(\Sigma\right)$, i.e.~the equality
\[
H^0(X_{\Sigma}, \O(\D)) = \bigcap_{\Sigma' \in R\left(\Sigma\right)}H^0\left(X_{\Sigma'}, \O(\lfloor D_{\Sigma'} \rfloor)\right)
\]
is satisfied.
\end{lemma}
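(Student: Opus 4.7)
The proof is essentially a direct unfolding of Definition \ref{def:globalsectionstoric}, and the key observation is that all of the toric varieties $X_{\Sigma'}$ for $\Sigma' \in R(\Sigma)$ share the common function field $k(\X) = k(M)$, since the toric morphisms $X_{\Sigma''} \to X_{\Sigma'}$ are proper and birational. Consequently the spaces $H^0(X_{\Sigma'}, \O(\lfloor D_{\Sigma'}\rfloor))$ all sit naturally as subspaces of $k(\X) \cup \{0\}$, and it makes sense to form their intersection inside this common ambient space.

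First I would recall the standard characterization on each smooth model: for a $\Q$-Weil divisor $D$ on $X_{\Sigma'}$, a nonzero rational function $f \in k(X_{\Sigma'})$ lies in $H^0(X_{\Sigma'}, \O(\lfloor D \rfloor))$ if and only if $\div(f) + \lfloor D \rfloor \geq 0$ as a $\Z$-Weil divisor on $X_{\Sigma'}$. Next, I would observe that by construction a toric Weil $b$-divisor $\pmb{E} = (E_{\Sigma'})_{\Sigma' \in R(\Sigma)}$ is effective, i.e.\ $\pmb{E} \geq 0$, precisely when each incarnation $E_{\Sigma'}$ is effective on its model---this is simply the componentwise meaning of the partial order on the inverse limit $\We(\mathfrak{X}_\Sigma)_\Q$. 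Applying this observation to the Weil $b$-divisor
\[
b\text{-}\div(f) + \lfloor \D \rfloor \;=\; \bigl(\div_{\Sigma'}(f) + \lfloor D_{\Sigma'} \rfloor\bigr)_{\Sigma' \in R(\Sigma)},
\]
the defining condition $f \in H^0(X_\Sigma, \O(\D))$ becomes the simultaneous requirement that $\div_{\Sigma'}(f) + \lfloor D_{\Sigma'} \rfloor \geq 0$ for every $\Sigma' \in R(\Sigma)$. Chaining together the two equivalences yields exactly $f \in \bigcap_{\Sigma' \in R(\Sigma)} H^0(X_{\Sigma'}, \O(\lfloor D_{\Sigma'} \rfloor))$.

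Since this is a bookkeeping statement, there is no genuine obstacle; the only point worth flagging is that $b\text{-}\div(f)$ is a well-defined toric Weil $b$-divisor, i.e.\ that the tuple $(\div_{\Sigma'}(f))_{\Sigma'}$ really is compatible under push-forward, so that addition with the Weil $b$-divisor $\lfloor \D \rfloor$ is meaningful. This compatibility follows from the functoriality of $\div$ under proper birational morphisms, and can be noted at the start of the argument so that the componentwise manipulations are justified.
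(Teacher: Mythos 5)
The paper gives no written proof, stating only that the lemma ``is a direct consequence of the definitions,'' and your argument is precisely the careful unfolding that justifies that claim: you invoke Definition~\ref{def:globalsectionstoric}, interpret $b\text{-}\div(f)+\lfloor\D\rfloor\geq 0$ componentwise in $\We(\mathfrak{X}_{\Sigma})_{\Q}$, and match each component condition with the classical characterization of global sections on the model $X_{\Sigma'}$. This is the intended reasoning, and your remarks about the common function field and the compatibility of $(\div_{\Sigma'}(f))_{\Sigma'}$ under push-forward are exactly the right points to flag; the proof is correct and essentially the same as the paper's.
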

\begin{Def}\label{hjhj} Let $\D$ be a toric $b$-divisor which is not necessarily nef and let $\p \colon N_{\Q} \to \Q$ be its associated conical function. We define
\[
\Delta_{\D} \coloneqq \bigcap_{\Sigma' \in R\left(\Sigma\right)}P_{D_{\Sigma'}} = \left\{ m \in M_{\R} \, \big{|}\, \langle m , v_{\tau} \rangle \geq -\p(v_{\tau}), \; \forall v_{\tau} \in N^{\text{prim}} \right\}. 
\]
This is a bounded convex set. 
\end{Def}
We make the following remarks. 
\begin{rem}
(1) If $\D$ is nef, we have that $\Delta_{\D} = \Delta_{\phi_{\D}}$, where $ \Delta_{\phi_{\D}}$ denotes the stability set of the concave extension $\phi_{\D} \colon N_{\R} \to \R$. In this case, we will use either notation interchangeably. 

\noindent(2) In the non-nef case, $\Delta_{\D}$ does not necessarily correspond to a nef toric $b$-divisor as in Proposition \ref{b-convex}. Indeed, any bounded convex body $K$ in $M_{\R}$ is of the form $\Delta_{\D}$ for some toric $b$-divisor $\D$. This follows from the fact that any convex body is the intersection of a (possibly infinite) set of rational half spaces, the so called \enquote{rational supporting hyperplanes}. Hence, while in the nef case we may recover the toric $b$-divisor $\D$ from the convex set $\Delta_{\phi_{\D}}$, in the non-nef case, we may not. This is also true in the classical case: a nef toric divisor $D$ on a smooth and complete toric variety is completely determined by its corresponding polytope $P_D$, whereas in the non-nef case, the polytope does not capture all of the Cartier data of $D$. Nevertheless, the convex set $\Delta_{\D}$ gives us important algebro-geometric information. Like in the classical case, it encodes the global sections of the $b$-divisor, as can be seen by the following proposition.    
\end{rem}
\begin{prop}\label{globalsectionstoric}
Let $\D$ be a toric $b$-divisor on $\X$. Its space of global sections is given by
\[
H^0(\X, \O(\D)) = \bigoplus_{m \in \Delta_{\D} \cap M} k \cdot \chi^m.
\]
\end{prop}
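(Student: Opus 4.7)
The plan is to chain together the two earlier lemmas: Lemma \ref{chupito}, which expresses $H^0(\X, \O(\D))$ as the intersection of the spaces of global sections on each toric model, and Proposition \ref{chupo}, which gives each such space as a direct sum indexed by lattice points of the corresponding polytope $P_{D_{\Sigma'}}$. Combined with Definition \ref{hjhj}, which identifies $\Delta_{\D}$ precisely as $\bigcap_{\Sigma' \in R(\Sigma)} P_{D_{\Sigma'}}$, this should yield the claim almost formally.

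More concretely, I would proceed as follows. First, invoke Lemma \ref{chupito} to write
\[
H^0(\X, \O(\D)) = \bigcap_{\Sigma' \in R(\Sigma)} H^0\bigl(X_{\Sigma'}, \O(\lfloor D_{\Sigma'} \rfloor)\bigr).
\]
Next, apply Proposition \ref{chupo} to each factor, obtaining
\[
H^0\bigl(X_{\Sigma'}, \O(\lfloor D_{\Sigma'} \rfloor)\bigr) = \bigoplus_{m \in P_{D_{\Sigma'}} \cap M} k\cdot \chi^m,
\]
where one uses that for $m \in M$ the pairing $\langle m, v_{\tau}\rangle$ is an integer, so the conditions $\langle m, v_{\tau}\rangle \geq -\lfloor a_{\tau}\rfloor$ and $\langle m, v_{\tau}\rangle \geq -a_{\tau}$ coincide. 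All these subspaces sit inside the common ambient space $k[M] \subseteq k(\X)$.

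The key step — and the only one requiring mild justification — is the commutation of the intersection with the direct sum. Since the characters $\{\chi^m\}_{m \in M}$ are $k$-linearly independent in $k[M]$, a subspace of the form $\bigoplus_{m \in S} k\cdot \chi^m$ is characterized as the span of the characters indexed by $S$, and therefore for any collection $\{S_{\Sigma'}\}_{\Sigma' \in R(\Sigma)}$ of subsets of $M$ we have
\[
\bigcap_{\Sigma'} \bigoplus_{m \in S_{\Sigma'}} k\cdot \chi^m = \bigoplus_{m \in \bigcap_{\Sigma'} S_{\Sigma'}} k\cdot \chi^m.
\]
Applying this with $S_{\Sigma'} = P_{D_{\Sigma'}} \cap M$ and then using Definition \ref{hjhj} gives the desired identification with $\bigoplus_{m \in \Delta_{\D} \cap M} k \cdot \chi^m$. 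Finally, the finite-dimensionality asserted in the main theorem of this section follows because $\Delta_{\D}$ is a bounded convex set in $M_{\R}$, hence $\Delta_{\D} \cap M$ is finite. I expect no real obstacle; the only subtlety is the formal argument for commuting the (possibly infinite) intersection with the direct sum, which rests cleanly on the linear independence of the characters $\chi^m$.
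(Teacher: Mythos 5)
Your proof is correct and follows essentially the same route as the paper: Lemma \ref{chupito} plus Proposition \ref{chupo}, then matching lattice points against Definition \ref{hjhj}. The only cosmetic difference is that the paper first invokes $\T$-stability (via \cite[Lemma~1.1.16]{CLS}, as in the proof of Proposition \ref{chupo}) to get the direct sum decomposition of $H^0(\X,\O(\D))$ directly and then identifies which characters belong to it, whereas you apply the direct sum decomposition model by model and commute the intersection with the direct sum using the linear independence of characters — both arguments rest on exactly the same fact.
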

\begin{proof} By Lemma \ref{chupito} and the proof of Proposition \ref{chupo}, the space $H^0(\X, \O(\D))\subseteq k[M]$ is $\T$-stable and can be written as  
\[
H^0(\X, \O(\D)) = \bigoplus_{\chi^m \in H^0(\X, \O(\D))}k \cdot \chi^m. 
\]
We have 
\begin{align*}
\chi^m \in H^0(\X, \O(\D)) \quad &\text{ iff } \quad \div_{\Sigma'}(\chi^m) \geq - \lfloor D_{\Sigma'}\rfloor, \, \forall \Sigma' \in R\left(\Sigma\right)\\ &\text{ iff } \quad \langle m, v_{\tau} \rangle \geq - \lfloor\p(v_{\tau})\rfloor, \, \forall \tau \in \Sigma'(1) \,\text{ and } \forall \Sigma' \in R\left(\Sigma\right) \\ &\text{ iff } \quad\langle m, v_{\tau} \rangle \geq -\lfloor\p(v_{\tau})\rfloor, \, \forall v_{\tau} \in N^{\text{prim}} \\ &\text{ iff }\quad \langle m, v_{\tau} \rangle \geq -\p(v_{\tau}), \, \forall v_{\tau} \in N^{\text{prim}}\\ &\text{ iff } \quad m \in \Delta_{\D}, 
\end{align*}
proving the statement of the proposition. 
\end{proof}

\subsection*{Hilbert--Samuel formula}
The following is a Hilbert--Samuel type formula for toric nef $b$-divisors. 
\begin{theorem}\label{hilbert-samuel}
Let $\D$ be a nef toric $b$-divisor. We denote by $h^0(\ell\D)$ the dimension of the space of global sections $H^0(\X, \ell\D)$ of an integral multiple $\ell\D$ of $\D$. Then we have a Hilbert--Samuel type formula 
\begin{align}\label{hsformula}
\D^n = \lim_{\ell \to \infty} \frac{h^0(\ell\D)}{\ell^n/n!}.
\end{align}
\end{theorem}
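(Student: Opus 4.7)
The plan is to reduce the formula to a classical lattice-point counting statement for bounded convex bodies, via Proposition \ref{globalsectionstoric} and Theorem \ref{volume}.

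First I would observe that the conical function associated to the $b$-divisor $\ell\D$ is $\ell\tilde\phi_{\D}$, whose unique continuous concave extension is $\ell\phi_{\D}$, and whose stability set is the dilate $\ell\,\Delta_{\phi_{\D}}$. By Proposition \ref{globalsectionstoric}, the global sections of $\ell\D$ are encoded by lattice points in the convex set $\Delta_{\ell\D}$, which in the nef case coincides with $\ell\,\Delta_{\phi_{\D}}$ since $\Delta_{\ell\D} = \Delta_{\ell\phi_{\D}}$. The floor operation $\lfloor \ell D_{\Sigma'}\rfloor$ appearing in Definition \ref{def:globalsectionstoric} poses no issue here: for $m\in M$ and $v_\tau\in N^{\prim}$ the pairing $\langle m,v_\tau\rangle$ is an integer, so the condition $\langle m,v_\tau\rangle \geq -\lfloor \ell\phi_{\D}(v_\tau)\rfloor$ is equivalent to $\langle m,v_\tau\rangle \geq -\ell\phi_{\D}(v_\tau)$. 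Hence
\[
h^0(\ell\D) \;=\; \#\bigl(\ell\,\Delta_{\phi_{\D}} \cap M\bigr).
\]

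Next I would invoke the classical asymptotic for lattice-point counting in bounded convex bodies: for any bounded convex body $K \subseteq M_{\R}\simeq \R^n$ one has
\[
\lim_{\ell\to\infty} \frac{\#(\ell K \cap M)}{\ell^n} \;=\; \vol(K),
\]
where volume is taken with respect to the Haar measure normalized so that $M$ has covolume one. If $K$ is full-dimensional this follows from sandwiching $K$ between unions of $\pm\varepsilon$-dilated unit cubes and letting $\varepsilon\to 0$; if $K$ lies in a proper affine subspace then $\vol(K)=0$ while $\#(\ell K\cap M) = O(\ell^{n-1})$, so the limit is also zero. Applying this to $K = \Delta_{\phi_{\D}}$, which is bounded by the nefness of $\D$ (cf. Theorem \ref{volume}), yields
\[
\lim_{\ell\to\infty} \frac{h^0(\ell\D)}{\ell^n} \;=\; \vol\bigl(\Delta_{\phi_{\D}}\bigr).
\]

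Finally I would combine this with the volume formula $\D^n = n!\,\vol(\Delta_{\phi_{\D}})$ established in Theorem \ref{volume} to conclude
\[
\lim_{\ell\to\infty} \frac{h^0(\ell\D)}{\ell^n/n!} \;=\; n!\,\vol\bigl(\Delta_{\phi_{\D}}\bigr) \;=\; \D^n,
\]
as desired. The only potentially delicate point is the lattice-point asymptotic for convex sets that are not polytopes (since $\Delta_{\phi_{\D}}$ need not be polyhedral, its boundary may be curved), but this is handled uniformly by the sandwiching argument above, which depends only on boundedness and not on polyhedrality; so no genuine obstacle arises.
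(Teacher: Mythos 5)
Your proof is correct and reaches the conclusion by essentially the same reduction as the paper (via Proposition~\ref{globalsectionstoric} one identifies $h^0(\ell\D)$ with the lattice-point count in $\ell\,\Delta_{\phi_\D}$, and then converts lattice-point asymptotics into a volume), but the final step is handled differently and in fact more directly. The paper's proof only invokes the identity $\vol(S)=\vol_L(S)$ for \emph{polytopes}, and therefore has to approximate $\Delta_{\phi_\D}$ by the decreasing family of polytopes $P_{D_{\Sigma'}}$ and then interchange the limit over $\Sigma'$ with the limit over $\ell$ — an interchange that the paper asserts but does not justify (one would typically want some uniformity, e.g.\ a Dini-type argument, to make this rigorous). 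You sidestep this entirely by applying the lattice-point counting asymptotic directly to the bounded convex body $\Delta_{\phi_\D}$, which is legitimate: the statement $\lim_{\ell\to\infty}\#(\ell K\cap M)/\ell^n=\vol(K)$ holds for any bounded convex set $K$ because such sets are Jordan measurable (the boundary of a convex set has Lebesgue measure zero), and your cube-sandwiching argument is the standard proof. Your observation that the floor $\lfloor\ell D_{\Sigma'}\rfloor$ is harmless because $\langle m,v_\tau\rangle\in\Z$ is precisely what Proposition~\ref{globalsectionstoric} uses, and your treatment of the degenerate (lower-dimensional $\Delta_{\phi_\D}$) case is correct. Net effect: your argument proves the same statement with fewer moving parts and avoids the unjustified limit swap, at the modest cost of invoking the lattice-point asymptotic for general convex bodies rather than only for polytopes.
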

\begin{proof}
The so called \emph{lattice volume} $\vol_L$ of a convex set $S \subseteq M_{\R}$ is defined by 
\[
\vol_L (S) \coloneqq \lim_{\ell \to \infty} \frac{ \# \ell S \cap M }{\ell^n}.
\]
In the case where $S$ is a polytope, we have $\vol(S) = \vol_L(S)$ (see e.g. \cite[Section~3]{HKP}). Let $\phi$ be the concave function corresponding to $\D$. Then, since $\D$ is nef, using Proposition \ref{globalsectionstoric} we have that 
\[
\lim_{\Sigma'\in R(\Sigma)}h^0\left(X_{\Sigma'},\ell D_{\Sigma'}\right) = h^0\left(\X, \ell \D\right),
\]
for all $\ell \in \N$. Moreover, the convex set $\Delta_{\D} = \Delta_{\phi}$ can be approximated by polytopes. Since the operator \enquote{$\vol$} is continuous, the sequence of equalities
\begin{align*}
\frac{\D^n}{n!} & = \vol\left(\Delta_{\phi}\right) = \lim_{\Sigma' \in R(\Sigma)}\vol\left(P_{D_{\Sigma'}}\right) \\
& = \lim_{\Sigma' \in R(\Sigma)} \lim_{\ell \to \infty} \frac{\#\ell P_{D_{\Sigma'}}\cap M}{\ell^n} \\ & = \lim_{\ell \to \infty}\lim_{\Sigma' \in R(\Sigma)} \frac{\#\ell P_{D_{\Sigma'}}\cap M}{\ell^n}\\
& = \lim_{\ell\to \infty} \frac{\# \ell \Delta_{\phi}\cap M}{\ell^n}
\end{align*}
is satisfied, thus concluding the proof of the Theorem. 
\end{proof}
\subsection*{Ring of global sections of multiples of a toric $b$-divisor}
As in the classical case, a natural object to study is the ring of global sections of multiples of a toric $b$-divisor. In order to define this, note that given two toric $b$-divisors $\D$ and $\pmb{E}$ together with two rational functions $f$ in $H^0(\X,\D)$ and $g$ in $H^0(\X,\pmb{E})$, it is easy to see that the product $f\cdot g$ is an element in $H^0(\X, \D+\pmb{E})$. Hence, we get a map 
\begin{align*}
H^0(\X,\D) \otimes H^0(\X,\pmb{E}) \longrightarrow H^0(\X, \D+\pmb{E}).
\end{align*}
 \begin{Def}\label{b-gradedring}
Let $\D$ be a nef toric $b$-divisor on $\X$. We define the \emph{ring of global sections of multiples of $\D$} as the graded ring
\[
b\text{-}R(\D) \coloneqq \bigoplus_{\ell \geq 0}H^0\left(\X, \O(\ell\D)\right).
\]
\end{Def}
It is a natural question to ask whether $b\text{-}R(\D)$ is finitely generated as a graded $k$-algebra. We answer this question in the following proposition. 
\begin{prop}\label{fingen}
Let $\D$ be a nef toric $b$-divisor and let $\phi \colon N_{\R} \to \R$ be its corresponding concave function. Then the ring $b\text{-}R(\D)$ is finitely generated if and only if $\D$ is Cartier, i.e.~if $\Delta_{\phi}$ is a rational polytope. 
\end{prop}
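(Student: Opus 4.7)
The plan is to realize the graded algebra $b\text{-}R(\D)$ as the semigroup algebra attached to the lattice points in the cone over $\Delta_{\phi}$, and then invoke the classical dictionary between finite generation of semigroup algebras and rational polyhedrality. First I would observe that for every $\ell \geq 0$, the $b$-divisor $\ell\D$ is nef with associated concave function $\ell\phi$, and hence $\Delta_{\ell\D} = \ell\Delta_{\phi}$. Combining this with Proposition \ref{globalsectionstoric}, one obtains an isomorphism of $(M\times \Z)$-graded $k$-algebras
\[
b\text{-}R(\D) \;\simeq\; \bigoplus_{\ell \geq 0}\bigoplus_{m \in \ell\Delta_{\phi}\cap M} k\cdot \chi^{m}t^{\ell} \;=\; k[S_{\D}],
\]
where $S_{\D} \coloneqq C(\Delta_{\phi}) \cap (M\times \Z_{\geq 0})$ and $C(\Delta_{\phi})\subseteq M_{\R}\times \R_{\geq 0}$ denotes the closed cone over $\Delta_{\phi}$. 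The fine grading by $(m,\ell)$ then implies that $k[S_{\D}]$ is finitely generated as a $k$-algebra if and only if $S_{\D}$ is finitely generated as a semigroup.

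For the direction ($\Leftarrow$), if $\Delta_{\phi}$ is a rational polytope then $C(\Delta_{\phi})$ is a rational polyhedral cone in $M_{\R}\times\R$ with respect to the lattice $M\times\Z$, and Gordan's lemma gives finite generation of $S_{\D}$. For ($\Rightarrow$), I would suppose that $S_{\D}$ is generated by finitely many elements $(m_{1},\ell_{1}), \ldots, (m_{r},\ell_{r})$ with $\ell_{i}>0$. Then every $(m,\ell)\in S_{\D}$ with $\ell>0$ satisfies
\[
\frac{m}{\ell}\in P\coloneqq \conv\Bigl(\tfrac{m_{1}}{\ell_{1}},\dotsc,\tfrac{m_{r}}{\ell_{r}}\Bigr),
\]
a rational polytope contained in $\Delta_{\phi}$. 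Since the points $\{m/\ell: (m,\ell)\in S_{\D},\,\ell>0\}$ are dense in $\Delta_{\phi}$ (by a standard Ehrhart-style lattice-point density argument, using that $\phi$ is $\Q$-valued on $N_{\Q}$ so that the affine hull of $\Delta_{\phi}$ is rational), the inclusion $P\subseteq \Delta_{\phi}$ is forced to be an equality. Hence $\Delta_{\phi}=P$ is a rational polytope. The final equivalence between "$\Delta_{\phi}$ is a rational polytope" and "$\D$ is Cartier" follows directly from Proposition \ref{b-convex}: any rational polytope admits a smooth refinement of its normal fan lying in $R(\Sigma)$, yielding a toric model on which $\D$ is determined.

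The main obstacle is the density statement in the ($\Rightarrow$) direction. In the full-dimensional (big) case this is standard, but some care is needed when $\Delta_{\phi}$ is lower-dimensional: one must restrict to the rational affine subspace spanned by $\Delta_{\phi}$ and apply the density of $\ell\Delta_{\phi}\cap M$ inside $\ell\Delta_{\phi}$ with respect to the induced sublattice. Everything else in the argument reduces to routine manipulation of semigroup algebras and Gordan's lemma.
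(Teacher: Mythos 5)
Your proof is correct, and the semigroup-algebra framing you adopt is a cleaner formalization of the same underlying combinatorics that the paper uses. The paper, for the direction ``$\D$ Cartier implies finite generation,'' simply reduces to a classical finite generation result for section rings of toric $\Q$-Cartier divisors and cites it; you prove the same thing directly by observing that $b\text{-}R(\D)\simeq k[S_{\D}]$ with $S_{\D}=C(\Delta_{\phi})\cap(M\times\Z_{\geq 0})$ and applying Gordan's lemma to the rational polyhedral cone $C(\Delta_\phi)$, which is exactly the mechanism underlying the classical result. For the converse, the paper argues directly with the convex hull $P$ of the generators $m_1,\dotsc,m_r$ and the polytopes $P_{D_{\Sigma'}}$, asserting (somewhat tersely) that $\Delta_\phi=\lim_{\Sigma'}P_{D_{\Sigma'}}=\tfrac{1}{\ell}P$; your argument packages the same idea more transparently: the fine $(M\times\Z)$-grading reduces finite generation of the algebra to finite generation of $S_{\D}$, and then the observation that any $(m,\ell)\in S_{\D}$ with $\ell>0$ has $m/\ell\in P\coloneqq\conv\bigl(m_1/\ell_1,\dotsc,m_r/\ell_r\bigr)\subseteq\Delta_\phi$, combined with density of the scaled lattice points, forces $P=\Delta_\phi$. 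You also correctly flag the one place where extra care is needed (the lower-dimensional case, where one must use that the affine hull of $\Delta_\phi$ is rational because $\phi$ takes rational values on $N_{\Q}$), and your closing appeal to Proposition~\ref{b-convex} to pass between ``$\Delta_\phi$ rational polytope'' and ``$\D$ Cartier'' is exactly what the paper implicitly relies on. In short, the content is the same; your version is somewhat more explicit about the semigroup-algebra dictionary and Gordan's lemma, which makes both implications uniform and self-contained.
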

\begin{proof}
Let $\D$ be a Cartier toric $b$-divisor. By Proposition \ref{b-convex}, the corresponding convex set $\Delta_{\D} = \Delta_{\phi_{\D}}$ is a rational polytope and it corresponds to the polytope $P_{D_{\Sigma''}}$ of a Cartier toric $\Q$-divisor $D_{\Sigma''}$ on a toric model $X_{\Sigma''}$ for some fan $\Sigma''~\in~R\left(\Sigma\right)$. Clearly, in this case, the ring $b\text{-}R(\D)$ is finitely generated if and only if $R(D_{\Sigma''})$ is finitely generated and the finite generatedness of the latter object is a classical result (see e.g. \cite{E}). \\
To see the \enquote{only if} part, suppose that $b\text{-}R(\D)$ is finitely generated. Let $m_1, \dotsc, m_r$ be elements in the lattice $M$ be such that $\chi^{m_i}$ generate $b\text{-}R(\D)$ for $i=1, \dotsc, r$. 
Let $\tilde{\Sigma} \geq \Sigma$ be sufficiently large so that $\chi^{m_i} \in R\left(D_{\tilde{\Sigma}}\right)$ for all $i=1, \dotsc, r$. Then, for some multiple $\ell \in \N$, we have
\[
P = \ell P_{D_{\tilde{\Sigma}}}, 
\]
where $P =  \convhull(m_1, \dotsc, m_r)$. Hence, the normal fan $\Sigma_P$ of $P$ is a smooth subdivision of $\Sigma$, i.e.~$\Sigma_P \in R\left(\Sigma\right)$. Moreover, for every $\Sigma' \geq \Sigma_P$, we have 
\[
\ell P_{D_{\Sigma'}} = \ell P_{D_{\tilde{\Sigma}}}=P.
\]
Indeed, $P_{D_{\Sigma'}}$ is the convex hull of the generators of global sections. Hence, since we have that $\Delta_{\phi} = \lim_{\Sigma'}P_{D_{\Sigma}} = \frac{1}{\ell}P$, the statement of the proposition follows.
\end{proof}

We end this subsection by formulating the following question regarding the polynomiality of the Hilbert function of a graded algebra. 
\begin{question}
Is it true that the Hilbert function of a \emph{not} finitely generated graded algebra is never a polynomial?
\end{question}

\subsection*{The $b$-Cox ring}
A further interesting object to study is the $b$-Cox ring of a complete, smooth toric variety. Before giving the definition, we recall the notion of the classical Cox ring of a toric variety. 
\begin{Def}
The \emph{Cox ring} of a toric variety $\X$ is the polynomial ring 
\[
\text{Cox}(\X) \coloneqq k\left[\left\{x_{\tau} \, \big{|}\, \tau \in \Sigma(1) \right\}\right].
\]
\end{Def}
This ring is multigraded by the class group $\text{Cl}(\X)$ of $\X$. Indeed, we have the following short exact sequence: 
\[
0 \longrightarrow M \longrightarrow \Z^{\Sigma(1)} \longrightarrow \text{Cl}(\X) \longrightarrow 0,
\] 
where $\Z^{\Sigma(1)}$ is the free group generated by symbols $y_{\tau}$ for $\tau \in \Sigma(1)$. The first morphism sends $m \in M$ to $\text{div}(\chi^m) = \sum_{\tau} \langle m, v_{\tau} \rangle y_{\tau}$ and the second one sends $y_{\tau}$ to the class $[D_{\tau}] \in \text{Cl}(\X)$ of the corresponding toric divisor. The degree of $y_{\tau}$ is just $[D_{\tau}]$. \\

We now define the \emph{$b$-Cox ring} and the \emph{$b$-class group} of $\X$.
\begin{Def}The \emph{$b$-Cox ring} of $\X$ is the following polynomial ring in infinitely many variables: 
\[
\Cox\left(\mathcal{X}_{\Sigma}\right) \coloneqq k\left[\left\{x_v \, \big{|}\, v \in N^{\prim}\right\}\right].
\]
\end{Def}
\begin{Def}
The \emph{$b$-class group} of $\X$ is defined as the inverse limit of class groups of toric varieties 
 \[
\Cl\left(\mathcal{X}_{\Sigma}\right) \coloneqq \varprojlim_{\Sigma' \in R\left(\Sigma\right)}\Cl\left(X_{\Sigma'}\right).
\]
with maps given by the push-forward map of divisor classes of Weil toric divisors. 
The class of a toric $b$-divisor $\D = \left(D_{\Sigma'}\right)_{\Sigma' \in R\left(\Sigma\right)}$ in $\Cl\left(\mathcal{X}_{\Sigma}\right)$ is defined by 
\[
[\D] \coloneqq \big{(}\big{[}\lfloor D_{\Sigma'}\rfloor \big{]}\big{)}_{\Sigma' \in R\left(\Sigma\right)}.
\]
\end{Def}
Note that we are taking only integer coefficients.
As in the classical case, we have a natural grading on $\Cox\left(\mathcal{X}_{\Sigma}\right)$ by the $b$-class group $\Cl\left(\mathcal{X}_{\Sigma}\right)$. Indeed, we have an exact sequence 
\[
0 \longrightarrow M \longrightarrow \Z^{N^{\prim}} \longrightarrow \Cl\left(\mathcal{X}_{\Sigma}\right) \longrightarrow 0
\]
where, as before, the first morphism is given by the assignment 
\[
m \longmapsto (\langle m, v \rangle )_{v \in N^{\prim}},
\]
for $m \in M$, and the second morphism by the assignment
\[
 (m_v)_{v \in N^{\prim}} \longmapsto \left[\sum_vm_vD_v\right] = \left(\left[\sum_{\tau \in \Sigma'(1)}m_{\tau_v}D_{\tau_v}\right]\right)_{\Sigma' \in R\left(\Sigma\right)}.
\]
Given $a \in N^{\prim}$, an element $x^a = \prod_vx_v^{a_v}$ in $\Cox\left(\mathcal{X}_{\Sigma}\right)$, the degree of $x^a$ is defined by
\[
\text{deg}(x^a) \coloneqq \left[\sum_va_vD_v\right] = \left(\left[\sum_{\tau \in \Sigma'(1)}a_{v_{\tau}}D_{v_{\tau}}\right]\right)_{\Sigma' \in R\left(\Sigma\right)} \in \Cl\left(\mathcal{X}_{\Sigma}\right).
\]
Generalizing the result in \cite[Theorem~1.4]{LV} for the case of $b$-divisors, we have the following proposition. 
\begin{prop}
The vector space $H^0(X_{\Sigma}, \D)$ of global sections of $\D$ is isomorphic to the degree $[\D]$ part of $\Cox\left(\mathcal{X}_{\Sigma}\right)$.
\end{prop}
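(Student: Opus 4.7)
\emph{Proof proposal.} The plan is to mimic the classical Cox-theorem argument for toric varieties, using Proposition \ref{globalsectionstoric} as the starting point. Recall that it identifies
\[
H^0(X_\Sigma, \O(\D)) = \bigoplus_{m \in \Delta_\D \cap M} k \cdot \chi^m.
\]
I will define a map $\Psi \colon H^0(X_\Sigma, \O(\D)) \to \Cox(\mathcal{X}_\Sigma)_{[\D]}$ by sending a character $\chi^m$ to the monomial $x^{a(m)}$ with exponents
\[
a_v(m) := \langle m, v\rangle + \lfloor \tilde\phi(v) \rfloor, \qquad v \in N^{\prim},
\]
and extending $k$-linearly, where $\tilde\phi$ is the conical function associated with $\D$ via Lemma \ref{equi}.

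To check that $\Psi$ is well-defined I would proceed in two steps. First, since $m \in M$ and $v \in N$, the pairing $\langle m, v\rangle$ is an integer; the condition $m \in \Delta_\D$ means $\langle m, v\rangle \geq -\tilde\phi(v)$, which, combined with integrality of $\langle m, v\rangle$, is equivalent to $\langle m, v\rangle \geq -\lfloor\tilde\phi(v)\rfloor$, so $a_v(m) \geq 0$. Second, for each model $\Sigma' \in R(\Sigma)$ I compute
\[
\sum_{\tau \in \Sigma'(1)} a_{v_\tau}(m)\, D_\tau \;=\; \div_{\Sigma'}(\chi^m) + \lfloor D_{\Sigma'}\rfloor,
\]
whose class in $\Cl(X_{\Sigma'})$ equals $[\lfloor D_{\Sigma'}\rfloor]$. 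Because these identities are compatible under push-forward across refinements, the resulting tuple is $[\D] \in \Cl(\mathcal{X}_\Sigma)$, so $\deg(x^{a(m)}) = [\D]$.

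Injectivity of $\Psi$ is immediate: different $m \in M$ produce different tuples $(a_v(m))_v$, since $N^{\prim}$ spans $N_\R$ and the pairing with $N^{\prim}$ determines $m$. For surjectivity, given $x^a$ of degree $[\D]$, the equality of classes in each $\Cl(X_{\Sigma'})$ yields, for every $\Sigma' \in R(\Sigma)$, a character $m_{\Sigma'} \in M$ with $a_{v_\tau} = \langle m_{\Sigma'}, v_\tau\rangle + \lfloor \tilde\phi(v_\tau)\rfloor$ for all $\tau \in \Sigma'(1)$. Completeness of $\Sigma'$ forces the rays to span $N_\R$, so $m_{\Sigma'}$ is unique; comparing the defining equations on a common refinement gives $m_{\Sigma''} = m_{\Sigma'}$ whenever $\Sigma'' \geq \Sigma'$. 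These patch into a single $m \in M$ satisfying $a_v = a_v(m)$ for every $v \in N^{\prim}$. The nonnegativity $a_v \geq 0$, run back through Step 1 of well-definedness, places $m$ in $\Delta_\D \cap M$, so $x^a = \Psi(\chi^m)$.

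The main obstacle I anticipate is the patching step for surjectivity: one must exploit both the completeness of each $\Sigma'$ (to obtain uniqueness of $m_{\Sigma'}$) and the tower structure of $R(\Sigma)$ together with the compatibility of $\div(\chi^m)$ under push-forward (to see that the local $m_{\Sigma'}$ assemble into a genuine $m \in M$). Everything else is a direct transcription of the classical computation underlying Cox's theorem, with $\lfloor\tilde\phi\rfloor$ playing the role of the coefficient vector of a single toric divisor.
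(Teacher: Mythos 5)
Your proposal is correct and takes essentially the same approach as the paper's proof: both match $\chi^m$ (for $m \in \Delta_{\D} \cap M$) with a monomial of degree $[\D]$ via the exponent vector $\langle m, v\rangle + \lfloor a_v\rfloor$, where $a_v$ is the coefficient of $D_v$ in $\D$, check nonnegativity and degree, and argue bijectivity, with your patching argument for surjectivity a more careful version of the paper's ``there exists an $m \in M$.'' Note only that since Lemma \ref{equi} sets $a_v = -\tilde\phi(v)$, the floor in your formula should read $\lfloor -\tilde\phi(v)\rfloor$ rather than $\lfloor\tilde\phi(v)\rfloor$; this sign slip originates in Definition \ref{hjhj} and in the displayed chain of Proposition \ref{globalsectionstoric}, which you followed faithfully, so the argument is sound once all signs are read consistently.
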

\begin{proof}
Let $\D$ be a toric $b$-divisor given by  $\D = \sum_{i \in \N}a_{v_i}D_{v_i}$. A rational function $\chi^m$ ($m\in M$) is in $H^0(\X, \D)$ if and only if $\text{div}(\chi^m) + \lfloor\D\rfloor \geq 0$, or equivalently, if $\langle m, v_i \rangle \geq -\lfloor a_{v_i}\rfloor$ for all $i \in \N$. Hence, the expression 
\[
x_1^{\langle m,v_1 \rangle + \lfloor a_{v_1}\rfloor} \dotsm x_d^{\langle m,v_d \rangle + \lfloor a_{v_d}\rfloor} \dotsm
\]
has only nonnegative exponents and this defines a monomial of degree $[\D]$ in $\Cox\left(\mathcal{X}_{\Sigma}\right)$. Conversely, let $x^s = \left(x_{v_i}^{s_{v_i}}\right)_{i \in \N}$ be a monomial of degree $[\D]$ in $\Cox\left(\mathcal{X}_{\Sigma}\right)$. There exists an $m \in M$ such that 
\[
\sum_is_{v_i}D_{v_i} = \sum_i a_{v_i}D_{v_i} + \div \left(\chi^m\right).
\] Hence,
\[
\div \left(\chi^m\right) + \lfloor\D\rfloor = \langle m, v_i\rangle + \lfloor a_{v_i} \rfloor = s_{v_i} \geq 0,
\] which implies that $m \in H^0(\X, \D)$. This correspondence is obviously bijective. 
\end{proof}\begin{rem} It is a classical result (see e.g \cite[Theorem~12.5.3]{CLS}) that the Chow ring $\text{A}_{\Q}^{\bullet}\left(\X\right)$ of a toric variety $\X$ is a quotient of the Cox ring. Specifically, we have the following:
\[
\text{A}_{\Q}^{\bullet}\left(\X\right) = \text{Cox}\left(\X\right) / (\mathcal{I} + \mathcal{J}),
\]
where $\mathcal{I}$ is the monomial ideal with square-free generators 
\[
\mathcal{I} \coloneqq \left\langle x_{i_1} \dotsm x_{i_s} \, \big{|}\, i_j \text{ are distinct and }\text{cone}(\tau_{i_1},\dotsc,\tau_{i_s}) \text{ is not a cone of }\Sigma \right\rangle
\]
and  $\mathcal{J}$ is the ideal generated by the linear forms 
\[
\mathcal{J} \coloneqq \left\langle \sum_{\tau \in \Sigma(1)} \langle m, v_{\tau}\rangle x_{\tau} \, \bigg{|}\, m \in M\right\rangle.
\]
The $b$-Cox ring of a complete toric variety is thus the natural object where one hopes to develop a suitable intersection theory of toric $b$-divisors other than just for top degrees. 
\end{rem}

\section{Relationship with Okounkov bodies}\label{section6}
Okounkov bodies (in the literature often called Newton--Okounkov bodies) are convex bodies which one can attach to an algebraic variety together with some extra data, e.g.~a divisor. These convex bodies have been widely and successfully used to explore the geometry of the variety using convex geometrical methods (see \cite{OK1,OK2} and also \cite{KK, KK2, KKa} and \cite{LM} and the references therein). The main goal of this section is to identify the convex sets $\Delta_{\D}$ arising from toric $b$-divisors $\D$ (Definition~ \ref{hjhj}) with convex Okounkov bodies attached to algebras of almost integral type defined by K.~Kaveh and A.~G.~Khovanskii in \cite{KK}. As an application, in the big and nef case, we also construct a global Okounkov body generalizing the global Okounkov body associated to a big divisor constructed in  \cite{LM} to the $b$-setting. 

\subsection*{Identification of $b$-convex bodies and Okounkov bodies}\label{okounkov_algebra}
Throughout this section, $\Sigma \subseteq N_{\R}$ will denote a complete and smooth fan. A \emph{convex body} is a compact, convex set in $\R^n$ with non-empty interior. We start by recalling the definition of the convex Okounkov body associated to an algebra of almost integral type defined in \cite{KK} (where it is called the Newton--Okounkov body). Later on, we will apply this construction to the case of the ring of global sections of multiples of a (not necessarily nef) toric $b$-divisor. This turns out to be an algebra of almost integral type. Then we give an isomorphism of lattices $M \simeq \Z^n$ which identifies the convex set $\Delta_{\D}$ of Definition \ref{hjhj} with the associated Okounkov body.

\subsubsection*{Okounkov body associated to an algebra of almost integral type}
We start with some definitions which are taken from \cite[Section~2.3]{KK}. 
\begin{Def}
Let $F$ be a finitely generated field of transcendence degree $n$ over $k$. For any integer $\ell \geq 0$, a homogeneous element of degree $\ell$ in $F[t]$ is an element $a_{\ell}t^{\ell}$ where $a_{\ell} \in F$. Let $B$ be a $k$-linear subspace of $F[t]$. The collection $B_{\ell}$ of homogeneous elements of degree $\ell$ in $B$ is a $k$-linear subspace of $B$ called the $\ell$-th homogeneous component of $B$. The $k$-linear space $L_{\ell} \subseteq F$ such that $a \in L_{\ell}$ if and only if $at^{\ell} \in B_{\ell}$ is called the $\ell$-th homogeneous subspace of $B$. A $k$-linear subspace $B \subseteq F[t]$ is called \emph{graded} if it is the direct sum of its homogeneous components. A subalgebra $A \subseteq F[t]$ is called \emph{graded} if it is graded as a linear subspace of $F[t]$. 
\end{Def}  
\begin{Def}\label{def:algebras}
We define the following three classes of graded subalgebras of $F[t]$.
\begin{enumerate}
\item To each non-zero finite dimensional linear subspace $L \subseteq F$ over $k$, we associate the graded subalgebra $A_L \subseteq F[t]$ defined as follows: it's zeroth homogeneous component is $k$ and for each $\ell >0$, its $\ell$-th subspace is $L^{\ell}$, the subspace spanned by all products $f_1\cdots f_{\ell}$ with $f_1, \dotsc, f_{\ell} \in L$. That is, $A_L = \bigoplus_{\ell \geq 0}L^{\ell}t^{\ell}$.
\item A graded subalgebra $A \subseteq F[t]$ is called an \emph{algebra of integral type} if there is a graded subalgebra $A_L \subseteq F[t|$ for some non-zero finite dimensional linear subspace $L \subseteq F$ over $k$ such that the subalgebra $A$ is a finitely generated $A_L$-module (equivalently, $A$ is finitely generated over $k$ and is a finite module over the subalgebra generated by $A_1$, the first homogeneous components of $A$).
\item Finally, a graded subalgebra $A \subseteq F[t]$ is called an \emph{algebra of almost integral type} if there is an algebra $A'\subseteq F[t]$ of integral type such that $A \subseteq A'$ (equivalently, $A\subseteq A_L$ for some non-zero finite dimensional linear subspace $L \subseteq F$). 
\end{enumerate}
\end{Def}
\begin{rem}
We can think of the $A_L$'s as being the homogeneous coordinate rings of projective varieties. Then, one can see that rings of global sections of ample line bundles give rise to algebras $A$ of integral type, whereas rings of global sections of arbitrary divisors give rise to algebras of almost integral type (\cite[Theorem~3.7 and 3.8]{KK}). It turns out that also toric $b$-divisors give rise to algebras of the latter type.
\end{rem}
\begin{Def} Let $A$ be an algebra over $k$ and let $G$ be an ordered abelian group, i.e.~an abelian group equipped with a total order \enquote{$<$} with respect to the group operation. A \emph{valuation} on $A$ is a function 
\[
\nu \colon A\setminus \{0\} \longrightarrow G
\]
satisfying the following conditions: \begin{enumerate}
\item For all $f,g \in A$ with $f,g,f+g \neq 0$, we have $\nu(f+g) \geq \min\left(\nu(f), \nu(g)\right)$. 
\item For all $0 \neq f \in A$ and $0 \neq \lambda \in k$, we have $\nu(\lambda f) = \nu(f)$. 
\item For any $f,g \in A$ with $f,g \neq 0$, we have $\nu(fg) = \nu(f) + \nu(g)$. 
\end{enumerate}
A valuation $\nu$ is said to be \emph{faithful} if its image is the whole of $G$. It is said to \emph{have $1$-dimensional leaves} if it satisfies that if whenever $\nu(f) = \nu(g)$ for $f,g \in F\setminus \{0\}$, then 
\[
\nu(g + \lambda f) > \nu(g)
\]
for some $\lambda \in k$. 
\end{Def}
We associate a semigroup $S(A) \subseteq \Z^n \times \Z$ to an algebra $A \subseteq F[t]$ of almost integral type. This is done as follows: first, assume we are given a faithful valuation 
\[
\nu \colon F\setminus \{0\} \longrightarrow \Z^n
\]
with $1$-dimensional leaves (we assume $\Z^n$ to be equipped with a total order \enquote{$<$} which respects addition). 
We now consider the total ordering $\prec$ on the group $\Z^n \times \Z$ given in the following way: let $(\alpha, i)$, $(\beta,j)$ $\in \Z^n \times \Z$.
\begin{enumerate}
\item If $i<j$, then $(\alpha,i) \prec (\beta,j)$. 
\item If $i=j$ and $\alpha < \beta$, then $(\alpha,i) \prec (\beta,j)$. 
\end{enumerate}
\begin{Def} Given a valuation $\nu$ on $F$ as above, we define a valuation
\[
\nu_t \colon F[t]\setminus \{0\} \longrightarrow \Z^n \times \Z
\]
in the following way: let $p(t) = a_{i}t^{i} + \cdots + a_0$, with $a_{i} \neq 0$, be a polynomial in $F[t]$. Then we put 
\[
\nu_t(p) \coloneqq \left(\nu(a_{i}), i\right)
\]
and extend $\nu_t$ to $F(t)\setminus\{0\}$. This is a faithful valuation with $1$-dimensional leaves  extending $\nu$ on $F$ which we also denote by $\nu_t$. 
\end{Def}
We are now ready to define the semigroup $S(A) \subseteq \Z^n \times \Z$ associated to an algebra $A \subseteq F[t]$ of almost integral type.
\begin{Def}\label{semigroup}
Let $A \subseteq F[t]$ be an algebra of almost integral type and assume that we are given a faithful valuation $\nu$ on $F$ with $1$-dimensional leaves. We define the semigroup $S(A)_{\nu}$ by 
\[
S(A)_{\nu} \coloneqq \nu_t(A\setminus \{0\}) \subseteq \Z^n \times \Z.
\] 
\end{Def}
We sometimes write $S(A) = S(A)_{\nu}$ keeping in mind the dependence on the valuation $\nu$. 
This semigroup has some nice properties. 
\begin{Def}
A pair $(S,R)$ where $S$ is a semigroup in $\Z^n$ and $R$ is a rational half-space in 
\[
L(S) \coloneqq \text{real span of } S \subseteq \R^n
\] 
is said to be \emph{admissible} if $S \subseteq R$. An admissible pair is called \emph{strongly admissible} if, moreover, the cone over $S$ defined by 
\begin{align}\label{cone_over_S}
\cone(S) \coloneqq \overline{\convhull\left( S \cup \{0\} \right) } \subseteq \R^n 
\end{align} 
is strictly convex and intersects the boundary $\partial R$ of $R$ only at the origin.
\end{Def}
\begin{Def}
A \emph{non-negative} semigroup of integral points in $\R^{n+1}$ is a semigroup 
\[
S \subseteq \R^n \times \R_{\geq 0}
\]
which is not contained in the hyperplane $x_{n+1} = 0$. To such a semigroup one can associate an admissible pair $(S, R(S))$ by setting 
\[
R(S) \coloneqq L(S) \cap (\R^n\times \R_{\geq 0}).
\] 
A non-negative semigroup $S$ is said to be strongly admissible if its associated admissible pair $(S, R(S))$ has this property.
\end{Def}
The following is \cite[Theorem~2.31]{KK}.
\begin{lemma}
The semigroup $S(A) \subseteq \Z^{n+1}$ of Definition \ref{semigroup} is non-negative and its associated admissible pair $(S(A), R(S(A)))$ is strongly admissible.
\end{lemma}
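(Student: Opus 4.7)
The plan is to reduce the statement to the model case of an algebra of the form $A_L$ (as in Definition \ref{def:algebras}) and then to produce an explicit strictly convex cone containing the semigroup.

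First I would verify the two easy points: (a) $S(A) \subseteq \Z^n \times \Z_{\geq 0}$ holds immediately from the definition of $\nu_t$, since for a polynomial $p = a_i t^i + \dotsb + a_0 \in A \setminus \{0\}$ with $a_i \neq 0$ one has $\nu_t(p) = (\nu(a_i), i)$ with $i \geq 0$; and (b) $S(A)$ is not contained in the hyperplane $\{x_{n+1}=0\}$ provided $A$ has at least one non-zero homogeneous component of positive degree (which we may assume to avoid the trivial case $A = k$). These two points together give that $S(A)$ is a non-negative semigroup of integral points, so $R(S(A)) = L(S(A)) \cap (\R^n\times \R_{\geq 0})$ is well defined and $(S(A), R(S(A)))$ is an admissible pair.

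For strong admissibility, the crucial observation is that the inclusion $A \subseteq A_L$ (coming from the definition of almost integral type) gives $S(A) \subseteq S(A_L)$, so it suffices to prove strong admissibility for $S(A_L)$. Using the $1$-dimensional leaves hypothesis on $\nu$, I would choose a basis $f_1,\dotsc,f_d$ of $L$ with pairwise distinct valuations $v_i \coloneqq \nu(f_i) \in \Z^n$. Any $g \in L^\ell \setminus \{0\}$ is a $k$-linear combination of $\ell$-fold monomials $f_{\alpha_1}\dotsm f_{\alpha_\ell}$ with $\nu(f_{\alpha_1}\dotsm f_{\alpha_\ell}) = v_{\alpha_1}+\dotsb+v_{\alpha_\ell}$. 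Invoking the $1$-dimensional leaves property (which forces the valuation of any linear combination in $L^\ell$ to again coincide with the valuation of some monomial, after iteratively absorbing cancellations), one obtains the key inclusion
\[
\nu(L^\ell\setminus\{0\}) \subseteq \{v_{\alpha_1}+\dotsb+v_{\alpha_\ell} : \alpha \in [d]^\ell\} \subseteq \ell\cdot P, \qquad P \coloneqq \conv\{v_1,\dotsc,v_d\}.
\]

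Granting this, the proof wraps up as follows: by construction $S(A_L) \subseteq \{(x,\ell)\in \Z^n\times\Z_{\geq 0}:x\in \ell\cdot P\}$, and therefore
\[
\cone(S(A))\subseteq \cone(S(A_L)) \subseteq \cone\bigl(P\times\{1\}\bigr).
\]
Since $P$ is a bounded polytope, the right-hand cone is strictly convex (pointed) and meets the hyperplane $\R^n\times\{0\}$ only at the origin. This forces the same properties for $\cone(S(A))$, and hence for its intersection with $L(S(A))$, establishing strong admissibility of the pair $(S(A), R(S(A)))$.

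The main obstacle in this plan is the key inclusion $\nu(L^\ell\setminus\{0\})\subseteq\ell P$: while the inclusion of the monomial valuations into $\ell P$ is transparent, controlling what happens under arbitrary $k$-linear combinations requires a careful application of the $1$-dimensional leaves condition, essentially implementing a Gauss elimination in $L^\ell$ with respect to $\nu$ to show that every achievable valuation is already of monomial type. Everything else in the argument is bookkeeping around the definitions of $\cone$, $R(S)$ and admissibility.
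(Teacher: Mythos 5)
The paper does not prove this lemma at all: it is stated immediately after the sentence ``The following is \cite[Theorem~2.31]{KK}'', i.e.~it is a direct citation of Kaveh--Khovanskii, and the burden of proof is entirely deferred to that reference. So there is nothing in the paper to compare your argument with.

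Your proposal, however, contains a genuine gap, precisely at the step you flagged as the ``main obstacle''. The claimed key inclusion $\nu(L^\ell\setminus\{0\})\subseteq \ell P$ with $P=\conv\{\nu(f_1),\dotsc,\nu(f_d)\}$ is \emph{false}, and the one-dimensional-leaves hypothesis does not repair it. Here is a concrete counterexample. Take $F=k(x,y)$, $n=2$, and let $\nu$ be the lexicographic valuation with $\nu(x^ay^b)=(a,b)$ and total order $(0,1)<(1,0)$; this is faithful with one-dimensional leaves. Take $L=\operatorname{span}_k\{1,\,y,\,x-y^2\}$. The three basis valuations are $\nu(1)=(0,0)$, $\nu(y)=(0,1)$, $\nu(x-y^2)=(0,2)$ (the last because $(0,2)<(1,0)$), so $P$ is the segment from $(0,0)$ to $(0,2)$ on the $y$-axis, and $2P$ lies entirely in that axis. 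But $x=(x-y^2)+y^2$ lies in $L^2$, and $\nu(x)=(1,0)\notin 2P$. The phenomenon you hoped to control by ``Gaussian elimination with respect to $\nu$'' is exactly what occurs here: two degree-$2$ monomials $x-y^2$ and $y^2$ share the valuation $(0,2)$, and after the forced cancellation the surviving valuation $(1,0)$ is \emph{not} a sum of two of the $v_i$; the one-dimensional-leaves property only tells you the new valuation is strictly larger in the total order, not that it stays inside the Minkowski sum of the $v_i$.

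Consequently, the containment $\cone(S(A_L))\subseteq\cone(P\times\{1\})$ on which your proof hinges can fail, and the argument as written does not establish strong admissibility. (In the example above the lemma does hold, but one has to bound $\nu(L^\ell)$ by a \emph{different}, larger polytope growing linearly in $\ell$ --- e.g.\ using that $\bigcup_\ell L^\ell$ is a finitely generated $k$-algebra and bounding degrees --- which is essentially what the actual argument in \cite{KK} does; the convex hull of the basis valuations is simply the wrong polytope.) The reduction from $A$ to $A_L$ and the translation of strong admissibility into the two conditions on $\cone(S(A))$ are fine; it is only the quantitative bound on $\nu(L^\ell)$ that needs a different idea.
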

We are now ready to define the convex Okounkov body associated to an algebra of almost integral type. 
\begin{Def}
Let $A \subseteq F[t]$ be an algebra of almost integral type and let $S(A)$ be the non-negative, strongly admissible semigroup defined above. Consider the strongly convex cone $C$ given by
\[
C = \cone(S(A)) \subseteq \R^n \times \R.
\] 
The \emph{Okounkov body} $\Delta_A$ associated to $A$ is then defined to be the slice of $C$ at height $1$, i.e.
\[
\Delta_A \coloneqq C \cap \left(\R^n \times \{1\}\right).
\]
\end{Def}
\subsubsection*{Okounkov body associated to the ring of global sections of multiples of a toric $b$-divisor}
We now proceed to show that the ring of global sections of multiples of a toric $b$-divisor $\D$ defines an algebra of almost integral type $A_{\D}$ and that its associated Okounkov body $\Delta_{A_{\D}}$ can be identified with the convex set $\Delta_{\D}$ associated to $\D$ from Definition \ref{hjhj}.

Let $F =k(\X)$ be the field of rational functions of the complete, smooth toric variety $\X$. This is the quotient field of $k[M]$, the ring of Laurent polynomials. Hence, elements of $F$ are quotients of elements of the form $\sum_{m\in M}a_m\chi^m$, where, as usual, $\chi^m$ denotes the character of the torus of weight $m \in M$. Recall from the previous section that to a toric $b$-divisor $\D$ one can associate the space of global sections $H^0(\X, \D) \subseteq k[M]$ which is given by 
\[
H^0(\X, \D) = \{f \in F \,|\, b\text{-div}(f) + \lfloor\D \rfloor \geq 0\}.
\]
Moreover, we have a well defined map 
\begin{align}\label{graded}
H^0(\X,\D) \otimes H^0(\X,\pmb{E}) \longrightarrow H^0(\X, \D+\pmb{E})
\end{align}
for any toric $b$-divisors $\D$ and $\pmb{E}$.
\begin{Def}\label{algebra}
We define the set $A_{\D}$ to be the collection of all polynomials $f(t) = \sum_{\ell}f_{\ell}t^{\ell}$ with $f_{\ell} \in H^0(\X, \ell \D)$, i.e.
\[
A_{\D} = \bigoplus_{\ell \geq 0}H^0(\X, \ell \D)t^{\ell}.
\] 
By (\ref{graded}), $A_{\D} \subseteq F[t]$ is a graded subalgebra.
\end{Def}
\begin{rem} Note that the graded subalgebra $A_{\D}$ is like the graded algebra $b\text{-}R(\D)$ from Definition \ref{b-gradedring}, with the difference that in $A_{\D}$ we are taking track of the grading with the variable~$t$. We choose to use this notation in order to be compatible with the notation in \cite{KK}.
\end{rem}
\begin{prop}\label{prop:almostintegral}
Let $\D$ be a toric $b$-divisor on $\X$. Then the graded subalgebra $A_{\D} \subseteq F[t]$ is an algebra of almost integral type.
\end{prop}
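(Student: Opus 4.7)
The plan is to exhibit an explicit finite dimensional $k$-subspace $L \subseteq F = k(\X)$ such that $A_{\D} \subseteq A_L$; by Definition~\ref{def:algebras}, this is exactly what is needed to conclude that $A_{\D}$ is of almost integral type.

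First I would combine Proposition~\ref{globalsectionstoric} with the observation that the conical function associated to $\ell\D$ is $\ell\tilde{\phi}$; from Definition~\ref{hjhj} this immediately gives $\Delta_{\ell\D} = \ell\Delta_{\D}$, and hence
\[
H^0(\X, \ell\D) \;=\; \bigoplus_{m \in \ell\Delta_{\D} \cap M} k\cdot\chi^m
\]
for every $\ell \geq 0$. Thus the $\ell$-th homogeneous component of $A_{\D}$ is described purely in terms of the lattice points of $\ell\Delta_{\D}$.

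Next, using that $\Delta_{\D}$ is bounded in $M_{\R}$, I would fix an identification $M \simeq \Z^n$ and a positive integer $C$ with $\Delta_{\D} \subseteq Q \coloneqq [-C,C]^n$. Setting
\[
L \coloneqq \bigoplus_{m \in Q \cap M} k\cdot\chi^m \;\subseteq\; F,
\]
we obtain a finite dimensional $k$-subspace of $F$. The algebra $A_L = \bigoplus_{\ell \geq 0} L^{\ell} t^{\ell}$ is then of integral type by construction, and $L^{\ell}$ equals the $k$-span of $\bigl\{\chi^{m_1+\cdots+m_{\ell}} : m_1,\dotsc,m_{\ell} \in Q \cap M\bigr\}$.

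The last step is to verify the containment $H^0(\X,\ell\D) \subseteq L^{\ell}$ for every $\ell \geq 0$. Given $m \in \ell\Delta_{\D} \cap M$ one has $m \in \ell Q \cap M$, so each coordinate of $m$ is an integer in $[-\ell C, \ell C]$. Splitting each coordinate as a sum of $\ell$ integers in $[-C,C]$ and reassembling coordinatewise yields $m = m_1 + \cdots + m_{\ell}$ with $m_i \in Q \cap M$, whence $\chi^m = \chi^{m_1}\dotsm\chi^{m_{\ell}} \in L^{\ell}$. This gives $A_{\D} \subseteq A_L$ and finishes the argument. The only mildly non-routine input is the \emph{normality} of the lattice cube $Q$, which however reduces to the elementary splitting of integers just described; apart from this, the proof is a direct translation of the monomial description of global sections into the language of Minkowski sums.
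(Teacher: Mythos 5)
Your proof is correct, and it takes a genuinely more elementary route than the paper's. The paper notes that $A_{\D}\subseteq A_{D_{\Sigma'}}$ for any single incarnation $D_{\Sigma'}$, and then invokes the (cited) fact that the section ring of a toric $\Q$-divisor on a smooth complete toric variety is finitely generated, hence of integral type; almost integral type for $A_{\D}$ then follows from the inclusion. You instead construct the dominating algebra $A_L$ directly by hand: since $\Delta_{\D}$ is a bounded convex set (as asserted in Definition~\ref{hjhj}, being contained in the polytope $P_{D_\Sigma}$), you enclose it in a lattice cube $Q$, take $L$ to be the span of the characters indexed by $Q\cap M$, and check $H^0(\X,\ell\D)\subseteq L^{\ell}$ via the elementary coordinatewise integer-splitting argument (i.e.\ normality of cubes), using $\Delta_{\ell\D}=\ell\Delta_{\D}$ and Proposition~\ref{globalsectionstoric}. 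The two proofs buy different things: the paper's is shorter and leans on an external citation, while yours is self-contained, avoids any appeal to finite generation of section rings, and makes the finite-dimensional subspace $L$ completely explicit. Both are valid.
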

\begin{proof}
For any fan $\Sigma' \in R(\Sigma)$ we have the inclusion of algebras
\[
A_{\D} \subseteq A_{D_{\Sigma'}}.
\]
 Indeed, $f \in H^0(\X, \D)$ if and only if $f \in H^0(\X, D_{\Sigma'})$ for all fans $\Sigma' \in R(\Sigma)$. The claim now follows from the fact that $A_{D_{\Sigma'}} \subseteq F[t]$ is finitely generated and hence an algebra of integral type (see \cite{E}). 
\end{proof}
We want to describe the Okounov body $\Delta_{A_{\D}}$ associated to the algebra of almost integral type $A_{\D}$. Recall from the previous section that the convex body $\Delta_{A_{\D}}$ depends on the choice of a faithful valuation $\nu \colon F\setminus \{0\} \to \Z^n$ with $1$-dimensional leaves. Our goal is to give an identification of lattices $\phi \colon M \to \Z^n$ and a faithful valuation $\nu \colon F\setminus \{0\} \to \Z^n$ with $1$-dimensional leaves such that, if we denote by $\phi_{\R} \colon M_{\R} \to \R^n$ the induced isomorphism of real vector spaces, then we get an identification
\[
\Delta_{A_{\D}} = \phi_{\R}(\Delta_{\D})
\]
of convex bodies.  

In order to do this, we follow a construction presented in \cite[Section~6.1]{LM}. First, let us start with a complete flag 
\[
Y_{\bullet} = Y_0 \supset Y_1 \supset \dotsb \supset Y_n = \{\text{pt}\} 
\]
of $\T$-invariant subvarieties of $\X$. We have $\text{codim}_{\X}(Y_i) = i$ for $i = 0, \dotsc, n$. Since we are assuming $\X$ to be smooth, we can order the prime $\T$-invariant divisors $D_1, \dotsc , D_r$ of $\X$, where $r = \# \Sigma(1)$, in such a way that we have $Y_i = D_1 \cap \dotsb \cap D_i$ for $i \leq n \leq r$. 

For $i = 1, \dotsc, r$, we denote by $v_i$ the primitive generator of the ray corresponding to $D_i$. Then the set of vectors $\{v_1, \dotsc, v_n\}$ forms a basis of $N$ and generates a cone $\sigma$ in $\Sigma$ of maximal dimension $n$. We get an isomorphism 
\begin{align}\label{ident}
\phi \colon M \simeq \Z^n, 
\end{align}
given by 
\[
u \longmapsto (\langle u, v_i \rangle )_{1 \leq i \leq n}.
\] 
We denote by $\phi_{\R} \colon M_{\R} \simeq \R^n$ the induced isomorphism of real vector spaces. 
\begin{Def}\label{valuation}
Let $F$ and $Y_{\bullet}$ be as above. We define the valuation $\nu = \nu_{Y_{\bullet}} \colon F \setminus \{0\} \to \Z^n$ in the following way: let $s \in k[M]$ and let $\div(s) = \sum_{i=1}^ra_iD_i$ be the zero locus of $s$. Then $\nu(s)$ is defined to be the tuple $(a_1, \dotsc, a_n)$.  Finally, we extend by linearity to $F \setminus \{0\}$.
\end{Def}
One can check that 
\[
\# \Im\left(\nu \colon H^0(\X, \ell \D) \setminus \{0\} \longrightarrow \Z^n\right) = h^0(\X, \ell \D)
\]
for any integer multiple $\ell \geq 0$. Moreover, by \cite{KK}, the valuation $\nu$ is faithful and has $1$-dimensional leaves. We can now state the main theorem of this section. 
\begin{theorem}\label{equal_conv}
Let notations be as above and consider a toric $b$-divisor $\D = (D_{\Sigma'})_{\Sigma' \in R(\Sigma)}$ such that ${D_{\Sigma}}|_{U_{\sigma}}$ is trivial. Let $A_{\D} \subseteq F[t]$ be the algebra of almost integral type associated to $\D$ from Definition \ref{algebra}. Let $\nu \colon F \setminus \{0\} \to \Z^n$ be the valuation of Definition \ref{valuation} and let $\phi_{\R}$ be the isomorphism (\ref{ident}). Then we can identify the convex sets $\Delta_{A_{\D}} = \Delta_{A_{\D}}^{\nu}$ and $\Delta_{\D}$, i.e.~we have that
\[
\Delta_{A_{\D}} = \phi_{\R}(\Delta_{\D}).
\]
\end{theorem}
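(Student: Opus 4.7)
The strategy is to compute the semigroup $S(A_{\D})_\nu \subseteq \Z^n \times \Z_{\geq 0}$ explicitly and then identify the slice of the associated cone at height $1$ with $\phi_{\R}(\Delta_{\D})$. First, on a character $\chi^m$ for $m \in M$, Definition~\ref{valuation} gives $\nu(\chi^m) = (\langle m,v_1\rangle, \ldots, \langle m, v_n\rangle) = \phi(m)$, since $\div(\chi^m) = \sum_{\tau \in \Sigma(1)}\langle m, v_\tau\rangle D_\tau$ and the flag $Y_\bullet$ is built from $D_1, \ldots, D_n$. Because $\{v_1, \ldots, v_n\}$ is a $\Z$-basis of $N$, the map $\phi$ is injective and distinct characters have distinct $\nu$-values; hence by the strong triangle inequality for valuations, $\nu\!\left(\sum_m c_m\chi^m\right) = \min_\prec\{\phi(m) : c_m \neq 0\}$ for any non-zero Laurent polynomial. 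Combining this with Proposition~\ref{globalsectionstoric} and the scaling identity $\Delta_{\ell\D} = \ell\Delta_{\D}$ (which follows at once from the fact that the conical function of $\ell\D$ is $\ell\tilde{\phi}$), the image of $\nu$ on $H^0(\X, \ell\D)\setminus\{0\}$ is exactly $\phi(\ell\Delta_{\D} \cap M)$, so
\[
S(A_{\D})_\nu = \{(\phi(m), \ell) \in \Z^n \times \Z_{\geq 0} : m \in \ell\Delta_{\D} \cap M\}.
\]

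Next I would identify $\Delta_{A_{\D}}$, the slice at height $1$ of $\cone(S(A_{\D})_\nu)$. Any point in the slice is a limit of convex combinations $\sum_i \mu_i\,\phi(m_i)/\ell_i = \phi_{\R}\!\left(\sum_i \mu_i (m_i/\ell_i)\right)$ with $\mu_i \geq 0$, $\sum_i \mu_i = 1$, $\ell_i > 0$, and $m_i \in \ell_i\Delta_{\D}$; by convexity and closedness of $\Delta_{\D}$, the limit lies in $\phi_{\R}(\Delta_{\D})$, giving $\Delta_{A_{\D}} \subseteq \phi_{\R}(\Delta_{\D})$. For the converse, every rational point $y = m/\ell$ with $m \in \ell\Delta_{\D} \cap M$ yields $(\phi_{\R}(y), 1) \in \Delta_{A_{\D}}$, so taking closures the slice contains $\phi_{\R}\!\left(\overline{\convhull(\Delta_{\D} \cap M_{\Q})}\right)$. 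Equality with $\phi_{\R}(\Delta_{\D})$ thus reduces to the density of $\Delta_{\D} \cap M_{\Q}$ in $\Delta_{\D}$.

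The main obstacle is precisely this density statement, since $\Delta_{\D}$ is cut out by a potentially infinite family of rational half-spaces $\langle \cdot, v\rangle \geq -\tilde{\phi}(v)$ for $v \in N^{\prim}$, rather than by finitely many. The hypothesis that ${D_\Sigma}|_{U_\sigma}$ is trivial forces $\tilde{\phi}(v_1) = \cdots = \tilde{\phi}(v_n) = 0$, hence $0 \in \Delta_{\D}$ and the body is non-empty with a canonical rational basepoint. In the case where $\Delta_{\D}$ has non-empty interior, density follows from a standard interpolation argument: for $y \in \Delta_{\D}$ and $y_0$ in the interior, the segment $y_t = (1-t)y + t y_0$ lies in the interior for $t > 0$, and each $y_t$ is approximable by points of $M_{\Q}$ that remain in $\Delta_{\D}$; letting $t \to 0$ yields $y \in \overline{\Delta_{\D} \cap M_{\Q}}$. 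The lower-dimensional case reduces to the full-dimensional one after restricting to the affine span of $\Delta_{\D}$, which is itself defined by rational equations and therefore has dense rational points; the already-established inclusion $\Delta_{A_{\D}} \subseteq \phi_{\R}(\Delta_{\D})$ ensures both bodies live in the same affine subspace. Combining the two inclusions then concludes the proof.
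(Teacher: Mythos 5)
Your route is genuinely different from, and in one respect more careful than, the paper's. You compute the semigroup $S(A_{\D})_\nu$ explicitly and then compare the height-one slice of its cone with $\phi_{\R}(\Delta_{\D})$, reducing matters to the density of $\Delta_{\D}\cap M_{\Q}$ in $\Delta_{\D}$. The paper instead introduces two cones $C$ and $C'$, shows they have the same lattice points, and then asserts without proof that ``$C'$ is the convex hull of its lattice points'' --- which is precisely the density statement you single out. So you have correctly identified the genuine content of the proof. Your density argument in the full-dimensional case (interpolation toward an interior point) is correct; note, though, that you do not actually need $0\in\Delta_{\D}$ --- any rational interior point suffices, and those exist because the interior is open and nonempty. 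Incidentally, the inference ``$\tilde\phi(v_1)=\dots=\tilde\phi(v_n)=0$, hence $0\in\Delta_{\D}$'' is false: $0\in\Delta_{\D}$ constrains $\tilde\phi$ at \emph{all} primitive $v$, not just at $v_1,\dots,v_n$.

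The real gap in your proposal is the lower-dimensional reduction. It is not true that the affine span of $\Delta_{\D}$ is cut out by rational equations. Although $\Delta_{\D}$ is an intersection of rational half-spaces $\{\langle\cdot,v\rangle\geq -\tilde\phi(v)\}$, an infinite intersection of such half-spaces can collapse $\Delta_{\D}$ onto an irrational affine subspace without any single defining inequality becoming an equality. Concretely, in $\R^2$ one can take $\tilde\phi(-a,b)=0$ for all coprime $a,b>0$ with $a/b<\sqrt2$ and $\tilde\phi(a,-b)=0$ for $a/b>\sqrt2$, together with $\tilde\phi(-1,0)=1$ and $\tilde\phi(1,0)=\tilde\phi(0,1)=0$ (so the triviality hypothesis holds). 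Then $\Delta_{\D}$ is the segment from $(0,0)$ to $(1,\sqrt2)$, whose affine span $y=\sqrt2\,x$ is irrational. Here $\Delta_{\D}\cap M_{\Q}=\{0\}$, $H^0(\X,\ell\D)=k$ for all $\ell$, $A_{\D}=k[t]$, and $\Delta_{A_{\D}}=\{0\}\neq\phi_{\R}(\Delta_{\D})$: the theorem as literally stated fails in the degenerate case. The same objection applies to the paper's unproved assertion about $C'$. The correct conclusion is that the theorem implicitly requires $\Delta_{\D}$ to be full-dimensional (equivalently, $\D$ big), in which case your density argument closes the gap that the paper leaves open; in the non-full-dimensional case neither your proposal nor the paper's proof (nor the stated theorem) is correct as written.
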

\begin{proof}
Recall from the previous section that the semigroup $S(A_{\D})$ is defined to be the image $\nu_t(A_{\D} \setminus \{0\}) \subseteq \Z^n \times \Z$. Also, recall that the strictly convex cone $C\subseteq \R^n \times \R$ is given by
\[
C = \cone(S(A_{\D})) = \overline{\convhull\left(S(A_{\D}) \cup \{0\}\right)}.
\]
Consider the convex body $\phi_{\R}\left(\Delta_{\D}\right) \subseteq \R^n \times \R$ lying in the slice $\R^n \times \{1\}$. We define the cone $C' \subseteq \R^n \times \R$ by  
\[
C' \coloneqq \text{strictly convex cone in }\R^n \times \R \text{ over the convex set } \phi_{\R}(\Delta_{\D}).
\]
Note that we have $ \phi_{\R}(\Delta_{\ell \D}) = \phi_{\R}(\ell \Delta_{\D}) = \ell \phi_{\R}(\Delta_{\D})$. Hence, $C'$ is characterized by 
\[
C'\cap \left(\R^n \times \{\ell\}\right) = \phi_{\R}(\Delta_{\ell \D}).
\]
In order to prove the theorem it suffices to show that $C = C'$. Moreover, since $C \text{ (resp.~}C')$ is the convex hull of its lattice points, it suffices to show that $C$ and $C'$ contain the same lattice points. Indeed, let $(\phi(m), \ell) \in C' \cap \left(\Z^n \times \Z\right)$ so that $m \in \Delta_{\ell\D}$. The zero locus of the corresponding section $\chi^m \in H^0(\X, \ell \D)$ is $\ell \D + \sum_{i=1}^r \langle m, v_i\rangle D_i$. By assumption, we have that the coefficient of $D_i$ in $\D$ is $0$ for $i = 1, \dotsc, n$, since $D_{\Sigma}|_{U_{\sigma}}$ is trivial. Hence, $\nu(\chi^m) = \phi(m)$ and thus $(\phi(m), \ell) \in C \cap \left(\Z^n \times \Z\right)$. Now, since $\phi$ is injective and we have precisely $h^0(\X, \ell \D)$ lattice points in $\Delta_{\ell\D}$, we have that for all $\ell \geq 0$,
\[
C \cap \left(\Z^n \times \{\ell\}\right) = \Im\left(v \colon H^0(\X, \ell \D) \setminus \{0\} \longrightarrow \Z^n\right) = \phi(\Delta_{\ell\D} \cap M) = C' \cap \left(\Z^n \times \{\ell\}\right), 
\]
and hence 
\[
C \cap \left(\Z^n \times \Z \right) = C' \cap \left(\Z^n \times \Z \right),
\]
as we wanted to show. 
 
\end{proof} 
\begin{rem}
The assumption in the above theorem that ${D_{\Sigma}}|_{U_{\sigma}}$ is trivial can always be achieved by passing to a linearly equivalent divisor $D_{\Sigma} + \div\left(\chi^{m}\right)$. The corresponding polytope $P_{D_{\Sigma}}$ is then translated accordingly to $P_{D_{\Sigma}} -m$.
\end{rem}
\subsection*{Applications}
Throughout this section, $\Sigma \subseteq N_{\R}$ will denote a complete and smooth fan of dimension $n$, i.e.~such that $\dim (\X) = n$. Furthermore, $\D$ will denote a toric $b$-divisor on $\X$ and $A_{\D} \subseteq F[t]$ its associated algebra of almost integral type. We now give some applications of the identification of the convex sets $\Delta_{\D}$ with Okounkov bodies. First, we make a statement regarding the growth of the Hilbert function of the graded subalgebra $A_{\D}$. Then, in the big and nef case, we construct a \emph{global Okounkov $b$-body} generalizing the global Okounkov body constructed in \cite{LM}. This is an interesting tool which one can use in order to deeper study the birational geometry of toric varieties.
   
\subsubsection*{Asymptotic growth of Hilbert functions of algebras of almost integral type}
Let $H_{A_{\D}}$ be the Hilbert function of the algebra $A_{\D}$. Before describing the asymptotic behavior of $H_{A_{\D}}$, we make the following remark. 
\begin{rem} The Hilbert function $H_{S} \colon \N \to \N,$ of a semigroup $S \subseteq \Z^n \times \Z$ is given by the assignment 
\[
 t \longmapsto \#\cone(S) \cap \left(\Z^n \times \{t\}\right).
\]
By  \cite[Proposition~2.27]{KK} the function $H_{A_{\D}}$ coincides with $H_{S(A_{\D})}$. Moreover, recall the Ehrhart function $f_{\D}=f_{\Delta_{\D}}\colon \N \to \N$ of the convex set $\Delta_{\D}$ (Definition \ref{ehrhartfunction}). Then the proof of Theorem \ref{equal_conv} implies that 
\[
f_{\D} = H_{S(A_{\D})} = H_{A_{\D}}.
\]
\end{rem}
We have the following proposition. 
\begin{prop}\label{fgf}
Let $\D$ be a toric $b$-divisor satisfying 
\[
\vol\left(\Delta_{\D}\right) > 0.
\]
Then the Hilbert function $H_{A_{\D}}(t)$ of the graded subalgebra $A_{\D}$ grows like $a_nt^n$, where the $n$-th growth coefficient $a_n$ is equal to the volume $\vol\left(\Delta_{\D}\right)$. 
\end{prop}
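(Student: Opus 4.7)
The plan is to reduce the statement to a classical lattice-point counting result for convex bodies, by using the identification already provided in the preceding remark. First I would invoke the stated equality $H_{A_{\D}}(\ell) = f_{\D}(\ell) = \#(\ell \Delta_{\D} \cap M)$ to translate the question about the Hilbert function into a question about the Ehrhart function of the bounded convex set $\Delta_{\D} \subseteq M_{\R}$. After this translation the whole task is to show
\[
\lim_{\ell \to \infty}\frac{\#(\ell \Delta_{\D}\cap M)}{\ell^n} \;=\; \vol(\Delta_{\D}).
\]

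Next I would argue that this limit (the \emph{lattice volume} $\vol_L(\Delta_{\D})$, in the terminology introduced in the proof of Theorem \ref{hilbert-samuel}) equals the usual Euclidean volume. For a rational polytope this equivalence is classical and was already used in Theorem \ref{hilbert-samuel}. For a general bounded convex body this follows from a standard sandwich argument: one approximates $\Delta_{\D}$ from the inside and from the outside by rational polytopes $P^-_\varepsilon \subseteq \Delta_{\D} \subseteq P^+_\varepsilon$ whose volumes differ from $\vol(\Delta_{\D})$ by at most $\varepsilon$, applies the classical Ehrhart asymptotics $\#(\ell P^{\pm}_\varepsilon \cap M)/\ell^n \to \vol(P^{\pm}_\varepsilon)$, and lets $\varepsilon \to 0$. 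This is precisely the kind of limit argument already carried out in the chain of equalities in the proof of Theorem \ref{hilbert-samuel}, where we approximated $\Delta_{\phi}$ by the polytopes $P_{D_{\Sigma'}}$.

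The role of the hypothesis $\vol(\Delta_{\D}) > 0$ is to guarantee that the leading coefficient is genuinely of order $t^n$ rather than of some lower order. Since a bounded convex set of positive Lebesgue measure automatically has non-empty interior in $M_{\R}$, this hypothesis rules out degenerate (lower-dimensional) cases where the sandwich estimate would only provide an upper bound of order $t^{n-1}$.

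I do not expect any serious obstacle here: once the identification $H_{A_{\D}} = f_{\D}$ is in hand and the volume-approximation machinery of Theorem \ref{hilbert-samuel} is reused, the statement is immediate. The only care point is the sandwich by rational polytopes for a possibly non-polytopal $\Delta_{\D}$ (which can happen when $\D$ is not Cartier, by Proposition \ref{fingen}), but this is handled by the continuity of $\vol$ on bounded convex bodies with respect to Hausdorff distance.
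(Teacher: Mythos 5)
Your proof is correct, but the route is noticeably more elementary than the paper's. The paper's proof is a one-liner: it invokes Corollary~3.11 of Kaveh--Khovanskii, a general asymptotic result for Hilbert functions of graded algebras of almost integral type in terms of the volume of the associated Okounkov body, together with Theorem~\ref{equal_conv}, which identifies $\Delta_{A_{\D}}$ with $\phi_{\R}(\Delta_{\D})$. You instead use the preceding remark (which itself combines Theorem~\ref{equal_conv} with KK's Proposition~2.27) to obtain $H_{A_{\D}} = f_{\D}$, and then re-derive the asymptotic $\#(\ell\Delta_{\D}\cap M)/\ell^n \to \vol(\Delta_{\D})$ directly by sandwiching $\Delta_{\D}$ between inner and outer rational polytopes and using Ehrhart theory plus continuity of volume. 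What this buys you is a self-contained lattice-point argument tailored to the toric situation, without the black-box appeal to the KK machinery for general semigroups and Okounkov bodies. Two points worth flagging, both of which you in fact already handle: the limit argument in Theorem~\ref{hilbert-samuel} only approximates $\Delta_{\phi}$ from outside by the decreasing polytopes $P_{D_{\Sigma'}}$, which is available there because $\D$ is nef; for a general (not necessarily nef) $\D$ you genuinely need the two-sided sandwich, since there is no canonical nested family. And your observation that the hypothesis $\vol(\Delta_{\D})>0$ forces $\Delta_{\D}$ to be full-dimensional, so the growth really is of order $t^n$, is exactly the role of that hypothesis.
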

\begin{proof} This follows from \cite[Corollary~3.11]{KK} and Theorem \ref{equal_conv}.
\end{proof}

\subsubsection*{The global convex $b$-cone}
We construct a \emph{global} Okounkov $b$-body in the big and nef case, generalizing the global Okounkov body associated to a projective variety together with a big divisor constructed in \cite{LM}.  

For any smooth and complete variety $X$ of dimension $n$, we denote by $N^1(X) \subseteq H^2(X, \R)$ the N\'eron--Severi group of numerical equivalence classes of divisors on $X$. This is a free abelian group of finite rank. The corresponding finite dimensional $\Q$- and $\R$-vector spaces are denoted by $N^1(X)_{\Q}$ and $N^1(X)_{\R}$, respectively. 

For a pair $(X,D)$ consisting of a smooth $n$-dimensional projective variety together with a big divisor $D\subseteq X$, the authors in \cite{LM} construct a convex Okounkov body $\Delta^{Y_{\bullet}}_D \subseteq \R^n$. The construction depends on the choice of a complete flag $Y_{\bullet}: X=Y_0 \supset Y_1 \supset \cdots \supset Y_n = \{pt\}$ in $X$. Here it is also shown that $\Delta^{Y_{\bullet}}_D$ is independent of the choice of the numerical class $[D]$ in $N^1(X)$ (see \cite[Proposition~4.1]{LM}). The construction of the Okounkov body $\Delta^{Y_{\bullet}}_D$ can be seen as a special case of the construction described in the previous section. Indeed, given a complete flag $Y_{\bullet}$, one can define a valuation $\nu_{Y_{\bullet}}$ induced by this flag which is similar to the valuation given in Definition \ref{valuation} (see \cite[Section~1.1]{LM}). Then the convex bodies $\Delta^{Y_{\bullet}}_D$ and $\Delta_{A_D}$ coincide (see \cite[Theorem~3.9]{KK}).  Moreover, in \cite{LM} the authors show that these convex bodies fit together in a nice way while $[D]$ runs through the set of numerical equivalence classes. The following is \cite[Theorem~4.5]{LM}. 
\begin{theorem}\label{global_body}
Let $X$ be a smooth, projective variety of dimension $n$. There exists a closed convex cone 
\[
\Delta(X) \subseteq \R^n \times N^1(X)_{\R}
\]
characterized by the property that in the diagram 
\begin{center}
    \begin{tikzpicture}
      \matrix[dmatrix] (m)
      {
        \Delta(X) &  & \R^n \times N^1(X)_{\R}\\
        & N^1(X)_{\R} & \\
      };
      \draw[->] (m-1-1) to (m-2-2);
      \draw[right hook->] (m-1-1) to (m-1-3);
      \draw[->] (m-1-3) to node[below right]{$\pr_2$} (m-2-2);
     \end{tikzpicture}
   \end{center}
   the fiber of $\Delta(X)\to N^1(X)_{\R}$ over any big class $[D] \in N^1(X)_{\Q}$ is $\Delta_{A_D}$, i.e.~we have that 
   \[
   \pr_2^{-1}([D])\cap\Delta(X) = \Delta_{A_D} \subseteq \R^n \times \{[D]\} = \R^n.
   \]
   The convex cone $\Delta(X)$ is referred to as the \emph{global Okounkov body} of $X$. 
\end{theorem}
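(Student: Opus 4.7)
The plan is to build $\Delta(X)$ as a closed convex cone sitting over the whole N\'eron--Severi space, obtained from a single large semigroup that parametrizes all sections of all divisors at once. First I would choose divisors $D_1,\dotsc,D_\rho$ whose classes form a $\Z$-basis of $N^1(X)$ (up to torsion, which is harmless here since we work with rational classes), so that $\R^n \times N^1(X)_\R$ is canonically identified with $\R^n \times \R^\rho$ via the coordinates $(a_1,\dotsc,a_\rho) \mapsto [\sum a_i D_i]$. This choice fixes once and for all the ambient space in which the global cone will live.

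Next I would assemble the multigraded section ring
\[
R(X; D_1,\dotsc, D_\rho) \coloneqq \bigoplus_{a \in \Z^\rho_{\geq 0}} H^0\!\left(X, \textstyle\sum_i a_i D_i\right),
\]
apply the valuation $\nu_{Y_\bullet}\colon k(X)^{\times} \to \Z^n$ from Definition \ref{valuation} to nonzero sections, and form the semigroup
\[
\Gamma(X) \coloneqq \bigl\{(\nu_{Y_\bullet}(s), a) \,\big{|}\, 0 \neq s \in H^0(X, \textstyle\sum_i a_i D_i),\, a \in \Z^\rho_{\geq 0}\bigr\} \subseteq \Z^n \times \Z^\rho.
\]
Then I would define $\Delta(X)$ to be the closed convex cone in $\R^n \times \R^\rho$ generated by $\Gamma(X)$. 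This is patently convex and closed, and independent of the order of the $D_i$. I would then extend to arbitrary integral (and then rational) classes $[D]\in N^1(X)_\Q$ by the standard trick of replacing $D$ by $D+m\sum D_i$ for $m$ sufficiently large, so that all coordinates become non-negative, and afterwards translating back.

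The key step is to check the fiber property: for a big class $[D]\in N^1(X)_\Q$ one must show $\pr_2^{-1}([D])\cap \Delta(X) = \Delta_{A_D}$. Fibers of a closed convex cone over a rational point are controlled by the lattice points lying over positive integer multiples of that point, so I would use the identification (already recorded in the excerpt from Kaveh--Khovanskii) between the Okounkov body $\Delta_{A_D}$ and the slice at height one of $\cone(S(A_D))$. Writing $[\ell D] = [\sum_i a_i^{(\ell)} D_i]$ with $a_i^{(\ell)} \in \Z$, one sees that the lattice points of $\Gamma(X)$ in the slice $\Z^n \times \{a^{(\ell)}\}$ are precisely $\nu_{Y_\bullet}\!\left(H^0(X, \ell D)\setminus\{0\}\right)$, using numerical invariance of $H^0$ modulo linear equivalence and the valuation's transformation under translation by $\div(\chi^m)$. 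Taking the closed convex hull and intersecting with the fiber then recovers $\Delta_{A_D}$.

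The main obstacle I expect is the numerical (rather than linear) invariance of the fibers. A priori $\Gamma(X)$ sees divisors, not their numerical classes, so different choices of representative of $[D]$ could give genuinely different semigroups; one has to invoke Fujita-type approximation or the asymptotic base-point-free machinery on big classes (equivalently, Lazarsfeld--Mustata's result that $\Delta^{Y_\bullet}_D$ depends only on $[D]$) to collapse this ambiguity after passing to the closed convex cone. Concretely, I would verify that if $D \equiv D'$ numerically then $\vol$-preserving maps between the relevant section spaces induce equal closures of the $\nu_{Y_\bullet}$-images in the limit, which is where the bigness hypothesis is essential. Once this is in place, the characterizing property in the diagram follows, and uniqueness of $\Delta(X)$ among closed convex cones with this fiber property is automatic since the rational big classes form a dense open subset of the big cone inside $N^1(X)_\R$.
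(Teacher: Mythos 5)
Note first that this theorem is not proved in the paper at all: it is simply quoted as \cite[Theorem~4.5]{LM} (the sentence preceding it reads ``The following is \cite[Theorem~4.5]{LM}''). So there is no in-paper argument to compare against; what you have reconstructed is essentially the Lazarsfeld--Musta\c{t}\u{a} strategy (multigraded section semigroup, closed convex cone over it, Okounkov bodies as slices), which is the right reference and the right general shape of the argument.

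That said, there is a genuine gap in your construction. By taking only a $\Z$-basis $D_1,\dotsc,D_\rho$ of $N^1(X)$ and restricting the grading to $\Z^{\rho}_{\geq 0}$, the semigroup $\Gamma(X)$ and hence the cone you generate from it live over the positive orthant $\R^{\rho}_{\geq 0}$, \emph{not} over all of $N^1(X)_\R$. A general big class need not lie in that orthant, so the cone you have defined has empty fiber there. The ``replace $D$ by $D + m\sum D_i$ and translate back'' step does not repair this: it changes the point in $N^1(X)_\R$ over which you are taking the fiber, and the corresponding operation on Okounkov bodies is \emph{not} a translation --- one only has the super-additivity $\Delta_{A_{D+E}} \supseteq \Delta_{A_D} + \Delta_{A_E}$, with strict inclusion in general, so there is no way to ``translate back.'' To actually get a cone over all of $N^1(X)_\R$ you need to enlarge the generating set to one whose classes generate $N^1(X)$ as a \emph{semigroup} (for instance $D_1,\dotsc,D_\rho,-D_1,\dotsc,-D_\rho$), form the cone over $\R^{2\rho}_{\geq 0}$, and push down along the surjection $\R^{2\rho}\twoheadrightarrow N^1(X)_\R$; numerical invariance is then what makes this pushdown well defined on fibers.

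There is a second subtlety you gesture at but don't pin down. Passing to the \emph{closed} convex cone can in principle enlarge the fiber over a rational point: $\overline{\conv(\Gamma)}\cap\pr_2^{-1}([D])$ may be strictly bigger than the closed convex hull of $\Gamma\cap\pr_2^{-1}(\Z_{>0}[D])$. The content of \cite[Theorem~4.5]{LM} is precisely that for \emph{big} rational classes this does not happen, and that requires the continuity/openness machinery of LM on the interior of the pseudo-effective cone, not just the semigroup bookkeeping. Your appeal to numerical invariance (\cite[Proposition~4.1]{LM}) is legitimate and non-circular --- it is a separate input to \cite[Theorem~4.5]{LM} --- but the continuity-of-fibers point is a distinct step that still needs to be supplied.
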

We have the following remark describing the image of $\Delta(X)$ in $N^1(X)_{\R}$.
\begin{rem}\label{big_cone} The image of $\Delta(X)$ in $N^1(X)_{\R}$ is the pseudo-effective cone $\overline{\text{Eff}}(X)$, i.e.~the closure of the cone spanned by all numerical classes of effective divisors, whose interior is the big cone $\BBig(X)$ (see the proof of \cite[Theorem~4.5]{LM}).
\end{rem}
We now describe the global Okounkov body $\Delta(\X)$ of the smooth and complete toric variety $\X$. For this, we follow mainly \cite[Section~6.1]{LM}. One can show that starting with a choice of a complete flag $Y_{\bullet}$ consisting of torus invariant subvarieties and the choice of a basis of $M \simeq \Z^n$ inducing the isomorphism $\phi_{\R} \colon M_{\R} \simeq \R^n$ from (\ref{ident}) and assuming that $D|_{U_{\sigma}}$ is trivial, one has that 
\[
\Delta^{Y_{\bullet}}_D = \phi_{\R}(P_D)
\]
for any big toric divisor $D \subseteq \X$. Here, as usual, $P_D$ denotes the polytope associated to the toric divisor $D$ (see Section \ref{notations}). Furthermore, we have the short exact sequence 
\[
0 \longrightarrow M \longrightarrow \Z^{r}\overset{q}{\longrightarrow} \text{Cl}(\X) \longrightarrow 0,
\]
where $r = \#\Sigma(1)$. Also, in this special toric case, linear equivalence agrees with numerical equivalence, i.e.~we have that $N^1(\X) = \text{Cl}(\X) = \text{Pic}(\X)$ has no torsion. The choice of a basis for $M$ gives us the dual basis for $N = M^{\vee}$. This in turn gives us a splitting of the above exact sequence and consequently, an isomorphism 
\[
\beta \colon \Z^n \times N^1(\X) \simeq \Z^{r},
\]
such that $\beta^{-1}(D) = \left(\pr_1(D), q(D)\right)$, where $\pr_1 \colon \Z^r \to \Z^n$ denotes the projection onto the first $n$ components. By \cite[part 2 of Proposition~6.1]{LM} we have the following description of the global Okounkov body $\Delta(\X)$ of Theorem \ref{global_body} in the toric case: the convex set $\Delta(\X)$ is the inverse image under the isomorphism 
\[
\beta_{\R} \colon \R^n \times N^1(\X)_{\R} \simeq \R^{r}
\]
of the non-negative orthant $\R^{r}_+ \subseteq \R^{r}$.

Our goal is to extend Theorem \ref{global_body} to the case of toric big and nef $b$-divisors. Let us start by giving the definition of a \emph{big} toric $b$-divisor.
\begin{Def}\label{bigbdiv}
A toric $b$-divisor $\D = (D_{\Sigma'})_{\Sigma' \in R(\Sigma)}$ is \emph{big} if it has positive volume, i.e.~if 
\[
\vol(\D) \coloneqq \limsup_{\ell \to \infty} \frac{n! h^0(\X, \ell\D)}{\ell^n} >0.
\]
\end{Def}
\begin{rem} It follows from the definitions that a toric $b$-divisor $\D$ is big if and only if its corresponding convex set $\Delta_{\D}$ is full-dimensional. This generalizes the classical theory of toric divisors, where it is known that bigness of a toric divisor $D$ is equivalent to the full-dimensionality of the polyhedron $P_D$ (see the discussion after \cite[Lemma~9.3.0]{CLS}).
\end{rem}
\begin{rem} Let $\D$ be a nef toric $b$-divisor on $\X$. We can ask whether bigness of $\D$ is equivalent to each of the incarnations $D_{\Sigma'}$, for $\Sigma' \in R(\Sigma)$, being big. One implication is clear, since if we start with a big toric $b$-divisor, then if one of the polytopes $P_{D_{\Sigma'}}$ where not to have full-dimensional volume, then since $\Delta_{\D} \subseteq P_{D_{\Sigma'}}$, this would contradict the bigness hypothesis. The other implication however is not true, i.e.~we can produce a nef toric $b$-divisor, all whose incarnations are big, but it itself is not. This follows from the fact that we can produce a sequence of concave virtual support functions compatible under push-forward, with full-dimensional stability sets, converging pointwise to a concave function whose stability set is of strictly smaller dimension. Indeed consider the following sequence of convex sets and the corresponding support functions. This converges to the $1$-dimensional segment $\left[(0,1),(1,0)\right]$ in $\R^2$. 
\begin{center}
\begin{tikzpicture}[scale=0.8]{h}
\filldraw[yellow] (0,0)--(2,0)--(0,2)--cycle;
\filldraw[yellow, opacity = 0.2](4,0)--(4,2)--(6,0)--cycle;
\filldraw[yellow, opacity = 0.2](8,0)--(8,2)--(10,0)--cycle;
\filldraw[yellow, opacity = 0.2] (12,0)--(12,2)--(14,0) --cycle; 
\filldraw[yellow] (5,0) -- (4,2) -- (6,0) -- cycle;
\filldraw[yellow] (26/3,2/3)--(8,2)--(10,0)--cycle;
\filldraw[yellow] (66/5,2/5)--(12,2)--(14,0) --cycle ;
\draw[dashed, opacity = 0.3] (4,2) -- (5,0);
\draw[dashed, opacity = 0.3] (8,2) -- (9,0);
\draw[dashed, opacity = 0.3] (10,0) -- (8,1);
\draw[dashed, opacity = 0.3] (14,0) -- (12,1);
\draw[dashed, opacity = 0.3] (12,2) -- (13, 0);
\draw[dashed, opacity = 0.3] (12,2) -- (13.5, 0);
\draw[->] (2,1)-- (3.5,1);
\draw[->] (6,1)-- (7.5,1);
\draw[->] (10,1) -- (11.5,1);
\draw[->] (14,1) -- (15.5,1); 
\draw (16,1) node{$\cdots$};
\end{tikzpicture}
\end{center}
\end{rem} 

\begin{Def}\label{num_class}
We define the space of numerical classes $N^1(\mathfrak{X}_{\Sigma})$ of the toric Riemann--Zariski space $\mathfrak{X}_{\Sigma}$ as the inverse limit 
\[
N^1(\mathfrak{X}_{\Sigma}) \coloneqq \varprojlim_{\Sigma'\in R(\Sigma)} N^1(X_{\Sigma'})
\]
with maps given by the push-forward map of numerical classes of Cartier divisors.  Moreover, we define the topological space $N^1(\mathfrak{X}_{\Sigma})_{\R}$ by 
\[
N^1(\mathfrak{X}_{\Sigma})_{\R} \coloneqq \varprojlim_{\Sigma'\in R(\Sigma)} N^1(X_{\Sigma'})_{\R}.
\]
\end{Def}
\begin{rem}
Recall that in the case of smooth toric varieties numerical equivalence of divisors agrees with linear equivalence. Hence, the following definition makes sense. 
\end{rem}
\begin{Def}\label{defpicardtoric}
The Picard group $\Pic(\mathfrak{X}_{\Sigma})$ of the toric Riemann--Zariski space $\mathfrak{X}_{\Sigma}$ is defined as the inverse limit 
\[
\Pic(\mathfrak{X}_{\Sigma}) \coloneqq \varprojlim_{\Sigma'\in R(\Sigma)} \Pic(X_{\Sigma'})
\]
with maps given by the push-forward map of linear (= numerical) equivalence classes of Cartier divisors. Moreover, we define the topological space $\Pic(\mathfrak{X}_{\Sigma})_{\R}$ by 
\[
\Pic(\mathfrak{X}_{\Sigma})_{\R}  \coloneqq \varprojlim_{\Sigma'\in R(\Sigma)} \Pic(X_{\Sigma'})_{\R}.
\]
\end{Def}
\begin{rem}
(1) Since in the case of smooth toric varieties numerical equivalence of divisors agrees with linear equivalence, we have that this is also true for the toric Riemann--Zariski space, i.e.~we have that
\[
N^1(\mathfrak{X}_{\Sigma}) = \Pic(\mathfrak{X}_{\Sigma}).
\]
We will write $[\D]$ to denote the class of the toric $b$-divisor $\D$ in $N^1(\mathfrak{X}_{\Sigma})$.

\noindent (2) Note that $N^1(\mathfrak{X}_{\Sigma})_{\R}$ is an inverse limit of finite dimensional vector spaces, but it is itself not finite dimensional. We view it just as a topological vector space (with the topology induced by the inverse limit topology). 
\end{rem}
The following is the analogue of Theorem \ref{global_body} for toric big and nef $b$-divisors. 
\begin{theorem}\label{the:globalconvexset}
There exists a closed convex cone 
\[
\Delta(\mathfrak{X}_{\Sigma}) \subseteq \R^n \times N^1(\mathfrak{X}_{\Sigma})_{\R} 
\]
characterized by the property that in the diagram 
\begin{center}
    \begin{tikzpicture}
      \matrix[dmatrix] (m)
      {
        \Delta(\mathfrak{X}_{\Sigma}) & & \R^n \times N^1(\mathfrak{X}_{\Sigma})_{\R}\\
        & N^1(\mathfrak{X}_{\Sigma})_{\R} & \\
      };
      \draw[->] (m-1-1) to (m-2-2);
      \draw[right hook->] (m-1-1) to (m-1-3);
      \draw[->] (m-1-3) to node[below right]{$\pr_2$} (m-2-2);
     \end{tikzpicture}
   \end{center}
   the fiber of $\Delta(\mathfrak{X}_{\Sigma})\to N^1(\mathfrak{X}_{\Sigma})_{\R}$ over any big and nef class $[\D]$ in $N^1(\mathfrak{X}_{\Sigma})_{\Q}$ is 
   \[
   \phi_{\R}\left(\Delta_{\D}\right) = \phi_{\R}\left(\Delta_{[\D]}\right) = \Delta_{A_{\D}},
   \]
 where $\phi_{\R}\colon M_{\R} \simeq \R^n$ is the identification  of (\ref{ident}), i.e.~we have that
   \[
   \pr_2^{-1}([\D])\cap\Delta(\mathfrak{X}_{\Sigma}) = \phi_{\R}(\Delta_{\D}) \subseteq \R^n \times \{[\D]\} = \R^n.
   \]
   The convex cone $\Delta(\mathfrak{X}_{\Sigma})$ is referred to as the \emph{global Okounkov $b$-body} of $\X$. 
\end{theorem}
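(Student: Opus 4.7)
The plan is to construct $\Delta(\mathfrak{X}_{\Sigma})$ as an inverse limit of the classical global Okounkov cones from Theorem \ref{global_body}, one for each smooth toric refinement in $R(\Sigma)$, and then verify the fiber description for big and nef classes by reducing to Theorem \ref{equal_conv}.

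First, for each $\Sigma' \in R(\Sigma)$, Theorem \ref{global_body} and the explicit toric description recalled above provide the closed convex cone $\Delta(X_{\Sigma'}) = \beta_{\Sigma',\R}^{-1}\bigl(\R_{\geq 0}^{r_{\Sigma'}}\bigr) \subseteq \R^n \times N^1(X_{\Sigma'})_{\R}$, where $r_{\Sigma'} = \#\Sigma'(1)$. For any refinement $\Sigma'' \geq \Sigma'$ in $R(\Sigma)$, the induced toric proper birational morphism $\pi \colon X_{\Sigma''} \to X_{\Sigma'}$ yields a linear pushforward $\pi_* \colon N^1(X_{\Sigma''})_{\R} \to N^1(X_{\Sigma'})_{\R}$. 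Combined with the identity on $\R^n$, this gives a continuous linear map $\rho_{\Sigma'',\Sigma'} \colon \R^n \times N^1(X_{\Sigma''})_{\R} \to \R^n \times N^1(X_{\Sigma'})_{\R}$. I would check that these restrict to maps $\Delta(X_{\Sigma''}) \to \Delta(X_{\Sigma'})$; via the toric identification, this amounts to verifying that the pushforward of an effective torus-invariant divisor is effective (the pushforward simply forgets the coefficients along the rays in $\Sigma''(1) \setminus \Sigma'(1)$).

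I would then define the global Okounkov $b$-body as the inverse limit
\[
\Delta(\mathfrak{X}_{\Sigma}) \coloneqq \varprojlim_{\Sigma' \in R(\Sigma)} \Delta(X_{\Sigma'}) = \bigl\{(v,[\D]) \,:\, (v,[D_{\Sigma'}]) \in \Delta(X_{\Sigma'}) \text{ for every } \Sigma' \in R(\Sigma)\bigr\},
\]
viewed as a subset of $\R^n \times N^1(\mathfrak{X}_{\Sigma})_{\R}$. Being an inverse limit of closed convex cones, $\Delta(\mathfrak{X}_{\Sigma})$ is itself a closed convex cone in the inverse limit topology. For the fiber description, fix a big and nef class $[\D] \in N^1(\mathfrak{X}_{\Sigma})_{\Q}$ represented by a nef toric $b$-divisor $\D$. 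By Definition \ref{nefdef}, $D_{\Sigma'}$ is nef for $\Sigma'$ in a cofinal subset $S \subseteq R(\Sigma)$, so each class $[D_{\Sigma'}]$ is pseudo-effective. After normalizing via linear equivalence so that $D_{\Sigma}|_{U_{\sigma}}$ is trivial (as in Theorem \ref{equal_conv}), the fiber of $\Delta(X_{\Sigma'})$ over $[D_{\Sigma'}]$ is $\phi_{\R}\bigl(P_{D_{\Sigma'}}\bigr)$. Intersecting over $S$ and using Remark \ref{aaa} (nesting of the polytopes $P_{D_{\Sigma'}}$), together with Definition \ref{hjhj} and Theorem \ref{equal_conv}, yields
\[
\pr_2^{-1}([\D]) \cap \Delta(\mathfrak{X}_{\Sigma}) \;=\; \bigcap_{\Sigma' \in S} \phi_{\R}\bigl(P_{D_{\Sigma'}}\bigr) \;=\; \phi_{\R}\bigl(\Delta_{\D}\bigr) \;=\; \Delta_{A_{\D}},
\]
as required. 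Uniqueness of $\Delta(\mathfrak{X}_{\Sigma})$ follows since big and nef rational classes are dense in the big and nef cone of $N^1(\mathfrak{X}_{\Sigma})_{\R}$ and the fiber assignment is continuous.

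The main obstacle is ensuring the coherence of the inverse system: that the maps $\rho_{\Sigma'',\Sigma'}$ restrict to maps $\Delta(X_{\Sigma''}) \to \Delta(X_{\Sigma'})$, and that the normalization by linear equivalence in the identification $\Delta_{A_{\D}} = \phi_{\R}(\Delta_{\D})$ can be made simultaneously compatible across all models $X_{\Sigma'}$. In the toric setting the first is straightforward from the explicit description of $\beta_{\Sigma'}$; the second requires fixing a global choice of representative (for instance, assuming $D_{\Sigma}|_{U_{\sigma}}$ is trivial on the base toric variety $\X$) and tracking that all pullbacks and pushforwards preserve this normalization consistently, which is the most delicate piece of bookkeeping in the proof.
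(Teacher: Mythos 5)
Your proposal matches the paper's proof: both define $\Delta(\mathfrak{X}_{\Sigma}) \coloneqq \varprojlim_{\Sigma'\in R(\Sigma)}\Delta(X_{\Sigma'})$ and identify the fiber over a big and nef class by passing to the inverse limit (equivalently, the nested intersection) of the fiberwise Okounkov bodies $\Delta_{A_{D_{\Sigma'}}} = \phi_{\R}(P_{D_{\Sigma'}})$ and invoking Theorem \ref{equal_conv} together with Definition \ref{hjhj}. Your extra attention to the coherence of the inverse system and to making the normalization $D_{\Sigma}|_{U_\sigma}$ trivial consistently across models is sound bookkeeping that the paper leaves implicit, but it is the same route, not a different one.
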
 
\begin{proof}
We define 
\[
\Delta(\mathfrak{X}_{\Sigma}) \coloneqq \varprojlim_{\Sigma'\in R(\Sigma)}\Delta(X_{\Sigma'}) \subseteq \R^n \times N^1(\mathfrak{X}_{\Sigma})_{\R},
\]
where $\Delta(X_{\Sigma'})$ is the global Okounkov body of the smooth and complete toric variety $X_{\Sigma'}$. This is a well defined closed set in the topological space $\R^n \times N^1(\mathfrak{X}_{\Sigma})_{\R}$. Indeed, the topology of the space $\R^n \times N^1(\mathfrak{X}_{\Sigma})_{\R}$ can be seen as a subspace topology of the product topology given on $\prod_{\Sigma' \in R(\Sigma)}\R^n \times N^1(X_{\Sigma'})_{\R}$. Hence, since all the sets $\Delta(X_{\Sigma'})$ are closed in $\R^n \times N^1(X_{\Sigma'})_{\R}$, their product and hence their inverse limit is a closed set in $\R^n \times N^1(\mathfrak{X}_{\Sigma})_{\R}$. Moreover, let $[\D] \in N^1(\mathfrak{X}_{\Sigma})_{\Q}$ be the class of a big and nef toric $b$-divisor. Since for all $\Sigma' \in R(\Sigma)$, the Okounkov bodies $\Delta_{A_{D_{\Sigma'}}} $ only depend on the class $[D_{\Sigma'}]$ of the toric divisor and since, by nefness, $\Delta_{A_{\D}}$ can be seen as the limit of the $\Delta_{A_{D_{\Sigma'}}}$ as $\Sigma'$ runs through $R(\Sigma)$, we conclude that $\Delta_{A_{\D}}$ also only depends on the class. Whence, we have 
\begin{align*}
\text{pr}_2^{-1}(\D)\cap\Delta\left(\mathfrak{X}_{\Sigma}\right) & =  \text{pr}_2^{-1}\left(\varprojlim_{\Sigma' \in R(\Sigma)}D_{\Sigma'}\right)\cap \varprojlim_{\Sigma'\in R(\Sigma)}\Delta\left(X_{\Sigma'}\right) \\ & = \varprojlim_{\Sigma'\in R(\Sigma)}\left(\text{pr}_2^{-1}\left(D_{\Sigma'}\right) \cap \Delta\left(X_{\Sigma'}\right)\right) \\& = \varprojlim_{\Sigma'\in R(\Sigma)}\Delta_{A_{D_{\Sigma'}}} \times [\D] \; \left(\subseteq \R^n \times\varprojlim_{\Sigma'\in R(\Sigma)}N^1(X_{\Sigma'})_{\R} \right)\\& = \Delta_{A_{\D}} = \phi_{\R}\left(\Delta_{\D}\right),
\end{align*} 
where the third equality follows from Theorem \ref{global_body} and the last one follows from Theorem~\ref{equal_conv}.
\end{proof}
\begin{rem}
By Remark \ref{big_cone} we can describe the image of $\Delta(\mathfrak{X}_{\Sigma})$ in $N^1(\mathfrak{X}_{\Sigma})_{\R}$ under the projection $\text{pr}_2$ as follows: for any $\Sigma' \in R(\Sigma)$, the incarnation in the vector space $N^1(X_{\Sigma'})_{\R}$ of the image of $\Delta(\mathfrak{X}_{\Sigma})$ in $N^1(\mathfrak{X}_{\Sigma})_{\R}$ is the pseudoeffective cone $\overline{\text{Eff}(X_{\Sigma'})}$.
\end{rem}
We denote by $ \BBig\left(\mathfrak{X}_{\Sigma}\right)$ and by $\Nef\left(\mathfrak{X}_{\Sigma}\right)$ the set of classes of big and nef toric $b$-divisors on $\X$, respectively. 
The following corollary is the analogue of \cite[Corollary~4.12]{KK} in the $b$-context. 
\begin{cor}
There is a uniquely defined continuous function 
\[
\deg_{\mathfrak{X}_{\Sigma}} \colon \BBig\left(\mathfrak{X}_{\Sigma}\right) \cap \Nef\left(\mathfrak{X}_{\Sigma}\right) \longrightarrow \R
\]
that computes the degree of any big and nef toric $b$-divisor class. This function is homogeneous of degree $n$ and log-concave, i.e 
\[
\deg_{\mathfrak{X}_{\Sigma}}(\D+\pmb{E})^{1/n} \geq \deg_{\mathfrak{X}_{\Sigma}}(\D)^{1/n} + \deg_{\mathfrak{X}_{\Sigma}}(\pmb{E})^{1/n}
\]
for any $\D, \pmb{E}$ in $\BBig\left(\mathfrak{X}_{\Sigma}\right) \cap \Nef\left(\mathfrak{X}_{\Sigma}\right)$. 
\end{cor}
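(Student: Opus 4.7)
The plan is to define $\deg_{\mathfrak{X}_{\Sigma}}([\D]) \coloneqq \D^n = n!\,\vol(\Delta_{\D})$, which is well defined and finite by Theorem \ref{volume}. To see that this depends only on the class $[\D] \in N^1(\mathfrak{X}_{\Sigma})$, I would invoke Theorem \ref{the:globalconvexset}: the fiber of the global Okounkov $b$-body $\Delta(\mathfrak{X}_{\Sigma})$ over any big and nef class $[\D]$ equals $\phi_{\R}(\Delta_{\D})$, so this convex body depends only on $[\D]$. Therefore so does its volume. Uniqueness is then automatic, since any function on $\BBig(\mathfrak{X}_{\Sigma}) \cap \Nef(\mathfrak{X}_{\Sigma})$ computing the degree of each big and nef class is forced to be the above assignment.

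Homogeneity of degree $n$ is straightforward: the concave function associated to $\lambda \D$ (for $\lambda \in \Q_{\geq 0}$) is $\lambda \phi_{\D}$, whose stability set is $\lambda \Delta_{\phi_{\D}}$, and $\vol(\lambda \Delta_{\phi_{\D}}) = \lambda^n \vol(\Delta_{\phi_{\D}})$. For log-concavity, observe that $\D$ and $\pmb{E}$ big means that $\Delta_{\phi_{\D}}$ and $\Delta_{\phi_{\pmb{E}}}$ are full-dimensional, which is exactly the hypothesis of Corollary \ref{brunn_mink}. One verifies directly from the definitions that the concave function associated to $\D+\pmb{E}$ is $\phi_{\D}+\phi_{\pmb{E}}$, whose stability set is the Minkowski sum $\Delta_{\phi_{\D}}+\Delta_{\phi_{\pmb{E}}}$. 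Applying Corollary \ref{brunn_mink} in this mixed setting then yields the desired inequality
\[
\deg_{\mathfrak{X}_{\Sigma}}(\D+\pmb{E})^{1/n} \geq \deg_{\mathfrak{X}_{\Sigma}}(\D)^{1/n} + \deg_{\mathfrak{X}_{\Sigma}}(\pmb{E})^{1/n}.
\]

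The main technical obstacle will be continuity, since $N^1(\mathfrak{X}_{\Sigma})_{\R}$ is an infinite-dimensional topological vector space carrying the inverse limit topology. My strategy is to reduce to each finite-dimensional factor: a convergent net of classes $[\D_\alpha] \to [\D]$ in $\BBig(\mathfrak{X}_{\Sigma}) \cap \Nef(\mathfrak{X}_{\Sigma})$ projects, for each fixed $\Sigma' \in R(\Sigma)$, to a convergent net $[D_{\alpha,\Sigma'}] \to [D_{\Sigma'}]$ in the finite-dimensional space $N^1(X_{\Sigma'})_{\R}$. On the classical toric side, the polytope $P_{D_{\Sigma'}}$ depends continuously on the class via its vertices $\{m_{\sigma}\}$, so the volumes $\vol(P_{D_{\alpha,\Sigma'}})$ converge to $\vol(P_{D_{\Sigma'}})$. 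Combined with the identification $\vol(\Delta_{\D}) = \lim_{\Sigma' \in R(\Sigma)}\vol(P_{D_{\Sigma'}})$ from Theorem \ref{volume} and the monotonicity of Lemma \ref{mul}, which ensures the convergence is uniform in $\alpha$ once one restricts to nets staying inside a fixed compact region of the big and nef cone, a standard exchange-of-limits argument yields the continuity of $\deg_{\mathfrak{X}_{\Sigma}}$ on this domain.
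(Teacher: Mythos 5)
Your definition of $\deg_{\mathfrak{X}_{\Sigma}}$, the well-definedness via Theorem \ref{the:globalconvexset}, the homogeneity computation, and the deduction of log-concavity from Corollary \ref{brunn_mink} all track the paper's proof (the paper does not spell out homogeneity, but that is a trivial addition). Where you diverge is continuity, and there your argument has a genuine gap. The crucial step you need is that the monotone convergence $\vol(P_{D_{\alpha,\Sigma'}}) \to \vol(\Delta_{\D_\alpha})$ is uniform in $\alpha$, and nothing you cite supplies this. Lemma \ref{mul} is a pointwise monotonicity statement in $\Sigma'$ for a single fixed $b$-divisor; it says nothing about uniformity across a net $\{\D_\alpha\}$. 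Upgrading pointwise monotone convergence to uniform convergence normally requires Dini's theorem, which needs compactness of the parameter domain, and the big and nef cone lives in the infinite-dimensional inverse-limit space $N^1(\mathfrak{X}_{\Sigma})_{\R}$, which is not locally compact, so no neighborhood of a class is contained in a compact set. Without that uniformity the exchange of limits fails: since $\vol(\Delta_{\D}) = \inf_{\Sigma'}\vol(P_{D_{\Sigma'}})$ is an infimum of continuous functions of the class, the naive argument gives you only upper semi-continuity of $\deg_{\mathfrak{X}_{\Sigma}}$, not continuity.

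The paper avoids this entirely by exploiting the log-concavity you have already established: $\deg_{\mathfrak{X}_{\Sigma}}^{1/n}$ is concave and homogeneous of degree $1$ on $\BBig(\mathfrak{X}_{\Sigma}) \cap \Nef(\mathfrak{X}_{\Sigma})$, and one invokes the standard fact that a concave function is continuous on the interior of its domain. You should discard the exchange-of-limits heuristic and close with this convexity argument instead.
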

\begin{proof}
We take 
\[
\deg_{\mathfrak{X}_{\Sigma}} = \D^n = n!\,\text{vol}\left(\Delta_{\D}\right).
\]
Note that  $\vol\left(\Delta_{\D}\right) = \vol\left(\Delta_{A_{\D}}\right) = \vol\left(\Delta\left(\mathfrak{X}_{\Sigma}\right)_{\D}\right)$ corresponds to the volume of the fibre of the projection $\text{pr}_2$. Then, log-concavity follows from the Brunn--Minkowski inequality in Corollary~\ref{brunn_mink}. And since concave functions are continuous in the interior of their domain, the statement of the corollary follows. 
\end{proof}

\printbibliography
\vspace{2cm}
\noindent Ana María Botero\\
Institut für Mathematik\\
Technische Universität Darmstadt\\
Karolinenplatz 5\\
64289 Darmstadt\\
Germany\\
e-mail: \url{botero@mathematik.tu-darmstadt.de}
\end{document}